\documentclass[reqno]{amsart}

\usepackage{allneeded}

\allowdisplaybreaks[3]

\newcommand{\Ham}{\mathrm{Ham}}
\newcommand{\zz}{\bm{z}}
\newcommand{\ZZ}{\bm{Z}}
\newcommand{\HH}{\mathcal{H}}

\newcommand{\K}{\mathds{K}}
\newcommand{\N}{\mathds{N}}
\newcommand{\Z}{\mathds{Z}}
\newcommand{\R}{\mathds{R}}
\newcommand{\C}{\mathds{C}}

\newcommand{\PP}{\mathcal{P}}
\newcommand{\VV}{\mathds{V}}
\newcommand{\EE}{\mathds{E}}
\newcommand{\FF}{\mathds{F}}
\newcommand{\GG}{\mathds{G}}
\newcommand{\II}{\mathds{I}}
\newcommand{\WW}{\mathds{W}}
\newcommand{\fix}{\mathrm{fix}}

\newcommand{\mas}{\mathrm{mas}}
\newcommand{\comas}{\mathrm{comas}}
\newcommand{\ind}{\mathrm{ind}}
\newcommand{\SSS}{\mathrm{S}}
\newcommand{\coSSS}{\mathrm{coS}}
\newcommand{\avind}{\overline{\mathrm{ind}}}
\newcommand{\avcoind}{\overline{\mathrm{coind}}}
\newcommand{\avmas}{\overline{\mathrm{mas}}}
\newcommand{\avcomas}{\overline{\mathrm{comas}}}
\newcommand{\coind}{\mathrm{coind}}
\newcommand{\nul}{\mathrm{nul}}
\newcommand{\Sp}{\mathrm{Sp}}

\newcommand{\Tan}{\mathrm{T}}
\newcommand{\diff}{\mathrm{d}}

\begin{document}

\title{The Morse index of Chaperon's generating families}

\author{Marco Mazzucchelli}
\address{CNRS and \'Ecole Normale Sup\'erieure de Lyon, UMPA\newline\indent  69364 Lyon Cedex 07, France}
\email{marco.mazzucchelli@ens-lyon.fr}

\date{July 29, 2015}

\subjclass[2000]{58E05, 70H05, 34C25}

\thanks{The author is partially supported by the ANR projects WKBHJ (ANR-12-BS01-0020) and COSPIN (ANR-13-JS01-0008-01).}

\begin{abstract}
This is an expository paper devoted to the Morse index of Chaperon's generating families of Hamiltonian diffeomorphisms. After reviewing the construction of such generating families, we present Bott's iteration theory in this setting: we study how the Morse index of a critical point corresponding to an iterated periodic orbit depends on the order of iteration of the orbit. We also investigate the precise dependence of the Morse index from the choice of the generating family associated to a given Hamiltonian diffeomorphism, which will allow to see the Morse index as a Maslov index for the linearized Hamiltonian flow in the symplectic group. We will conclude the survey with a proof that the classical Morse index from Tonelli Lagrangian dynamics coincides with the Maslov index.

\tableofcontents
\end{abstract}

\maketitle

\section{Introduction}

\subsection{Chaperon's generating family}

Generating families are classical objects that describe Hamiltonian diffeomorphisms of symplectic Euclidean spaces\footnote{More generally, generating families describe certain Lagrangian submanifolds, those who are images of the zero-section under a Hamiltonian diffeomorphism, of cotangent bundles. This more general notion originated from the work of H\"ormander \cite{Hormander:Fourier_integral_operators_I}, but was introduced in symplectic topology by Sikorav \cite{Sikorav:Problemes_d_intersections_et_de_points_fixes_en_geometrie_hamiltonienne} and further studied by many other authors.}. Consider a Hamiltonian diffeomorphism $\phi_0$ of the standard symplectic $(\R^{2d},\omega=\diff x\wedge\diff y)$. The graph of $\phi_0$ is a Lagrangian submanifold of the product $\R^{2d}\times\R^{2d}$ equipped with the symplectic form $(-\omega)\oplus\omega$. The graph of the identity diffeomorphism on $\R^{2d}$ is the diagonal subspace of $\R^{2d}\times\R^{2d}$, and the fixed points of $\phi_0$ correspond to the intersection points of its graph with the diagonal. Let us translate this picture on the cotangent bundle $\Tan^*\R^{2d}$, which is equipped with the canonical symplectic structure given by minus the exterior derivative of the Liouville form $\lambda=p\,\diff q$ (here $q$ and $p$ are the variables on the base and on the fiber respectively). We choose a symplectomorphism $(\R^{2d}\times\R^{2d},(-\omega)\oplus\omega)\to(\Tan^*\R^{2d},-\diff\lambda)$ that sends the diagonal subspace to the zero-section. In this survey, we will employ the following one:
\begin{align*}
(x_0,y_0,x_1,y_1) \mapsto (\underbrace{\big.x_1,y_0}_q,\underbrace{\big.y_1-y_0,x_0-x_1}_p).
\end{align*}
The image of the graph of $\phi_0$ under this symplectomorphism is a Lagrangian submanifold $L_0$. Assume now that $L_0$ is a section of the cotangent bundle, that is, the graph of a one-form $\mu_0$ on the base $\R^{2d}$. This is always verified provided $\phi_0$ is sufficiently close to the identity in the $C^1$-topology, or more generally whenever $\phi_0$ admits an associated diffeomorphism $\psi_0:\R^{2d}\to\R^{2d}$ such that $\phi_0(x_0,y_0)=(x_1,y_1)$ if and only if $\psi_0(x_1,y_0)=(x_0,y_1)$. Lagrangian sections of cotangent bundles are precisely the graphs of closed one-forms on the base (we refer the reader to \cite{Hofer_Zehnder:Symplectic_invariants_and_Hamiltonian_dynamics, McDuff_Salamon:Introduction_to_symplectic_topology} for this and other background results from symplectic geometry). Therefore, the one-form $\mu_0$ must be exact, i.e. $\mu_0=\diff f_0$. We say that $f_0:\R^{2d}\to\R$ is a \textbf{generating function} for the Hamiltonian diffeomorphism $\phi_0$. The explicit way $f_0$ determines $\phi_0$ is the following:
\begin{align*}
\phi_0(x_0,y_0)=(x_1,y_1)
\qquad\mbox{if and only if}\qquad
\left\{
  \begin{array}{l}
    x_1-x_0=-\partial_y f_0(x_1,y_0), \\ 
    y_1-y_0=\partial_x f_0(x_1,y_0).
  \end{array}
\right.
\end{align*}
Not only the function $f_0$ describes the Hamiltonian diffeomorphism $\phi_0$, it also provides a variational principle for the fixed points of $\phi_0$: they are precisely the critical points of $f_0$. Notice that the generating function of a Hamiltonian diffeomorphism is unique up to an additive constant.

A general Hamiltonian diffeomorphism $\phi$ of $(\R^{2d},\omega)$ does not necessarily admit a generating function, since its associated Lagrangian submanifold $L\subset\Tan^*\R^{2d}$ may not be a section. However, the following construction originally due to Chaperon~\cite{Chaperon:Une_idee_du_type_geodsiques_brisees_pour_les_systmes_hamiltoniens, Chaperon:An_elementary_proof_of_the_Conley_Zehnder_theorem_in_symplectic_geometry} allows to draw a similar conclusion provided the behavior of $\phi$ at infinity is suitably controlled. For instance, assume that $\phi$ is the time-1 map of a non-autonomous Hamiltonian flow $\phi_H^t$ whose associated Hamiltonian $H_t:\R^{2d}\to\R$ has $C^2$-norm uniformly bounded in $t\in[0,1]$ by a finite constant (this condition can be weakened). By means of this flow, we can factorize $\phi$ as
\begin{align*}
\phi=\phi_{k-1}\circ...\circ\phi_0,
\end{align*}
where each factor is given by $\phi_j:=\phi_H^{(j+1)/k}\circ(\phi_H^{j/k})^{-1}$. As we increase the number $k\in\N$ of factors, each $\phi_j$ becomes closer and closer to the identity in the $C^{1}$ topology. In particular, for $k$ large enough, each factor $\phi_j$ is described by a generating function $f_j:\R^{2d}\to\R$ as explained in the previous paragraph. Chaperon's brilliant idea was to combine these functions together in a suitable way, in order to obtain a function defined on a larger space that defines the original $\phi$. This function $F:\R^{2d}\times\R^{2d(k-1)}\to\R$ has the form
\begin{align}\label{e:Chaperon_generating_family}
F(x_k,y_0,\zz)
=
\sum_{j\in\Z_k}
\Big(\langle y_j,x_{j+1}-x_j\rangle + f_j(x_{j+1},y_j) \Big),
\end{align}
where $\zz=(z_1,...,z_{k-1})$ and $z_j=(x_j,y_j)$. A straightforward computation shows that 
\begin{gather*}
\left\{
  \begin{array}{l}
    \phi_0(x_0,y_0)=z_1, \\ 
    \phi_1(z_1)=z_2, \\ 
    \vdots \\ 
    \phi_{k-2}(z_{k-2})=z_{k-1}, \\ 
    \phi_{k-1}(z_{k-1})=(x_k,y_k),
  \end{array}
\right.\\
\Bigg.\mbox{ if and only if }\\
\left\{
  \begin{array}{l}
    x_k-x_0=-\partial_{y_0}F(x_k,y_0,\zz), \\ 
    y_k-y_0=\partial_{x_k}F(x_k,y_0,\zz),\\
    0=\partial_{\zz}F(x_k,y_0,\zz).
  \end{array}
\right.
\end{gather*}
As before, the function $F$ provides a variational principle for the fixed points of $\phi$: the vector $(x_k,y_0,x_1,y_1,...,x_{k-1},y_{k-1})$ is a critical point of $F$ if and only if $\phi_j(x_j,y_j)=(x_{j+1},y_{j+1})$ for all cyclic indices $j\in\Z_k$. We say that $F$ is a \textbf{generating family} for the Hamiltonian diffeomorphism $\phi$, associated to its factorization $\phi_{k-1}\circ...\circ\phi_0$.  Notice that a generating family becomes a simple generating function if the parameter $k$ is equal to 1. In the following, since we will employ generating families only in order to use their variational principle, we will write $x_0$ for $x_k$ in their expression.

Let us have a closer look at Chaperon's construction in the special case where the Hamiltonian diffeomorphism $\phi$ is linear, that is, when $\phi(z)=Pz$ for some symplectic matrix $P\in\Sp(2d)$. Since the symplectic group $\Sp(2d)$ is connected, we can find a continuous path $\Gamma:[0,1]\to\Sp(2d)$ joining the identity $\Gamma(0)=I$ with $\Gamma(1)=P$. This allows to build a factorization $\phi=\phi_{k-1}\circ...\circ\phi_0$, where each factor is the linear Hamiltonian diffeomorphism $\phi_j(z)=P_jz$ associated to the symplectic matrix 
\begin{align*}
P_j=\Gamma(\tfrac{j+1}{k})\Gamma(\tfrac{j}{k})^{-1}\in\Sp(2d). 
\end{align*}
Since $\phi_j$ is linear, there is a canonical way to normalize its generating function $f_j:\R^{2d}\to\R$ so that it becomes a quadratic function of the form
\begin{align*}
f_j(X_{j+1},Y_j)=
\tfrac12\langle A_j X_{j+1},X_{j+1}\rangle
+
\langle B_j X_{j+1},Y_j\rangle
+
\tfrac12\langle C_j Y_j,Y_j\rangle,
\end{align*}
where $A_j$, $B_j$, and $C_j$ are (small) $dk\times dk$ matrices, $A_j$ and $C_j$ being symmetric. This readily implies that the generating family $F:\R^{2dk}\to\R$ given by the expression~\eqref{e:Chaperon_generating_family} is a quadratic function as well, which we write as
\begin{align*}
F(\ZZ)=\tfrac12\langle H\ZZ,\ZZ\rangle
\end{align*}
for a suitable $2dk\times2dk$ symmetric matrix $H$.

\subsection{Morse indices}
Let $\phi$ be a Hamiltonian diffeomorphism of $\R^{2d}$ described by the generating family $F$ of equation~\eqref{e:Chaperon_generating_family}. Let $z_0$ be a fixed point of $\phi$, so that, if we set $z_j:=\phi_{j-1}(z_{j-1})$ for all $j=1,...,k-1$, we have a corresponding critical point $\zz=(z_0,...,z_{k-1})$ of the generating family $F$. We are interested in the Morse indices of $F$ at $\zz$, which are defined as follows. The \textbf{Morse index} $\ind(\zz)$ is the number of negative eigenvalues of the Hessian of $F$ at $\zz$ counted with multiplicity, that is, the dimension of a maximal subspace of $\R^{2dk}$ where such Hessian is negative definite. Analogously, the \textbf{Morse coindex} $\coind(\zz)$ is the number of positive eigenvalues counted with multiplicity, and finally the \textbf{nullity} $\nul(\zz)$ is the dimension of the kernel of the Hessian of $F$ at $\zz$. Notice that
\begin{align*}
\ind(\zz) + \coind(\zz) + \nul(\zz)=2dkp.
\end{align*}
In order to study these indices, let us first have a look at the Hessian of $F_p$ at $\zz$. We denote by $H(\zz)$ the symmetric $2dk\times 2dk$ matrix such that\begin{align*}
 \mathrm{Hess}F(\zz)[\ZZ,\ZZ']=\langle H(\zz)\ZZ,\ZZ' \rangle,\qquad\forall \ZZ,\ZZ'\in\R^{2dk}.
\end{align*}
Given any vector $\ZZ=(Z_1,...,Z_{k-1})\in\R^{2dk}$, its image $\ZZ':=H(\zz)\ZZ$ is given by
\begin{equation}\label{e:Hessian_at_arbitrary_critical_point}
\begin{split}
X_j' & =Y_{j-1}-Y_{j} + A_{j-1}(\zz) X_j + B_{j-1}(\zz)^T Y_{j-1},\\
Y_j' & =X_{j+1}-X_{j} + B_{j}(\zz) X_{j+1} + C_{j}(\zz) Y_{j}. 
\end{split}
\end{equation}
Here, we have adopted the common notation $Z_j=(X_j,Y_j)$ and $Z_j'=(X_j,Y_j)$. Moreover, as before, the index $j$ must be understood as an element of the cyclic group $\Z_{k}$, and we have set 
\begin{equation}
\begin{split}\label{e:blocks_of_gf_matrix}
A_{j}(\zz):=\partial_{xx}f_{j}(x_{j+1},y_{j}),\\ 
B_{j}(\zz):=\partial_{xy}f_{j}(x_{j+1},y_{j}),\\
C_{j}(\zz):=\partial_{yy}f_{j}(x_{j+1},y_{j}). 
\end{split} 
\end{equation}
From now on, we will  assume that the parameter $k$ is large enough, so that the norms of the matrices $A_j(\zz)$, $B_j(\zz)$, and $C_j(\zz)$ are bounded from above by some $\epsilon<1$.

\begin{rem}\label{r:quadratic_generating_families}
The quadratic function 
\begin{align*}
\tilde f_j(X_{j+1},Y_j)
=
\tfrac12\langle A_{j}(\zz)X_{j+1},X_{j+1}\rangle 
+
\langle B_{j}(\zz)X_{j+1},Y_{j}\rangle 
+
\tfrac12\langle C_{j}(\zz)Y_{j},Y_{j}\rangle
\end{align*}
is the generating function for the linearized map $\diff\phi_j(z_j)$. Therefore, the quadratic function $\tilde F:\R^{2dk}\to\R$ given by 
\begin{align*}
\tilde F(\ZZ)=
\tfrac12\langle H(\zz)\ZZ,\ZZ\rangle
= 
\sum_{j\in\Z_k}
\Big(
\langle Y_j,X_{j+1}-X_j\rangle + \tilde f_j(X_{j+1},Y_j)
\Big).
\end{align*}
is the generating family of the linearized map $\diff\phi(z_0)$ associated to his factorization $\diff\phi_{k-1}(z_{k-1})\circ...\circ\diff\phi_0(z_0)$.
\hfill\qed
\end{rem}

In the context of convex Hamiltonian systems, for instance in the study of closed geodesics in Riemannian manifolds, it is well known that the classical Lagrangian action functional has finite Morse indices (we will discuss this further in Section~\ref{s:Lagrangian}). Even more remarkably, there are closed geodesics that have Morse index zero when they are iterated any number of times, for instance in hyperbolic Riemannian manifolds. On the contrary, the Hamiltonian action functional has always infinite Morse indices at his critical points. Since our generating family $F$ can be considered a finite dimensional approximation of the Hamiltonian action functional, the unboundedness of the Hamiltonian Morse indices is reflected  by the fact that the Morse indices of $F$ tend to be large. For instance, if the Hamiltonian diffeomorphism $\phi$ we started with were the identity, we could choose  $f_0=...=f_{k-1}\equiv0$; the function $F$ would then be a degenerate quadratic form with Morse index and coindex both equal to $d(k-1)$. In general, we have at least the following lower bounds.

\begin{prop}\label{p:Morse_idx_always_large}
For all critical points $\zz$ of $F$, we have 
\[\min\{\ind(\zz),\coind(\zz)\}\geq d\lfloor k/2\rfloor.\]
\end{prop}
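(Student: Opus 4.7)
The plan is to exhibit a $2d\lfloor k/2\rfloor$-dimensional coordinate subspace of $\R^{2dk}$ on which the Hessian bilinear form $\langle H(\zz)\cdot,\cdot\rangle$ is block diagonal with $\lfloor k/2\rfloor$ symmetric $2d\times 2d$ blocks, each of signature $(d,d)$. This will simultaneously produce a negative-definite subspace and a positive-definite subspace of dimension $d\lfloor k/2\rfloor$ each, yielding the desired lower bounds on $\ind(\zz)$ and $\coind(\zz)$.

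Concretely, fix a subset $J\subset\Z_k$ of cardinality $m:=\lfloor k/2\rfloor$ whose elements are pairwise at cyclic distance at least $2$; such a subset exists since the independence number of the cycle graph on $\Z_k$ equals $\lfloor k/2\rfloor$ (for instance, take $J=\{0,2,\ldots,2(m-1)\}$). For each $j\in J$, let $V_j\subset\R^{2dk}$ be the $2d$-dimensional coordinate subspace of vectors $\ZZ$ with $X_\ell=Y_\ell=0$ for all $\ell\neq j$. Plugging such a $\ZZ$ into the formulas~\eqref{e:Hessian_at_arbitrary_critical_point}, one verifies that $H(\zz)\ZZ$ is supported at the three indices $j-1$, $j$, and $j+1$, and that its $j$-th block equals
\[
\begin{pmatrix}X_j'\\ Y_j'\end{pmatrix}=D_j(\zz)\begin{pmatrix}X_j\\ Y_j\end{pmatrix},\qquad D_j(\zz):=\begin{pmatrix}A_{j-1}(\zz) & -I\\ -I & C_j(\zz)\end{pmatrix}.
\]
The restriction of the Hessian to $V_j$ is thus the $2d\times 2d$ quadratic form $\langle D_j(\zz)\cdot,\cdot\rangle$. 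Moreover, for two distinct $j,j'\in J$ the cyclic distance hypothesis gives $j'\notin\{j-1,j,j+1\}$, so $\langle H(\zz)\ZZ,\ZZ'\rangle=0$ whenever $\ZZ\in V_j$ and $\ZZ'\in V_{j'}$. Setting $V:=\bigoplus_{j\in J}V_j$, the restriction $H(\zz)|_V$ is therefore block diagonal with blocks $D_j(\zz)$.

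To conclude, observe that $D_j(\zz)$ is a perturbation of $\bigl(\begin{smallmatrix}0&-I\\-I&0\end{smallmatrix}\bigr)$, whose eigenvalues are $\pm 1$ each with multiplicity $d$, by the block-diagonal matrix $\mathrm{diag}(A_{j-1}(\zz),C_j(\zz))$ of operator norm at most $\epsilon<1$. Weyl's inequality thus forces each $D_j(\zz)$ to have exactly $d$ positive and $d$ negative eigenvalues; summing over $j\in J$, the restriction $H(\zz)|_V$ has signature $(dm,dm)$ with $m=\lfloor k/2\rfloor$. This yields both $\ind(\zz)\geq dm$ and $\coind(\zz)\geq dm$. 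The conceptual point — and the only thing one has to notice — is that choosing $J$ independent in the cyclic graph suppresses every cross-interaction in the Hessian, leaving only the local self-interaction blocks $D_j(\zz)$ whose signature is stable precisely under the hypothesis $\epsilon<1$.
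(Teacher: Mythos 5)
Your proof is correct, and the underlying idea coincides with the paper's: pick an independent set $J\subset\Z_k$ (every pair of chosen indices at cyclic distance at least $2$) so that the Hessian restricted to the corresponding coordinate block splits with no cross-interaction terms, then analyze each local block. The paper uses the odd indices for $J$, and rather than treating the full $2d$-block it picks out the subspaces $\VV$ (with $Y_h=X_h$) and $\WW$ (with $Y_h=-X_h$), which are exactly the $-1$ and $+1$ eigenspaces of your unperturbed matrix $\bigl(\begin{smallmatrix}0&-I\\-I&0\end{smallmatrix}\bigr)$; it then verifies negative (resp.\ positive) definiteness by a direct two-line estimate. Your version keeps the entire $2d\lfloor k/2\rfloor$-dimensional coordinate subspace, notes that the restriction is block diagonal with blocks $D_j(\zz)=\bigl(\begin{smallmatrix}A_{j-1}&-I\\-I&C_j\end{smallmatrix}\bigr)$, and invokes Weyl's eigenvalue perturbation inequality to read off the signature $(d,d)$ of each block; this yields the index and coindex bounds simultaneously from a single subspace, and makes the role of the hypothesis $\epsilon<1$ (the perturbation must be smaller than the spectral gap of $\bigl(\begin{smallmatrix}0&-I\\-I&0\end{smallmatrix}\bigr)$) more transparent. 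The paper's route is more elementary and avoids citing Weyl; yours is more structural and, by allowing any independent set $J$, slightly more flexible. The two proofs are equivalent in strength.
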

\begin{proof}
Consider the vector subspace of $(\R^{2d})^{k}$ given by
\begin{align*}
\VV:=\{ \ZZ=(Z_0,...,Z_{k-1})\in \R^{2dk}\ |\ Z_j=0 \ \forall j\ \mathrm{even},\ \ Y_h=X_h \ \forall h\ \mathrm{odd}\}.
\end{align*}
By~\eqref{e:Hessian_at_arbitrary_critical_point}, for all $\ZZ\in \VV$ we have
\begin{align*}
\langle H(\zz) \ZZ,\ZZ\rangle 
&=
\sum_{j\ \mathrm{odd}}
\Big(
-|X_j|^2 - |Y_j|^2
+ \langle A_{j-1} X_j,X_j \rangle 
+ \langle C_{j} Y_{j} ,Y_j \rangle 
\Big)\\
&\leq
\sum_{j\ \mathrm{odd}}
(\epsilon-1)(|X_j|^2 + |Y_j|^2)
\\
&=\underbrace{(\epsilon-1)}_{<0}|\ZZ|^2.
\end{align*}
This shows that the Hessian of $F$ at $\zz$ is negative definite on $\VV$, and in particular $\ind(\zz)\geq \dim \VV= d\lfloor k/2\rfloor$. By an analogous computation, the Hessian of $F$ at $\zz$ is positive definite on
\begin{align*}
\WW  :=\{ \ZZ\in \R^{2dk}\ |\ Z_j=0 \ \forall j\ \mathrm{even},\ \ Y_h=-X_h \ \forall h\ \mathrm{odd}\},
\end{align*}
which implies $\coind(\zz)\geq \dim \WW = d\lfloor k/2\rfloor$.
\end{proof}

Studying the properties of the Morse indices of generating families is tremendously important for the applications to the existence and multiplicity of periodic orbits of Hamiltonian systems. Indeed, minimax methods from non-linear analysis allow to find critical points of a generating family with almost prescribed indices. More precisely, a minimax scheme of dimension $n$, such as a minimax over the family of representative of an homology or homotopy class of degree $n$, may only converge to critical points with Morse index less than or equal to $n$ and Morse index plus nullity larger than or equal to $n$. Suppose that we are interested in the periodic points of a Hamiltonian diffeomorphism $\phi$ described by a generating family $F$. The factorization $\phi=\phi_{k-1}\circ...\circ\phi_0$ employed to build $F$ can be iterated $p$ times in order to build a generating family $F_p$ for the iterated Hamiltonian diffeomorphism $\phi^p$. A fixed point $z_0$ of $\phi$ gives a critical point $\zz=(z_0,...,z_{k-1})$ of the generating family $F$, and its $p$-th fold juxtaposition $\zz^p=(\zz,...,\zz)$ gives a critical point of the generating family $F_p$. 

Now, assume that one can setup a minimax scheme with every function $F_p$ that produces a critical point $\zz_p$ with Morse index $i_p=\ind(\zz_p)$, coindex $c_p=\coind(\zz_p)$, and nullity $n_p=\nul(\zz_p)$. The natural question to ask is whether the family of critical points $\{\zz_p\ |\ p\in\N\}$ corresponds to infinitely many distinct periodic points of $\phi$. As we just saw, the answer in general is no: in the worst case, all the critical points $\zz_p$ may be of the form $\zz^p=(\zz,...,\zz)$ and thus correspond to the same fixed point $z_0$. One way to address this question is to study the admissible behavior of the function  $p\mapsto (\ind(\zz^p),\coind(\zz^p),\nul(\zz^p))$ that associate to a period $p$ the indices of the critical points of $F_p$ corresponding to a fixed point $z_0$ of $\phi$. In the more special setting of Tonelli Lagrangian systems (c.f.\ Section~\ref{s:Lagrangian}), this idea goes back to the work of Hedlund \cite{Hedlund:Poincare_s_rotation_number_and_Morse_s_type_number} and Morse-Pitcher \cite{Morse_Pitcher:On_certain_invariants_of_closed_extremals} from the 1930s,  and  was greatly developed two decades later by Bott in his celebrated paper~\cite{Bott:On_the_iteration_of_closed_geodesics_and_the_Sturm_intersection_theory}.
 If  the sequence of indices $\{(i_p,c_p,n_p)\ |\ p\in\N\}$ provided by the minimax schemes does not have an admissible behavior, one can immediately conclude that the family of critical points $\{\zz_p\ |\ p\in\N\}$ cannot correspond to a single fixed point of $\phi$. Sometimes, this argument can be pushed further to show that such family of critical points cannot correspond to a finite set of periodic points of $\phi$, and thus infer that $\phi$ possesses infinitely many periodic points.

\subsection{Organization of the paper}

In Section~\ref{s:Bott} we will present the aforementioned Bott's iteration theory in the general setting of generating families. We will not provide applications of this theory, but we will mention some of them in the last Subsection~\ref{ss:Bott_biblio_remarks}. In Section~\ref{s:Maslov_index} we will discuss the dependence of the Morse index from the specific choice of the generating family. We will show that the Morse index can be seen as a Maslov index, a certain homotopy invariant for continuous paths in the symplectic group. In Section~\ref{s:Lagrangian} we will consider the special case of Hamiltonian diffeomorphisms generated by a non-autonomous Tonelli Hamiltonian. We will show that, in this case, the Morse indices of the generating family (or the Maslov indices of the associated symplectic paths) are related to the Morse indices of the classical Lagrangian action functional. As the reader will see, throughout the sections we will often be dealing with quadratic forms, which inevitably involves some linear algebra. In the Appendix of the paper we have collected the less standard tools from plain and symplectic linear algebra that we will need. None of the results contained in this survey is original, although some of the proofs are different form the ones available in the literature. Many authors contributed to the theory presented, and  it seems almost impossible to provide a complete and precise historical account. We will give the main references to the vast bibliography at the end of each section.

\section{Bott's iteration theory for generating families}\label{s:Bott}

\subsection{Bott indices}\label{s:Bott_indices}

Consider a Hamiltonian diffeomorphism $\phi\in\Ham(\R^{2d})$. Assume that the behavior of $\phi$ at infinity is suitably controlled, so that we have a factorization $\phi=\phi_{k-1}\circ...\circ\phi_0$ where each $\phi_j\in\Ham(\R^{2d})$ is defined by a generating function $f_j:\R^{2d}\to\R$. For each period $p\in\N$, the iterated diffeomorphism $\phi^p$ is defined by the generating family $F_p:\R^{2dkp}\to\R$ given by
\begin{align*}
F_p(z_0,...,z_{kp-1})
=
\sum_{j\in\Z_{kp}}
\Big(
\langle y_j,x_{j+1}-x_j\rangle
+
f_{j\,\mathrm{mod}\,k}(x_{j+1},y_j)
\Big),
\end{align*}
where as usual we have adopted the notation $z_j=(x_j,y_j)\in\R^{2d}$. Consider  a fixed point $z_0$ of $\phi$, with associated critical point $\zz=(z_0,...,z_{k-1})$ of $F_1$. For all periods $p\in\N$, the critical point of $F_p$ corresponding to the $p$-periodic orbit of $\phi$ starting at $z_0$ is given by $\zz^p=(\zz,...,\zz)$. Let $H_p=H_p(\zz^p)$ be the $2dkp\times2dkp$ symmetric matrix associated to the Hessian of $F_p$ at $\zz^p$, i.e.
\begin{align*}
 \mathrm{Hess}F_p(\zz^p)[\ZZ',\ZZ'']=\langle H_p \ZZ',\ZZ'' \rangle.
\end{align*}
Due to the special form of our critical point $\zz^p$, an image $\ZZ':=H_p\ZZ$ is defined by
\begin{equation}\label{e:Hessian}
\begin{split}
X_j' & =Y_{j-1}-Y_{j} + A_{j-1\,\mathrm{mod}\,k} X_j + B_{j-1\,\mathrm{mod}\,k}^T Y_{j-1},\\
Y_j' & =X_{j+1}-X_{j} + B_{j\,\mathrm{mod}\,k} X_{j+1} + C_{j\,\mathrm{mod}\,k} Y_{j},
\end{split}
\end{equation}
where the subscript $j$ belongs to $\Z_{kp}$, and the matrices $A_j=A_j(\zz)$, $B_j=B_j(\zz)$, and $C_j=C_j(\zz)$ are defined as before in~\eqref{e:blocks_of_gf_matrix}.

We wish to investigate the behavior of the Morse indices under iteration, that is, the behavior of the functions $p\mapsto\ind(\zz^p)$, $p\mapsto\coind(\zz^p)$, and $p\mapsto\nul(\zz^p)$. For this purpose, let us interpret $H_p$ in an equivalent, but conceptually slightly different, way: we see it as a second order difference operator $\HH_p$ acting on the vector space of $kp$-periodic sequences
\begin{align*}
\VV_{p}:=\big\{ (Z_j)_{j\in\Z}\in(\R^{2d})^\Z \ |\ Z_{j+kp}=Z_j\quad \forall j\in\Z \big\}.
\end{align*}
Following Bott \cite{Bott:On_the_iteration_of_closed_geodesics_and_the_Sturm_intersection_theory}, let us complexify the setting by introducing, for every $\theta$ in the unit circle $S^1\subset\C$, the vector space of sequences
\begin{align*}
\VV_{p,\theta}:=\big\{ (Z_j)_{j\in\Z}\in(\C^{2d})^\Z \ |\ Z_{j+kp}=\theta Z_j\quad \forall j\in\Z \big\}.
\end{align*}
We equip this vector space with the Hermitian product
\begin{align*}
\langle  (Z_j)_{j\in\Z},(Z_j')_{j\in\Z}\rangle_{p,\theta} = \sum_{j=0}^{kp-1} \langle  Z_j,Z_j'\rangle=\sum_{j=0}^{kp-1} Z_j \overline{Z_j'}.
\end{align*}
We introduce the linear operator $\HH_{p,\theta}:\VV_{p,\theta}\to\VV_{p,\theta}$ given by   $\HH_{p,\theta} (Z_j)_{j\in\Z}=(Z_j')_{j\in\Z}$. Here, we have denoted $Z_j=(X_j,Y_j)$, and defined $Z_j'=(X_j',Y_j')$  by the equations~\eqref{e:Hessian}, where the subscript $j$ is now in $\Z$. The operator $\HH_{p,\theta}$ is Hermitian with respect to the product $\langle  \cdot,\cdot\rangle_{p,\theta}$, and in particular it has real spectrum. Indeed, the vector space $\VV_{p,\theta}$ is isomorphic to $(\C^{2d})^{kp}$ via the map \[(Z_j)_{j\in\Z}\mapsto(Z_0,...,Z_{kp-1}),\] which pulls back the standard Hermitian product on $\C^{2dkp}$ to $\langle  \cdot,\cdot\rangle_{p,\theta}$. By means of this isomorphism, we can see $\HH_{p,\theta}$ as the complex linear endomorphism $H_{p,\theta}$ of  $\C^{2dkp}$ given by $H_{p,\theta}\ZZ=\ZZ'$, where $X_1',...,X_{kp-1}',Y_0',...,Y_{p-2}'$ are defined as in~\eqref{e:Hessian}, while
\begin{align*}
X_0' & =\overline\theta\, Y_{kp-1} - Y_{0} + A_{k-1} X_0 + \overline\theta\, B_{k-1}^T Y_{kp-1},\\
Y_{kp-1}' & = \theta\, X_{0} - X_{kp-1} + \theta\,B_{k-1} X_{0} + C_{k-1} Y_{kp-1}. 
\end{align*}
The difference with respect to~\eqref{e:Hessian} is that there are some coefficients $\theta$ or $\overline\theta$ appearing, according to the fact that the sequences in $\VV_{p,\theta}$ are $kp$-periodic only after ``twisting'' them by $\theta$. If we see $H_{p,\theta}$ as a $2dkp\times 2dkp$ complex matrix, the above expressions readily imply that $H_{p,\theta}^*=H_{p,\theta}$. 

In the following, we will refer to $H_{p,\theta}$ as to the \textbf{$\theta$-Hessian} of $F_p$ at $\zz^p$. We generalize the Morse indices and the nullity by introducing the following \textbf{Bott indices}
\begin{align*}
\ind_{\theta}(\zz^p)&=\sum_{\lambda<0} \dim_\C\ker (\HH_{p,\theta}-\lambda I)
=\sum_{\lambda<0} \dim_\C\ker (H_{p,\theta}-\lambda I),\\
\coind_{\theta}(\zz^p)&=\sum_{\lambda > 0} \dim_\C\ker (\HH_{p,\theta}-\lambda I)
=\sum_{\lambda > 0} \dim_\C\ker (H_{p,\theta}-\lambda I),\\
\nul_{\theta}(\zz^p)&= \dim_\C\ker \HH_{p,\theta}
=\dim_\C\ker H_{p,\theta}.
\end{align*}
The usual Morse indices correspond to the case where $\theta=1$, that is, 
\begin{align*}
\ind(\zz^p)&=\ind_1(\zz^p),\\
\coind(\zz^p)&=\coind_1(\zz^p),\\
\nul(\zz^p)&=\nul_{1}(\zz^p).
\end{align*} 
The first elementary properties of the Bott indices are the following.
\begin{lem}\label{l:properties_ind_theta}
$ $
\begin{itemize}
\item[(i)] The functions $\theta\mapsto\ind_{\theta}(\zz^p)$, $\theta\mapsto\coind_{\theta}(\zz^p)$ and $\theta\mapsto\nul_{\theta}(\zz^p)$ are invariant by complex conjugation.

\item[(ii)] $\nul_\theta(\zz^p)=\dim_\C\ker (\diff\phi^p(z_0)-\theta I)$.

\item[(iii)] The functions $\theta\mapsto\ind_\theta(\zz^p)$ and $\theta\mapsto\coind_\theta(\zz^p)$ are locally constant on $S^1\setminus\sigma(\diff\phi^p(z_0))$, the complement of the set of eigenvalues of $\diff\phi^p(z_0)$ on the unit circle. Given an open interval $U\subset S^1$ such that the intersection $U\cap\sigma(\diff\phi^p(z_0))$ contains only one point $\theta$, for all $\theta'\in U\setminus\{\theta\}$ we have
\begin{align*}
\underbrace{\ind_{\theta'}(\zz^p) - \ind_\theta(\zz^p)}_{\geq0}
+ \underbrace{\coind_{\theta'}(\zz^p) - \coind_\theta(\zz^p)}_{\geq0}
=
\nul_\theta(\zz^p).
\end{align*}
\end{itemize}
\end{lem}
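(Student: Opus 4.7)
The plan is to dispatch the three items of Lemma \ref{l:properties_ind_theta} in turn, each resting on the explicit form of the Hermitian matrix $H_{p,\theta}$, whose building blocks $A_j(\zz), B_j(\zz), C_j(\zz)$ are real by \eqref{e:blocks_of_gf_matrix}. For (i), the only non-real scalars occurring in the defining equations of $H_{p,\theta}$ are the factors $\theta$ and $\overline\theta$ in the formulas for $X_0'$ and $Y_{kp-1}'$; entrywise conjugation therefore sends $H_{p,\theta}$ to $H_{p,\overline\theta}$. Since both matrices are Hermitian with real spectrum, they share the same eigenvalues with the same multiplicities, and (i) follows at once.

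For (ii), I would rewrite the kernel equations $X_j'=Y_j'=0$ of \eqref{e:Hessian} as a first-order recursion. After reorganizing they read
\begin{align*}
X_{j+1}-X_j &= -B_{j\,\mathrm{mod}\,k} X_{j+1} - C_{j\,\mathrm{mod}\,k} Y_j,\\
Y_{j+1}-Y_j &= A_{j\,\mathrm{mod}\,k} X_{j+1} + B_{j\,\mathrm{mod}\,k}^T Y_j,
\end{align*}
which, by Remark \ref{r:quadratic_generating_families}, is exactly the statement $Z_{j+1}=\diff\phi_{j\,\mathrm{mod}\,k}(z_{j\,\mathrm{mod}\,k})\,Z_j$. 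Consequently any element of $\ker\HH_{p,\theta}$ is determined by its initial value $Z_0$, and the $\theta$-twisted periodicity $Z_{kp}=\theta Z_0$ becomes the eigenvalue equation $\diff\phi^p(z_0)Z_0=\theta Z_0$. The evaluation $(Z_j)_{j\in\Z}\mapsto Z_0$ thus identifies $\ker\HH_{p,\theta}$ with $\ker(\diff\phi^p(z_0)-\theta I)$, proving (ii).

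For (iii), I would use that $\theta\mapsto H_{p,\theta}$ is smooth, so the eigenvalues of this Hermitian family, listed with multiplicity, are continuous functions of $\theta$. The counts $\ind_\theta(\zz^p)$ and $\coind_\theta(\zz^p)$ can therefore only change at parameters where some eigenvalue crosses $0$, and by (ii) these are exactly the points $\theta\in\sigma(\diff\phi^p(z_0))$, giving local constancy on $S^1\setminus\sigma(\diff\phi^p(z_0))$. Given $U$ as in the statement, the eigenvalues of $H_{p,\theta}$ that are strictly negative (resp.\ positive) at $\theta$ stay on the same side of $0$ throughout $U$, since they do not vanish at $\theta$ and cannot vanish at any $\theta'\in U\setminus\{\theta\}$ either (as $\nul_{\theta'}(\zz^p)=0$ there). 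Meanwhile, each of the $\nul_\theta(\zz^p)$ eigenvalues that vanish at $\theta$ becomes nonzero at any such $\theta'$, thereby contributing $+1$ to either $\ind_{\theta'}-\ind_\theta$ or $\coind_{\theta'}-\coind_\theta$; summing yields the claimed identity.

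The conceptual heart of the argument is (ii): one must recognize that the kernel equation for the $\theta$-Hessian is precisely the $\theta$-twisted discrete linearized Hamiltonian dynamics generated by the quadratic approximations of the $f_j$. Once this identification is in hand, (i) reduces to the reality of $A_j,B_j,C_j$ and (iii) to standard continuous dependence of Hermitian spectra on a parameter.
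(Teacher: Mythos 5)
Your proof is correct and follows essentially the same route as the paper's: conjugation of $H_{p,\theta}$ for (i), identification of $\ker\HH_{p,\theta}$ with $\ker(\diff\phi^p(z_0)-\theta I)$ via the discrete linearized dynamics and the twisted periodicity condition for (ii), and continuity of the ordered eigenvalue family for (iii). The only stylistic difference is that for (ii) you work directly with the sequence operator $\HH_{p,\theta}$ on $\VV_{p,\theta}$, whereas the paper writes out the matrix $H_{p,\theta}$ and handles the two boundary equations (with the $\theta$, $\overline\theta$ coefficients) separately before making the same identification; your phrasing is marginally cleaner but the substance is identical.
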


\begin{proof}
Point (i) is an immediate consequence of the fact that $\overline {H_{p,\theta}}=H_{p,\overline\theta}$. 

As for point~(ii), notice that a vector $\ZZ=(Z_0,...,Z_{kp-1})$ belongs to the kernel of $H_{p,\theta}$ if and only if it satisfies, for all $j=0,...,kp-2$,
\begin{align*}
X_{j+1} - X_{j} &= -B_{j\,\mathrm{mod}\,k} X_{j+1} - C_{j\,\mathrm{mod}\,k} Y_{j},\\ 
Y_{j+1} - Y_{j} &=  A_{j\,\mathrm{mod}\,k} X_{j+1} + B_{j\,\mathrm{mod}\,k}^T Y_{j},
\end{align*}
and
\begin{align*}
\theta\, X_{0} & =X_{kp-1} - B_{k-1} \theta\,X_{0} - C_{k-1} Y_{kp-1},\\
\theta\,Y_{0} &=  Y_{kp-1} + A_{k-1} \theta\,X_0 + B_{k-1}^T Y_{pk-1}.
\end{align*}
We already saw in Remark~\ref{r:quadratic_generating_families} that the quadratic function $\tilde f_j$ is the generating function for the linearized map $\diff\phi_j(z_j)$. Therefore, we can rephrase the above conditions by saying that a vector $\ZZ=(Z_0,...,Z_{kp-1})$ belongs to the kernel of $H_{p,\theta}$ if and only if $\diff\phi_{j\,\mathrm{mod}\,k}(z_{j\,\mathrm{mod}\,k})Z_j=Z_{j+1}$ for all $j=0,...,kp-2$ and $\diff\phi_{k-1}(z_{k-1})Z_{kp-1}=\theta\,Z_{0}$. The projection $\ZZ\mapsto Z_0$ is thus a diffeomorphism between the kernel of $H_{p,\theta}$ and the kernel of $\diff\phi^p(z_0)-\theta I$.

Point~(iii) is a consequence of the continuity of the function that associates to a matrix his set of eigenvalues. Let us explain this in detail. First of all, since the matrix $H_{p,\theta}$ is Hermitian, it is diagonalizable. In particular $\dim_\C\ker (H_{p,\theta}-\lambda I)$ is equal to the algebraic multiplicity of $\lambda$ as an eigenvalue of $H_{p,\theta}$ (which is understood to be zero if $\lambda$ is not an eigenvalue). Fix an arbitrary $\theta\in S^1$. For an open interval $U\subset S^1$ containing $\theta$, there exist a continuous function 
\begin{align*}
\bm\lambda=(\lambda_1,\lambda_2,...,\lambda_{2dkp}):U\to\R^{2dkp}
\end{align*}
such that, for all $\theta'\in U$, the numbers $\lambda_1(\theta'),\lambda_2(\theta'),...,\lambda_{2dkp}(\theta')$ are the eigenvalues of $H_{p,\theta'}$ repetead according to their algebraic multiplicity. In particular, we have
\begin{align*}
\ind_{\theta'}(\zz^p)&=\#\{j \ |\ \lambda_j(\theta')<0\},\\
\coind_{\theta'}(\zz^p)&=\#\{j \ |\ \lambda_j(\theta')>0\},\\
\nul_{\theta'}(\zz^p)&=\#\{j \ |\ \lambda_j(\theta')=0\}.
\end{align*}
This immediately implies that, if $\nul_{\theta}(\zz^p)=0$, the function $\theta'\mapsto \ind_{\theta'}(\zz^p)$ is constant in a neighborhood of $\theta$. 
Assume now that $\nul_\theta(\zz^p)>0$, and shrink $U$ around $\theta$ so that it does not contains other eigenvalues of $\diff\phi^p(z_0)$. In particular, the sign of each function $\lambda_j$ is locally constant on $U\setminus\{\theta\}$. Therefore, the difference $\ind_{\theta'}(\zz^p)-\ind_{\theta}(\zz^p)$ is precisely the number of subscripts $j$ such that $\lambda_j(\theta')<0$ and $\lambda_j(\theta)=0$. Analogously,  $\coind_{\theta'}(\zz^p)-\coind_{\theta}(\zz^p)$ is the number of subscripts $j$ such that $\lambda_j(\theta')>0$ and $\lambda_j(\theta)=0$. Finally, $\nul_{\theta'}(\zz^p)=0$ for all $\theta'\in U\setminus\{\theta\}$. This proves  point~(iii).
\end{proof}

As we mentioned earlier, the reason for introducing the Bott indices is that the function $\theta\mapsto\ind_{\theta}(\zz)$ alone determines the iterated index $\ind(\zz^p)$ for all periods $p\in\N$, and the same property holds for the coindices and the nullities. The precise way this works is explained by the following lemma.

\begin{lem}[Bott's formulae]\label{l:Bott_formulae}
For all $p\in\N$ and $\theta\in S^1$, we have
\begin{align*}
\nul_\theta(\zz^p)&=\sum_{\mu\in\sqrt[p]{\theta}}\nul_\mu(\zz),\\
\ind_\theta(\zz^p)&=\sum_{\mu\in\sqrt[p]{\theta}}\ind_\mu(\zz),\\
\coind_\theta(\zz^p)&=\sum_{\mu\in\sqrt[p]{\theta}}\coind_\mu(\zz).
\end{align*}
\end{lem}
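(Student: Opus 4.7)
The plan is to exhibit $\HH_{p,\theta}$ as an orthogonal direct sum of the operators $\HH_{1,\mu}$ with $\mu$ ranging over the $p$-th roots of $\theta$; all three formulae then follow by partitioning spectra.

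First I would observe that if $\mu^p=\theta$, then any $(Z_j)\in\VV_{1,\mu}$ satisfies $Z_{j+kp}=\mu^p Z_j=\theta Z_j$, so there is a natural inclusion $\VV_{1,\mu}\hookrightarrow\VV_{p,\theta}$. To decompose $\VV_{p,\theta}$ I would define, for each $\mu\in\sqrt[p]\theta$, the projector $P_\mu:\VV_{p,\theta}\to\VV_{p,\theta}$ by
\begin{align*}
(P_\mu Z)_j := \tfrac{1}{p}\sum_{\ell=0}^{p-1}\mu^{-\ell}Z_{j+k\ell}.
\end{align*}
A direct shift of indices, using $Z_{j+kp}=\theta Z_j$ together with $\mu^{-p}=\theta^{-1}$, gives $(P_\mu Z)_{j+k}=\mu (P_\mu Z)_j$, so the image of $P_\mu$ lies in $\VV_{1,\mu}$. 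The identity $\sum_{\mu\in\sqrt[p]\theta}\mu^{-\ell}=p$ when $\ell\equiv 0\pmod p$ and $0$ otherwise, valid on the range $\ell\in\{0,\dots,p-1\}$, yields $\sum_{\mu}P_\mu=\id_{\VV_{p,\theta}}$.

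Next I would verify orthogonality of the decomposition. For $Z\in\VV_{1,\mu}$ and $Z'\in\VV_{1,\mu'}$ with $\mu\neq\mu'$, writing the sum of $kp$ terms as a double sum over $\ell\in\{0,\dots,p-1\}$ and $j\in\{0,\dots,k-1\}$ and using the twist conditions $Z_{j+k\ell}=\mu^\ell Z_j$ and $Z'_{j+k\ell}=(\mu')^\ell Z'_j$ together with $|\mu'|=1$, one gets
\begin{align*}
\langle Z,Z'\rangle_{p,\theta}
=\Bigl(\sum_{\ell=0}^{p-1}(\mu/\mu')^{\ell}\Bigr)\sum_{j=0}^{k-1}Z_j\overline{Z'_j}=0,
\end{align*}
since $\mu/\mu'$ is a nontrivial $p$-th root of unity. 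Combined with $\sum_\mu P_\mu=\id$, this establishes the orthogonal decomposition $\VV_{p,\theta}=\bigoplus_{\mu\in\sqrt[p]\theta}\VV_{1,\mu}$.

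To finish, I would note that the operator $\HH_{p,\theta}$ is defined by the \emph{local} second-order difference formulae~\eqref{e:Hessian}, whose coefficients $A_{j\bmod k}$, $B_{j\bmod k}$, $C_{j\bmod k}$ depend on $j$ only modulo $k$. Hence $\HH_{p,\theta}$ sends any $(Z_j)\in\VV_{1,\mu}$ to another sequence in $\VV_{1,\mu}$, and its restriction to this subspace coincides exactly with $\HH_{1,\mu}$. Consequently $\HH_{p,\theta}=\bigoplus_{\mu\in\sqrt[p]\theta}\HH_{1,\mu}$ as Hermitian operators, so the spectrum of $\HH_{p,\theta}$ (counted with multiplicity) is the disjoint union of the spectra of the $\HH_{1,\mu}$'s; summing negative, positive, and zero eigenvalues over $\mu$ yields the three Bott identities. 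The only slightly delicate step is the construction of the projectors $P_\mu$ and the verification that their image lands in the correct twisted subspace — everything else is a direct computation.
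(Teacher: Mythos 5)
Your proposal is correct and follows essentially the same route as the paper: identify each $\VV_{1,\mu}$ as a subspace of $\VV_{p,\theta}$, establish the $\langle\cdot,\cdot\rangle_{p,\theta}$-orthogonal splitting $\VV_{p,\theta}=\bigoplus_{\mu\in\sqrt[p]\theta}\VV_{1,\mu}$, and observe that $\HH_{p,\theta}$ restricts to $\HH_{1,\mu}$ on each summand because its difference-operator coefficients depend on $j$ only modulo $k$, so that the spectra add up. Your explicit projectors $P_\mu$ (averaging over shifts by multiples of $k$, weighted by $\mu^{-\ell}$) and the check $\sum_\mu P_\mu=\id$ are a slightly more transparent way of producing the same Fourier-type decomposition the paper writes down directly.
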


\begin{proof}
The first equality follows from a general property of matrices. Indeed, by Lemma~\ref{l:properties_ind_theta}(ii), such an equality can be rewritten as
\[
\dim_\C\ker(\diff\phi^p(z_0)-\theta I) = \sum_{\mu\in\sqrt[p]\theta} \dim_\C\ker(\diff\phi(z_0)-\mu I),
\]
which follows from Proposition~\ref{p:power_matrix}.

Now, we are going to provide an argument that proves the three equalities of the lemma at once. Indeed, we will show that
\begin{align}\label{e:general_Fourier_decomposition}
\dim_\C\ker (\HH_{p,\theta}-\lambda I) = \sum_{\mu\in\sqrt[p]{\theta}} \dim_\C\ker (\HH_{1,\mu}-\lambda I),\qquad\forall\lambda\in\R.
\end{align}
For this, we need an ingredient from elementary Fourier analysis. Notice first that $\VV_{1,\mu}$ is a vector subspace of $\VV_{p,\theta}$ whenever $\mu^p=\theta$. Any sequence of complex vectors $\ZZ=(Z_j)_{j\in\Z}\in \VV_{p,\theta}$ can be  decomposed as
\begin{align}\label{e:Fourier_decomposition}
\ZZ=\sum_{\mu\in\sqrt[p]\theta} \ZZ_\mu,
\end{align}
where $\ZZ_\mu=(Z_{\mu,j})_{j\in\Z}\in \VV_{1,\mu}$ is given by
\begin{align*}
Z_{\mu,j}:=\frac1{kp} \sum_{h=0}^{kp-1} \mu^{1-h} Z_{h+j}.
\end{align*}
Given two distinct roots $\mu,\sigma\in\sqrt[p]\theta$, the corresponding vector spaces $\VV_{1,\mu}$ and $\VV_{1,\sigma}$ are orthogonal with respect to the Hermitian product $\langle\cdot,\cdot\rangle_{p,\theta}$. Indeed, if $\ZZ'\in \VV_{1,\mu}$ and $\ZZ''\in \VV_{1,\sigma}$, we have
\begin{align*}
\langle\ZZ',\ZZ''\rangle_{p,\theta}
=
\sum_{j=0}^{kp-1}
Z'_j \overline{Z''_j}
=
\sum_{j=0}^{k-1}
Z'_j \overline{Z''_j}
\underbrace{\sum_{h=0}^{p-1}(\mu \overline\sigma)^h}_{=0}=0.
\end{align*}
This readily implies that the decomposition~\eqref{e:Fourier_decomposition} is unique, and defines a $\langle\cdot,\cdot\rangle_{p,\theta}$-orthogonal splitting
\begin{align*}
\VV_{p,\theta}=\bigoplus_{\mu\in\sqrt[p]\theta} \VV_{1,\mu}.
\end{align*}
Actually, this splitting turns out to be orthogonal also with respect to the Hermitian form $\langle \HH_{p,\theta}\cdot,\cdot\rangle_{p,\theta}$. Indeed, $\HH_{p,\theta}|_{\VV_{1,\mu}}=\HH_{1,\mu}$ and, if $\ZZ'$ and $\ZZ''$ are as above, we have
\begin{align*}
\langle \HH_{p,\theta}\ZZ',\ZZ''\rangle_{p,\theta}
=
\langle \underbrace{\big.\HH_{1,\mu}\ZZ'}_{\in\VV_{1,\mu}},\ZZ''\rangle_{p,\theta}
=
0. 
\end{align*}
In particular, the $\lambda$-eigenspace of $\HH_{p,\theta}$ is the direct sum of the $\lambda$-eigenspaces of the operators $\HH_{1,\mu}$, for all $\mu\in\sqrt[p]\theta$, and equation~\eqref{e:general_Fourier_decomposition} follows.
\end{proof}

Lemmata~\ref{l:properties_ind_theta} and~\ref{l:Bott_formulae} give a clear picture of the qualitative behavior of the functions $p\mapsto\ind(\zz^p)$ and $p\mapsto\coind(\zz^p)$. In particular, they imply that the quantities
\begin{equation}\label{e:definition_average_indices}
\begin{split}
\avind(\zz)&:= \frac 1{2\pi} \int_0^{2\pi} \ind_{e^{it}}(\zz)\,\diff t,\\ \avcoind(\zz)&:= \frac 1{2\pi} \int_0^{2\pi} \coind_{e^{it}}(\zz)\,\diff t 
\end{split}
\end{equation}
are always finite, and we have
\begin{align}\label{e:average_index_as_growth_rate}
\avind(\zz)&=\lim_{p\to\infty} \frac{\ind(\zz^p)}{p},\\
\avcoind(\zz)&=\lim_{p\to\infty} \frac{\coind(\zz^p)}{p}.
\end{align}
In the following, we will refer to $\avind(\zz)$ and $\avcoind(\zz)$ respectively as to the \textbf{average Morse index} and \textbf{coindex} of the critical point $\zz$. Notice that, by the conjugacy-invariance of the function $\theta\mapsto\ind_\theta(\zz)$, in the above expressions~\eqref{e:definition_average_indices} we can replace $2\pi$ by $\pi$, that is, we can equivalently average the index functions on the upper semi-circle. Equations~\eqref{e:average_index_as_growth_rate} and Proposition~\ref{p:Morse_idx_always_large} imply that 
\begin{gather*}
dk/2 \leq \avind(\zz) \leq 2dk,\\
dk/2 \leq \avcoind(\zz) \leq 2dk.
\end{gather*}
Since $\ind(\zz^p)+\coind(\zz^p)+\nul(\zz^p)=2dkp$, we further have
\begin{align*}
\avind(\zz)+\avcoind(\zz)=2dk.
\end{align*}
Another property of the average indices that follows immediately from their definitions is that
\begin{align*}
\avind(\zz^p)&=p\,\avind(\zz),\\ 
\avcoind(\zz^p)&=p\,\avcoind(\zz).
\end{align*}

Now, we are going to find optimal bounds from the gap between the average and the actual Morse indices. Such bounds plays an essential role in the multiplicity problem for periodic points of Hamiltonian diffeomorphisms (see Section~\ref{ss:Bott_biblio_remarks}). For now, we can only deal with the non-degenerate situation (Theorem~\ref{t:iteration_inequality_nondegenerate} will be superseded by the general Theorem~\ref{t:iteration_inequalities}). We recall that $d$ is the half-dimension of the domain of our Hamiltonian diffeomorphism $\phi$.

\begin{thm}\label{t:iteration_inequality_nondegenerate}
Assume that $\zz$ is a non-degenerate critical point of $F_1$, i.e.\ $\nul(\zz)=0$. Then $|\avind(\zz)-\ind(\zz)|< d$ and $|\avcoind(\zz)-\coind(\zz)|< d$.
\end{thm}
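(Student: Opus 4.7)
The plan is to exploit the piecewise-constant structure of the Bott index function $\theta\mapsto\ind_\theta(\zz)$ on $S^1$, together with the jump bounds provided by Lemma~\ref{l:properties_ind_theta}(iii). By the conjugation invariance in Lemma~\ref{l:properties_ind_theta}(i), I first rewrite
\begin{equation*}
\avind(\zz)=\frac{1}{\pi}\int_0^\pi \ind_{e^{it}}(\zz)\,\diff t.
\end{equation*}
The hypothesis $\nul(\zz)=\nul_1(\zz)=0$ combined with Lemma~\ref{l:properties_ind_theta}(ii) gives $1\notin\sigma(\diff\phi(z_0))$. Let $e^{is_1},\ldots,e^{is_M}$ be the eigenvalues of $\diff\phi(z_0)$ on the \emph{open} upper semicircle (possibly $M=0$), ordered so that $0<s_1<\cdots<s_M<\pi$; whether or not $-1$ is itself an eigenvalue is irrelevant since $\{\pi\}$ has measure zero. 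By Lemma~\ref{l:properties_ind_theta}(iii), $t\mapsto \ind_{e^{it}}(\zz)$ is constant on each open arc $(s_j,s_{j+1})$ (with the conventions $s_0:=0$ and $s_{M+1}:=\pi$); I denote its value there by $V_j$, so that $V_0=\ind(\zz)$.

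A direct Abel summation then yields the key formula
\begin{equation*}
\avind(\zz)-\ind(\zz)=\frac{1}{\pi}\sum_{k=0}^M(V_k-V_0)(s_{k+1}-s_k)=\sum_{j=1}^M (V_j-V_{j-1})\Big(1-\tfrac{s_j}{\pi}\Big).
\end{equation*}
The decisive estimate is the jump bound $|V_j-V_{j-1}|\leq n_j$ where $n_j:=\nul_{e^{is_j}}(\zz)$: applying Lemma~\ref{l:properties_ind_theta}(iii) at $\theta=e^{is_j}$, both $V_{j-1}-\ind_{e^{is_j}}(\zz)$ and $V_j-\ind_{e^{is_j}}(\zz)$ lie in $[0,n_j]$, so their difference lies in $[-n_j,n_j]$. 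Combined with $0<1-s_j/\pi<1$, the previous identity yields
\begin{equation*}
|\avind(\zz)-\ind(\zz)|\leq \sum_{j=1}^M n_j\Big(1-\tfrac{s_j}{\pi}\Big)<\sum_{j=1}^M n_j.
\end{equation*}

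Finally, I invoke the symplectic/real linear algebra fact that $\sum_{j=1}^M n_j\leq d$: by Lemma~\ref{l:properties_ind_theta}(ii), $n_j=\dim_\C\ker(\diff\phi(z_0)-e^{is_j}I)$, and complex conjugation of eigenvectors provides an isomorphism $\ker(\diff\phi(z_0)-e^{is_j}I)\cong\ker(\diff\phi(z_0)-e^{-is_j}I)$, so these $2M$ eigenspaces for pairwise distinct eigenvalues contribute $2\sum_j n_j$ linearly independent vectors inside $\C^{2d}$, yielding $\sum_{j=1}^M n_j\leq d$. If $M=0$ the function $\theta\mapsto\ind_\theta(\zz)$ is constant off the single point $-1$, so trivially $\avind(\zz)=\ind(\zz)$; otherwise the strict inequality above together with $\sum n_j\leq d$ gives $|\avind(\zz)-\ind(\zz)|<d$. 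The coindex bound is then automatic: the identity $\avind(\zz)+\avcoind(\zz)=2dk$ (already noted in the paper) combined with $\ind(\zz)+\coind(\zz)=2dk$ (from $\nul(\zz)=0$) gives $\avcoind(\zz)-\coind(\zz)=-(\avind(\zz)-\ind(\zz))$, which transfers the estimate.

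The main obstacle I anticipate is the Abel-summation bookkeeping and correctly matching the jump bound $|V_j-V_{j-1}|\leq n_j$ with the symplectic multiplicity count $\sum n_j\leq d$; the strict inequality $<d$ comes essentially for free from the positivity of each $s_j$.
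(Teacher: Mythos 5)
Your proof is correct and follows essentially the same route as the paper: the paper phrases the argument via the cumulative jump function $f(\mu)=\sum_{\theta\in\sigma_\mu}\bigl(\ind_{\theta^+}(\zz)-\ind_{\theta^-}(\zz)\bigr)$ and integrates, which is exactly the dual form of your Abel summation, and both arguments rely on the same three ingredients — the jump bound from Lemma~\ref{l:properties_ind_theta}(iii), the multiplicity count $\sum_j n_j\leq d$ on the open upper semicircle, and the spectral gap near $\theta=1$ (your factor $1-s_j/\pi<1$; the paper's factor $(\pi-\delta)/\pi<1$) for strictness. The reduction of the coindex estimate to the index one via $\avind(\zz)+\avcoind(\zz)=2dk$ and $\ind(\zz)+\coind(\zz)=2dk$ is a small stylistic difference (the paper says the coindex proof is ``identical''), but the content is the same.
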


\begin{proof}
We will provide the proof for the Morse index, the one for the coindex being identical. For any eigenvalue on the unit circle $\theta\in\sigma(\diff\phi(z_0))\cap S^1$, let $\epsilon>0$ be a small enough quantity so that $\sigma(\diff\phi(z_0))\cap S^1$ does not contain other eigenvalues with arguments in the interval $[\arg(\theta)-\epsilon,\arg(\theta)+\epsilon]$. We set $\theta^\pm:=\theta e^{\pm i\epsilon}$. For all $\mu\in S^1$ with $\mathrm{Im}(\mu)>0$, we denote by $\sigma_\mu$ the (possibly empty) set of eigenvalues of $\diff\phi(z_0)$ on the unit circle with argument in the open interval $(0,\arg(\mu))$, and we define 
\[f(\mu):=\sum_{\theta\in \sigma_\mu} \big(\ind_{\theta^+}(\zz)-\ind_{\theta^-}(\zz)\big).\]
By its definition, the function $f$ is piecewise constant. By Lemma~\ref{l:properties_ind_theta}(iii), if $\mu$ is not an eigenvalue of $\diff\phi(z_0)$, we have
\begin{align*}
\ind_{\mu}(\zz)-\ind(\zz)=  f(\mu).
\end{align*}
By integrating this equality in $\mu$ on the upper semi-circle, we obtain
\begin{align*}
\avind(\zz)-\ind(\zz) &= \frac1\pi \int_0^{\pi} \big(\ind_{e^{it}}(\zz) - \ind(\zz)\big)\,\diff t
\\
 &= \frac1\pi \int_0^{\pi} f(e^{it})\, \diff t,
\end{align*}
By the equality in Lemma~\ref{l:properties_ind_theta}(iii), for all $t\in(0,\pi)$ we can estimate
\begin{align*}
|f(e^{it})|\leq\sum_{\theta\in \sigma_{\exp(it)}}\nul_\theta(\zz) 
\leq\sum_{\theta\in S^1\cap\{\mathrm{Im}>0\}}\nul_\theta(\zz) 
\leq\frac12\sum_{\theta\in S^1}\nul_\theta(\zz) \leq d.
\end{align*}
Let $\delta>0$ be such that there is no eigenvalue of $\diff\phi(z_0)$ on the unit circle with argument in $[0,\delta]$. In particular, the function $t\mapsto f(e^{it})$ is zero on the interval $[0,\delta]$. Therefore, we conclude
\begin{align*}
|\avind(\zz)-\ind(\zz)| &= \left|\frac1\pi \int_\delta^{\pi} f(e^{it})\, \diff t\right|\\
&\leq \frac1\pi \int_\delta^{\pi} |f(e^{it})|\, \diff t\\
&\leq \frac{\pi-\delta}{\pi}\,d\\
&< d. \qedhere
\end{align*}
\end{proof}

\subsection{Splitting numbers}\label{s:splitting_numbers}

The generalization of Theorem~\ref{t:iteration_inequality_nondegenerate} to the degenerate situation requires new ingredients, which incidentally will shed some light on the dependence of the Morse index of the critical point associated to a fixed point $z_0\in\fix(\phi)$ from the specific generating family employed (this dependence will be explored further in Section~\ref{s:Maslov_index}). 

Since in this section we will work in the fixed period $p=1$, in order to ease the notation we will drop it from all appearing symbols, thus writing $H_\theta$ for the $\theta$-Hessian $H_{1,\theta}$. We will denote by $h_\theta:\C^{2dk}\times\C^{2dk}\to\C$ the Hermitian bilinear form associated to $H_\theta$, i.e.
\begin{align*}
h_\theta(\ZZ,\ZZ')=\langle H_\theta\ZZ,\ZZ'\rangle.
\end{align*}
We consider the vector subspace
\begin{align*}
\VV:=\big\{\ZZ=(Z_0,...,Z_{k-1})\in\C^{2dk}\ \big|\ X_0=0\big\},
\end{align*}
where, as before, we write $Z_j=(X_j,Y_j)$. We will reduce the computation of the inertia of $h_\theta$ to the inertia of its restrictions to $\VV$ and to its $h_\theta$-orthogonal space $\VV^{h_\theta}$ by means of Propositions~\ref{p:nul_restricted_form} and~\ref{p:index_restricted_form}, which give
\begin{align}
\label{e:ind_h_theta_restricted}
\ind(h_\theta)  =\ & \ind(h_\theta|_{\VV\times\VV}) + \ind(h_\theta|_{\VV^{h_\theta}\times\VV^{h_\theta}}) \\
\nonumber
& + \dim_\C(\VV\cap\VV^{h_\theta}) - \dim_\C(\VV\cap\ker(H_\theta)),
\\
\label{e:coind_h_theta_restricted}
\coind(h_\theta)  =\ & \coind(h_\theta|_{\VV\times\VV}) + \coind(h_\theta|_{\VV^{h_\theta}\times\VV^{h_\theta}}) \\
\nonumber
& + \dim_\C(\VV\cap\VV^{h_\theta}) - \dim_\C(\VV\cap\ker(H_\theta)),
\\
\label{e:nul_h_theta_restricted}
\nul(h_\theta)=\ &\nul(h_\theta|_{\VV^{h_\theta}\times\VV^{h_\theta}}) - \dim_\C(\VV\cap\VV^{h_\theta}) + \dim_\C(\VV\cap\ker(H_\theta)).
\end{align} 
We refer the reader to Appendix~\ref{a:restriction_quadratic_forms} for the terminology and the notation concerning Hermitian forms. The restriction of  $h_\theta$ to $\VV$ is independent of $\theta$. Indeed, for all $\ZZ,\ZZ'\in\VV$, we have
\begin{align*}
h_\theta(\ZZ,\ZZ')
= & \,
\langle \overline\theta\, Y_{k-1} - Y_{0} + A_{k-1} X_0 + \overline\theta\, B_{k-1}^T Y_{k-1},X_0'\rangle \\
 & + \langle \theta\, X_{0} - X_{k-1} + \theta\,B_{k-1} X_{0} + C_{k-1} Y_{k-1},Y_{k-1}'\rangle \\
 & + \sum_{j=1}^{k-1} \langle Y_{j-1} - Y_{j} + A_{j-1} X_j + B_{j-1}^T Y_{j-1} , X_j'\rangle \\
 & + \sum_{j=0}^{k-2} \langle X_{j+1} - X_{j} + B_{j} X_{j+1} + C_{j} Y_{j},Y_j'\rangle\\ 
 = & \,
 \langle - X_{k-1} + C_{k-1} Y_{k-1},Y_{k-1}'\rangle + \langle X_{1} + B_{0} X_{1} + C_{0} Y_{0},Y_0'\rangle  \\
 & + \sum_{j=1}^{k-1} \langle Y_{j-1} - Y_{j} + A_{j-1} X_j + B_{j-1}^T Y_{j-1} , X_j'\rangle \\
 & + \sum_{j=1}^{k-2} \langle X_{j+1} - X_{j} + B_{j} X_{j+1} + C_{j} Y_{j},Y_j'\rangle .
\end{align*}
In particular, the inertia indices $\ind(h_\theta|_{\VV\times\VV})$ and $\coind(h_\theta|_{\VV\times\VV})$ are independent of $\theta\in S^1$. The orthogonal space $\VV^{h_\theta}$ contains precisely the vectors $\ZZ\in\C^{2dk}$ such that 
\begin{align*}
 \theta\, X_{0} - X_{k-1} + \theta\,B_{k-1} X_{0} + C_{k-1} Y_{k-1} & =0,\\
 X_{j+1} - X_{j} + B_{j} X_{j+1} + C_{j} Y_{j} & = 0, &\forall j=0,...,k-2,\\
 Y_{j-1} - Y_{j} + A_{j-1} X_j + B_{j-1}^T Y_{j-1} & =0, &\forall j=1,...,k-1.
\end{align*}
This means that, if we set $P_j:=\diff\phi_j(z_j)$ for all $j=0,...,k-1$,
\begin{align*}
\VV^{h_\theta}=
\left\{
(Z_0,...,Z_{k-1})\in \C^{2dk}
\, \left|\ 
  \begin{array}{@{}l@{}}
    P_j Z_j =Z_{j+1}\quad\forall j=0,...,k-2 \vspace{5pt} \\ 
    P_{k-1} Z_{k-1} =(\theta X_0,\tilde Y_k)\mbox{ for some }\tilde Y_k\in\C^d 
  \end{array}
\right.\right\}.
\end{align*}
In particular, $\VV^{h_\theta}$ is isomorphic to $(\diff\phi(z_0)-\theta I)^{-1}(\{0\}\times\C^d)$ via the isomorphism $\ZZ\mapsto Z_0$. Therefore, its dimension is bounded as
\begin{align*}
\dim_\C \VV^{h_\theta}\leq d + \dim_\C\ker(\diff\phi(z_0)-\theta I).
\end{align*}
The intersection $\VV\cap\VV^{h_\theta}$ is equal to
\begin{align*}
\VV\cap\VV^{h_\theta}=
\left\{
(Z_0,...,Z_{k-1})\in \C^{2dk}
\, \left|\ 
  \begin{array}{@{}l@{}}
    X_0=0 \vspace{5pt} \\
    P_j Z_j =Z_{j+1}\quad\forall j=0,...,k-2 \vspace{5pt} \\ 
    P_{k-1} Z_{k-1}=(0,\tilde Y_k)\mbox{ for some }\tilde Y_k\in\C^d 
  \end{array}
\right.\right\}.
\end{align*}
In particular, it is independent of $\theta$. The intersection $\VV\cap\ker H_\theta$ is equal to
\begin{align*}
\VV\cap\ker H_\theta
=
\left\{
(Z_0,...,Z_{k-1})\in \C^{2dk}
\, \left|\ 
  \begin{array}{@{}l@{}}
    X_0=0 \vspace{5pt} \\
    P_j Z_j=Z_{j+1}\quad\forall j=0,...,k-2 \vspace{5pt} \\ 
    P_{k-1} Z_{k-1} =(0,\theta Y_0) 
  \end{array}
\right.\right\}.
\end{align*}
Therefore, the map $\ZZ\mapsto Z_0=(0,Y_0)$ is an isomorphism between $\VV\cap\ker H_\theta$ and $\ker(\diff\phi(z_0)-\theta I)\cap (\{0\}\times\C^d)$. In particular
\begin{align*}
\dim_\C(\VV\cap\ker H_\theta) = \dim_\C \big(\ker(\diff\phi(z_0)-\theta I)\cap (\{0\}\times\C^d)\big).
\end{align*}
Let us now have a look at the restriction of the Hermitian form $h_\theta$ to $\VV^{h_\theta}$. For all $\ZZ,\ZZ'\in\VV^{h_\theta}$, we have
\begin{align*}
h_\theta(\ZZ,\ZZ')
& =  \langle \overline\theta Y_{k-1} - Y_0  + \underbrace{A_{k-1} X_0 + \overline\theta B_{k-1}^T Y_{k-1}}_{\overline \theta  \tilde Y_{k} - \overline \theta  Y_{k-1}} , X_0' \rangle \\
& =  \langle \overline \theta  \tilde Y_{k} - Y_0  , X_0' \rangle \\
& = \omega( (I-\overline\theta\,\diff\phi(z_0))Z_0,Z_0' ),
\end{align*}
where $\omega$ denotes the Hermitian extension of the standard symplectic form on $\R^{2d}$, given by $\omega(Z,Z')=\langle X,Y'\rangle -\langle Y,X'\rangle$. Summing up, we have shown that $\ind(h_\theta|_{\VV\times\VV})$ and $\dim_\C(\VV\cap\VV^{h_\theta})$ are independent of $\theta$, while $\dim_\C(\VV\cap\ker(H_\theta))$ and $\ind(h_\theta|_{\VV^{h_\theta}\times\VV^{h_\theta}})$ are completely determined by the linearized map $\diff\phi(z_0)$. This, together with equations~\eqref{e:ind_h_theta_restricted} and~\eqref{e:coind_h_theta_restricted}, implies the following.

\begin{lem}\label{p:Bott_function_dependence}
The functions $\theta\mapsto\ind_\theta(\zz)=\ind(h_\theta)$ and $\theta\mapsto\coind_\theta(\zz)=\coind(h_\theta)$ are completely determined by the linearized map $P:=\diff\phi(z_0)\in\Sp(2d)$ up to additive constants.
\hfill\qed
\end{lem}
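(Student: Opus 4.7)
The proof is essentially a bookkeeping exercise that collects the observations accumulated during the preceding computation; the lemma is a direct consequence of the structural decompositions~\eqref{e:ind_h_theta_restricted} and~\eqref{e:coind_h_theta_restricted}. My plan is therefore to organize the proof around these two identities, isolating the four contributions on their right-hand sides and explaining precisely how each of them depends on $\theta$.

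First, I will fix the symplectic splitting $\C^{2dk}=\VV\oplus(\text{complement})$ used in the excerpt, where $\VV=\{\ZZ\mid X_0=0\}$, and recall the formulas \eqref{e:ind_h_theta_restricted} and \eqref{e:coind_h_theta_restricted}. Next, I will note from the explicit expression of $h_\theta(\ZZ,\ZZ')$ on $\VV\times\VV$ computed just above the statement (in which every occurrence of $\theta$ or $\overline\theta$ drops out because $X_0=X_0'=0$) that both $\ind(h_\theta|_{\VV\times\VV})$ and $\coind(h_\theta|_{\VV\times\VV})$ are $\theta$-independent; these are the additive constants mentioned in the lemma. By the same token, the description of $\VV\cap\VV^{h_\theta}$ obtained in the excerpt shows that this intersection is cut out by conditions involving only $P_0,\ldots,P_{k-1}$, so $\dim_\C(\VV\cap\VV^{h_\theta})$ is also a $\theta$-independent constant.

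Then I will turn to the two pieces that do depend on $\theta$. From the identification $\VV\cap\ker H_\theta\cong \ker(P-\theta I)\cap(\{0\}\times\C^d)$ derived in the preceding paragraphs, the quantity $\dim_\C(\VV\cap\ker H_\theta)$ depends on $\theta$ only through the linearized map $P=\diff\phi(z_0)$. For the restriction $h_\theta|_{\VV^{h_\theta}\times\VV^{h_\theta}}$, I will recall the identification $\VV^{h_\theta}\cong (P-\theta I)^{-1}(\{0\}\times\C^d)$ via $\ZZ\mapsto Z_0$, under which the formula
\begin{align*}
h_\theta(\ZZ,\ZZ')=\omega\bigl((I-\overline\theta P)Z_0,Z_0'\bigr)
\end{align*}
already established in the excerpt exhibits this Hermitian form as one depending only on $P$ and $\theta$. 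Hence $\ind$ and $\coind$ of this restriction are functions of $\theta$ that depend on the data only through $P$.

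Plugging these four observations into \eqref{e:ind_h_theta_restricted} and \eqref{e:coind_h_theta_restricted} yields the desired conclusion: the $\theta$-dependence of $\ind_\theta(\zz)$ and $\coind_\theta(\zz)$ is entirely carried by the terms built from $P$, while the remaining contributions combine into single additive constants. There is no real obstacle here—the whole difficulty has already been absorbed by the explicit computations performed on $\VV$, $\VV^{h_\theta}$, and $\VV\cap\ker H_\theta$ before the statement; the only care required is to keep track of which constant depends on the generating family (and is absorbed into the additive constant) and which functions of $\theta$ genuinely depend on $P$.
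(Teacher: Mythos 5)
Your proposal is correct and follows essentially the same route as the paper: the lemma is stated in the paper with a \qed symbol precisely because it is an immediate consequence of the formulas~\eqref{e:ind_h_theta_restricted}--\eqref{e:coind_h_theta_restricted} together with the four observations you list (the $\theta$-independence of $\ind(h_\theta|_{\VV\times\VV})$ and of $\dim_\C(\VV\cap\VV^{h_\theta})$, and the fact that $\dim_\C(\VV\cap\ker H_\theta)$ and $\ind(h_\theta|_{\VV^{h_\theta}\times\VV^{h_\theta}})$ are expressed purely in terms of $P=\diff\phi(z_0)$ and $\theta$). One small terminological caution: $\VV$ and $\VV^{h_\theta}$ need not form a direct-sum splitting of $\C^{2dk}$, since they may intersect nontrivially; the correction terms in Propositions~\ref{p:nul_restricted_form} and~\ref{p:index_restricted_form} account for exactly this overlap, so you should speak of the ``restriction/orthogonal decomposition'' of the form rather than a symplectic splitting of the space.
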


We call \textbf{splitting numbers} of the linearized map $P$ at $\theta\in S^1$ the two quantities
\begin{equation}\label{e:splitting_numbers}
\begin{split}
\SSS_P^+(\theta) & = \ind_{\theta^+}(\zz) - \ind_{\theta}(\zz)=\ind(h_{\theta^+})-\ind(h_\theta),\\
\SSS_P^-(\theta) & = \ind_{\theta^-}(\zz) - \ind_{\theta}(\zz)=\ind(h_{\theta^-})-\ind(h_\theta),
\end{split} 
\end{equation}
where $\theta^\pm=\theta e^{\pm i\epsilon}$, and $\epsilon>0$ is sufficiently small so that $\sigma(P)\cap S^1$ does not contain eigenvalues with arguments in $[\arg(\theta)-\epsilon,\arg(\theta))\cup(\arg(\theta),\arg(\theta)+\epsilon]$. Lemma~\ref{p:Bott_function_dependence} guarantees that $\SSS_P^\pm$ is a good notation: the splitting numbers only depend on the linearized map $P\in\Sp(2d)$. Namely, if $\phi'$ is another Hamiltonian diffeomorphism of $\R^{2d}$ with a fixed point $z_0'$ and the same linearized map $P=\diff\phi(z_0)=\diff\phi'(z_0')$, given a  generating family $F'$ for $\phi'$, the splitting numbers functions associated to the $\theta$-Hessian of $F'$ at the critical point corresponding to $z_0'$ are still $\SSS_P^\pm$. By replacing indices with coindices in~\eqref{e:splitting_numbers}, we can define the \textbf{cosplitting numbers} 
\begin{align*}
\coSSS_P^+(\theta) & = \coind_{\theta^+}(\zz) - \coind_{\theta}(\zz)=\coind(h_{\theta^+})-\coind(h_\theta),\\
\coSSS_P^-(\theta) & = \coind_{\theta^-}(\zz) - \coind_{\theta}(\zz)=\coind(h_{\theta^-})-\coind(h_\theta),
\end{align*}
which possess analogous properties. The equality in Lemma~\ref{l:properties_ind_theta}(iii) can be rewritten as
\begin{align}\label{e:relation_splitting_cosplitting}
\underbrace{\SSS^\pm(\theta)}_{\geq0}
+
\underbrace{\coSSS^\pm(\theta)}_{\geq0} 
= 
\dim_\C \ker(P-\theta I),
\qquad
\forall \theta\in S^1.
\end{align}

\begin{war}
Many authors in symplectic topology use a different sign convention, and thus call splitting numbers what we call cosplitting numbers. The convention adopted in a paper can be easily checked on Example~\ref{ex:splitting_numbers_shear}. See also Warning~\ref{w:convention_Maslov_index} in the next section.
\hfill\qed
\end{war}

\begin{rem}
The splitting and cosplitting numbers can be defined for any symplectic matrix $P\in\Sp(2d)$. Indeed, the symplectic group $\Sp(2d)$ is connected, and therefore the map $\phi(z)=Pz$ is a Hamiltonian diffeomorphism such that $\diff\phi(0)=P$.
\hfill\qed
\end{rem}

We will now strengthen Lemma~\ref{p:Bott_function_dependence} as follows.

\begin{lem}
The splitting and cosplitting numbers $\SSS_P^\pm(\theta)$ and $\coSSS_P^\pm(\theta)$ only depend on the conjugacy class of $P$ in the symplectic group: for all $Q\in\Sp(2d)$, we have
\begin{align*}
\SSS_{P}^\pm(\theta) & =\SSS_{QPQ^{-1}}^\pm(\theta),\\
\coSSS_{P}^\pm(\theta) & =\coSSS_{QPQ^{-1}}^\pm(\theta).
\end{align*}
\end{lem}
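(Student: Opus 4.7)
The plan is to connect $P$ and $QPQ^{-1}$ by a continuous path of symplectic conjugates of $P$ and to show that the splitting and cosplitting numbers are constant along it. Using path-connectedness of $\Sp(2d)$, pick a continuous path $s\mapsto Q_s$ from $Q_0=I$ to $Q_1=Q$ and set $P_s:=Q_sPQ_s^{-1}$. Then $P_s$ continuously interpolates between $P$ and $QPQ^{-1}$, and since conjugation preserves the spectrum, $\dim_\C\ker(P_s-\theta I)=\dim_\C\ker(P-\theta I)$ for every $s\in[0,1]$ and $\theta\in\C$.

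To apply the definition of the splitting numbers, I need a generating family for each $P_s$ that varies continuously with $s$. Starting from a fixed continuous path $\Gamma:[0,1]\to\Sp(2d)$ from $I$ to $P$, define $\Gamma_s(t):=Q_s\Gamma(t)Q_s^{-1}$: a continuous path from $I$ to $P_s$ depending continuously on $s$. Since $s\in[0,1]$ is compact, a single $k\in\N$ can be chosen large enough that all factors of the induced factorization are sufficiently close to the identity, uniformly in $s$. The quadratic generating families $F_s$ associated as in Remark~\ref{r:quadratic_generating_families} to these factorizations then depend continuously on $s$, and so do the $\theta$-Hessians $H_{s,\theta}$, viewed as continuous families of $2dk\times 2dk$ Hermitian matrices.

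The key ingredient is the following observation: for any continuous family of Hermitian matrices whose nullity is constant in $s$, both the index and the coindex are constant. Indeed, the index is lower semi-continuous on Hermitian matrices, because any subspace on which $H_{s_0}$ is negative definite remains so for all $s$ near $s_0$, by compactness of the unit sphere of that subspace; the same reasoning yields lower semi-continuity of the coindex. The identity $\ind H_s+\coind H_s+\nul H_s=2dk$ combined with constancy of $\nul H_s$ then forces $\ind H_s=2dk-\nul H_s-\coind H_s$ to be upper semi-continuous as well. Being integer-valued and continuous on the connected interval $[0,1]$, it must be constant, and similarly for the coindex.

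Applying this to the families $H_{s,\theta}$ and $H_{s,\theta^\pm}$: by Lemma~\ref{l:properties_ind_theta}(ii), $\nul_\theta(F_s)=\dim_\C\ker(P_s-\theta I)$ is constant in $s$, so $\ind_\theta(F_s)$ is independent of $s$. For $\epsilon>0$ sufficiently small, $\theta^\pm\notin\sigma(P)=\sigma(P_s)$ for every $s$, so $H_{s,\theta^\pm}$ has constant nullity zero and $\ind_{\theta^\pm}(F_s)$ is likewise independent of $s$. Hence $\SSS^\pm_{P_s}(\theta)=\ind_{\theta^\pm}(F_s)-\ind_\theta(F_s)$ is constant in $s$, giving $\SSS_P^\pm(\theta)=\SSS_{QPQ^{-1}}^\pm(\theta)$; the same reasoning with $\coind$ in place of $\ind$ produces the analogous identity for the cosplitting numbers. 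The main delicate point is the constancy-under-constant-nullity observation, since in general neither the kernels nor the non-zero spectral subspaces of $H_{s,\theta}$ vary continuously with $s$; the semi-continuity argument bypasses this by exploiting the algebraic relation $\ind+\coind+\nul=2dk$.
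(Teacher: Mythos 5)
Your proof is correct and follows essentially the same strategy as the paper: connect $P$ to $QPQ^{-1}$ by a continuous path $s\mapsto Q_sPQ_s^{-1}$ of conjugates, build a continuous family of quadratic generating families, and observe that the corresponding $\theta$-Hessians have constant nullity along the path, hence constant index and coindex. The only place you go beyond the paper is in justifying the step that constant nullity forces constant index and coindex (via lower semi-continuity of index and coindex plus the relation $\ind+\coind+\nul=2dk$); the paper asserts this without detail, and your argument for it is correct.
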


\begin{proof}
Since the symplectic group is connected, there exists a smooth path of symplectic matrices $Q_t\in\Sp(2d)$ such that $Q_0=I$ and $Q_1=Q$. We set $\phi^t(z):=Q_t P Q_t^{-1}z$, and we consider $t\mapsto\phi^t$ as a smooth path of Hamiltonian diffeomorphisms of $\R^{2d}$. For $k\in\N$ large enough, there exists a smooth homotopy 
\[F^t:\R^{2dk}\to\R,\qquad t\in[0,1],\]
$F^t$ being the quadratic generating family of $\phi^t$. The origin $0\in\R^{2dk}$ is the critical point of $F^t$ corresponding to the fixed point $0\in\R^{2d}$ of $\phi^t$. For all $\theta\in S^1$, we denote by $H^t_\theta$  the $\theta$-Hessian of $F^t$ at the origin, and by $h^t_\theta$ the associated Hermitian form
\begin{align*}
h^t_\theta(\ZZ,\ZZ')=\langle H^t_\theta\ZZ,\ZZ'\rangle.
\end{align*}
Notice that $H^t_\theta$ depends smoothly on $(t,\theta)\in[0,1]\times S^1$, and that 
\[\ker H^t_\theta=\ker(Q_t P Q_t^{-1}-\theta I)=\ker (Q_t(P -\theta I)Q_t^{-1}).\]
In particular, the function $t\mapsto\dim_\C\ker H^t_\theta$ is constant. This readily implies that the functions $t\mapsto\ind(h^t_\theta)$ and $t\mapsto\coind(h^t_\theta)$ are constant as well, and therefore
\begin{align*}
\SSS_{P}^\pm(\theta)
& =  
\ind(h_{\theta^\pm}^0)-\ind(h_\theta^0) =
\ind(h_{\theta^\pm}^1)-\ind(h_\theta^1) =
\SSS_{QPQ^{-1}}^\pm(\theta),\\
\coSSS_{P}^\pm(\theta)
& =  
\coind(h_{\theta^\pm}^0)-\coind(h_\theta^0) =
\coind(h_{\theta^\pm}^1)-\coind(h_\theta^1) =
\coSSS_{QPQ^{-1}}^\pm(\theta). 
\end{align*} 
\end{proof}

Consider two positive integers $d',d''$, and set $d:=d'+d''$. We identify $\R^{2d'}$ with the symplectic subspace $\R^{2d'}\times\{0\}\subset\R^{2d}$, and $\R^{2d''}$ with the symplectic subspace $\{0\}\times\R^{2d''}\subset\R^{2d}$. Given two symplectic matrices $P'\in\Sp(2d')$ and $P''\in\Sp(2d'')$, their direct sum is the symplectic matrix $P=P'\oplus P''\in\Sp(2d)$ given by $P(\zz',\zz'')=(P'\zz',P''\zz'')$. The next lemma shows that the splitting and cosplitting numbers behave naturally with respect to the direct sum operation.

\begin{lem}\label{l:direct_sum_splitting_numbers}
For all $P'\in\Sp(2d')$ and $P''\in\Sp(2d'')$, we have 
\begin{align*}
\SSS_{P'\oplus P''}^\pm(\theta) & =\SSS_{P'}^\pm(\theta)+\SSS_{P''}^\pm(\theta),\\ 
\coSSS_{P'\oplus P''}^\pm(\theta) & =\coSSS_{P'}^\pm(\theta)+\coSSS_{P''}^\pm(\theta).
\end{align*}
\end{lem}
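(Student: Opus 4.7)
The plan is to exhibit, for an appropriate choice of factorization, a generating family for $P = P' \oplus P''$ that literally decomposes as an orthogonal direct sum of a generating family for $P'$ and one for $P''$. Once this is done, the $\theta$-Hessian decomposes as a direct sum of Hermitian matrices, so its Morse index and coindex add, and the identities for $\SSS^\pm$ and $\coSSS^\pm$ follow by taking differences at $\theta^\pm$ and $\theta$.

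Concretely, pick continuous paths $\Gamma':[0,1]\to\Sp(2d')$ and $\Gamma'':[0,1]\to\Sp(2d'')$ joining the identity to $P'$ and $P''$ respectively. Their direct sum $\Gamma(t) := \Gamma'(t)\oplus\Gamma''(t)$ is a path in $\Sp(2d)$ from $I$ to $P$. Choose a common integer $k$ large enough that all three constructions of Subsection 1.1 produce genuine generating functions for the factors, and set
\[
P_j' := \Gamma'\!\left(\tfrac{j+1}{k}\right)\Gamma'\!\left(\tfrac{j}{k}\right)^{-1},\quad
P_j'' := \Gamma''\!\left(\tfrac{j+1}{k}\right)\Gamma''\!\left(\tfrac{j}{k}\right)^{-1},\quad
P_j := P_j'\oplus P_j''.
\]
Then the linear Hamiltonian diffeomorphism $\phi_j(z',z'')=(P_j'z',P_j''z'')$ admits the quadratic generating function $f_j(X_{j+1},Y_j)=f'_j(X'_{j+1},Y'_j)+f''_j(X''_{j+1},Y''_j)$, where $f'_j,f''_j$ are the normalized quadratic generating functions of $P'_j$ and $P''_j$.

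Substituting into Chaperon's formula \eqref{e:Chaperon_generating_family} and permuting the coordinates so that all the $\R^{2d'}$-factors come first, one checks directly that
\[
F(\ZZ',\ZZ'') = F'(\ZZ') + F''(\ZZ''),
\]
where $F'$, $F''$ are the Chaperon generating families of $P'$, $P''$ associated to the chosen factorizations. Exactly the same substitution works after the ``twisting'' that defines the $\theta$-Hessian in Subsection 2.1, so the $\theta$-Hessian $H_\theta$ of $F$ at the origin is block-diagonal: $H_\theta = H'_\theta \oplus H''_\theta$ (up to the same reordering). The associated Hermitian form $h_\theta$ is therefore the orthogonal direct sum $h'_\theta \oplus h''_\theta$, and
\[
\ind(h_\theta) = \ind(h'_\theta)+\ind(h''_\theta),\qquad
\coind(h_\theta) = \coind(h'_\theta)+\coind(h''_\theta).
\]
Applying these identities at $\theta^\pm$ and $\theta$ and subtracting yields the claimed additivity of $\SSS^\pm$ and $\coSSS^\pm$, by their definitions in \eqref{e:splitting_numbers}.

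The only slightly delicate point is the bookkeeping needed to see that the Chaperon family of the direct-summed factorization really splits as a sum of the two individual Chaperon families: this is the step where a careful reordering of the variables $(x_j,y_j) = ((x'_j,x''_j),(y'_j,y''_j))$ is required, and the same care is needed for the ``cyclically twisted'' expression defining $H_{p,\theta}$ in the case $p=1$. Once this identification is made, the remainder is purely linear-algebraic.
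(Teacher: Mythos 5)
Your proof is correct and takes essentially the same route as the paper: build the Chaperon generating family for $P'\oplus P''$ as the orthogonal sum $F(\zz',\zz'')=F'(\zz')+F''(\zz'')$, observe that the $\theta$-Hessian is then block-diagonal $H_\theta=H'_\theta\oplus H''_\theta$, and conclude by additivity of index/coindex. The paper's proof is terser (it simply asserts that $F'+F''$ generates $P'\oplus P''$), whereas you supply the factor-by-factor justification via the direct-summed path $\Gamma'\oplus\Gamma''$; the extra bookkeeping you flag is real but elementary and your account is accurate.
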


\begin{proof}
For an integer $k>0$ large enough, we can find quadratic generating families $F':\R^{2d'k}\to\R$ and $F'':\R^{2d''k}\to\R$ for the matrices $P'$ and $P''$ (seen as Hamiltonian diffeomorphisms of $\R^{2d'}$ qnd $\R^{2d''}$ respectively). For each $\theta\in S^1$, we denote by $H_\theta'$ and $H_\theta''$ the $\theta$-Hessians of $F'$ and $F''$ at the origin, and by $h_\theta'$ and $h_\theta''$ the associated Hermitian bilinear forms. The function $F:\R^{2dk}\to\R$ given by $F(\zz',\zz'')=F'(\zz')+F''(\zz'')$ is a quadratic generating function for the matrix $P'\oplus P''$. Its $\theta$-Hessian at the origin is $H_\theta=H_\theta'\oplus H_\theta''$. In particular, index and coindex of the associated Hermitian  form $h_\theta$ satisfy
\begin{align*}
\ind(h_\theta) & =\ind(h_\theta')+\ind(h_\theta''),
\\
\coind(h_\theta) & =\coind(h_\theta')+\coind(h_\theta'').
\end{align*}
This implies the lemma.
\end{proof}

The following statement is the last ingredient that we need in order to prove the generalization of Theorem~\ref{t:iteration_inequality_nondegenerate}.

\begin{lem}\label{l:bound_splitting_numbers}
For all $P\in\Sp(2d)$ and $\theta\in S^1$, we have 
\begin{gather*}
0\leq\SSS_P^\pm(\theta)\leq\min\{\dim_\C\ker(P-\theta I),d\},\\
0\leq\coSSS_P^\pm(\theta)\leq\min\{\dim_\C\ker(P-\theta I),d\}.
\end{gather*}
\end{lem}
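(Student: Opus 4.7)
The non-negativity of all four quantities is immediate from the inequality in Lemma~\ref{l:properties_ind_theta}(iii), and the bound by $\dim_\C\ker(P-\theta I)$ then follows at once from identity~\eqref{e:relation_splitting_cosplitting}: since $\SSS_P^\pm(\theta)+\coSSS_P^\pm(\theta)=\dim_\C\ker(P-\theta I)$ and both summands are non-negative, each of them is at most the total.

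The substantive claim is the bound by $d$, and my plan is to realize the splitting numbers as inertias of a natural Hermitian form on $\ker(P-\theta I)\subset\C^{2d}$. The family $t\mapsto H_{\theta e^{it}}$ is a smooth path of Hermitian matrices passing through $H_\theta$ at $t=0$. Standard first-order spectral perturbation theory asserts that the $\dim_\C\ker H_\theta$ zero eigenvalues of $H_\theta$ move linearly in $t$ at first order, and the velocities are exactly the eigenvalues of the restriction to $\ker H_\theta$ of the Hermitian form
\[
q_\theta(\ZZ,\ZZ'):=\frac{\partial}{\partial t}\bigg|_{t=0}\langle H_{\theta e^{it}}\ZZ,\ZZ'\rangle.
\]
A direct sign analysis of the perturbed eigenvalues identifies $\SSS_P^+(\theta)$ with the negative inertia of $q_\theta$ and $\coSSS_P^+(\theta)$ with its positive inertia; the variants $\SSS_P^-(\theta),\coSSS_P^-(\theta)$ correspond to the inertias of $-q_\theta$.

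It remains to compute $q_\theta$ explicitly. Only the blocks of $H_\theta$ coupling $X_0$ with $Y_{k-1}$ depend on $\theta$, so a direct differentiation produces, for $\ZZ,\ZZ'\in\ker H_\theta$,
\[
q_\theta(\ZZ,\ZZ')=i(X_{k-1}-C_{k-1}Y_{k-1})\cdot\overline{Y_{k-1}'}-i(Y_0-A_{k-1}X_0)\cdot\overline{X_0'}.
\]
Symmetrizing via Hermiticity of $q_\theta$ and invoking the symplectic invariance $\omega(Z_{k-1},Z_{k-1}')=\omega(Z_0,Z_0')$ on $\ker(P-\theta I)$ (valid because $P_{k-1}Z_{k-1}=\theta Z_0$ and $P_{k-1}$ is symplectic), the $A_{k-1}$ and $C_{k-1}$ terms cancel in pairs and the expression collapses to $q_\theta(\ZZ,\ZZ')=i\,\omega(Z_0,Z_0')$ under the isomorphism $\ZZ\leftrightarrow Z_0$ of Lemma~\ref{l:properties_ind_theta}(ii). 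Finally, the Hermitian extension $i\omega$ on $\C^{2d}$ has inertia $(d,d)$ (its representing matrix has eigenvalues $\pm 1$ each of multiplicity $d$), so both positive and negative inertias of its restriction to any subspace are bounded by $d$; applied to $\ker(P-\theta I)$ this yields $\SSS_P^\pm(\theta),\coSSS_P^\pm(\theta)\leq d$. The main obstacle is the symmetrization step leading to $q_\theta=i\omega$: the $A_{k-1},C_{k-1}$ contributions only disappear after the symplectic invariance is used to replace $\omega(Z_{k-1},Z_{k-1}')$ by $\omega(Z_0,Z_0')$.
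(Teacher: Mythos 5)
Your approach is genuinely different from the paper's: rather than decomposing $h_\theta$ via the subspace $\VV=\{X_0=0\}$ and invoking Proposition~\ref{p:index_restricted_form} together with the dimension bound $\dim_\C\VV^{h_\theta}\leq d+\dim_\C\ker(P-\theta I)$, you compute the first derivative of the $\theta$-Hessian along the circle and realize the jump of the index as the inertia of a ``crossing form'' on $\ker H_\theta$. Identifying that crossing form with $i\omega$ on $\ker(P-\theta I)$ (which your algebra correctly establishes, and which is consistent with the paper's formula $h_\theta|_{\VV^{h_\theta}\times\VV^{h_\theta}}=\omega((I-\overline\theta P)Z_0,Z_0')$) is an attractive route: it directly ties the splitting numbers to the symplectic form, in the spirit of the Robbin--Salamon crossing-form approach cited in the bibliographical remarks.

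However, there is a genuine gap in the step ``a direct sign analysis identifies $\SSS_P^+(\theta)$ with the negative inertia of $q_\theta$.'' First-order perturbation theory gives the signs of the perturbed eigenvalues only when $q_\theta|_{\ker H_\theta}$ is \emph{non-degenerate}; a zero first-order velocity leaves the sign undetermined. And $q_\theta|_{\ker H_\theta}=i\omega|_{\ker(P-\theta I)}$ is degenerate precisely when $\theta$ is a non-semisimple eigenvalue of $P$: e.g., for the shear of Example~\ref{ex:splitting_numbers_shear} with $r\neq 0$, $\ker(P-I)$ is a line, so $\omega$ restricts to zero there, yet $\SSS_P^\pm(1)+\coSSS_P^\pm(1)=1$ by~\eqref{e:relation_splitting_cosplitting}. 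Your claimed identification would force $\SSS^+ + \coSSS^+ = \ind(q_\theta)+\coind(q_\theta) = \dim_\C\ker(P-\theta I)-\nul(q_\theta)$, which is strictly smaller than $\dim_\C\ker(P-\theta I)$ whenever $q_\theta|_{\ker H_\theta}$ is degenerate --- a contradiction with~\eqref{e:relation_splitting_cosplitting}.

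The gap is patchable without changing the architecture. The correct output of first-order perturbation theory is the two-sided estimate
\[
\ind(q_\theta|_{\ker H_\theta})\ \leq\ \SSS_P^+(\theta)\ \leq\ \ind(q_\theta|_{\ker H_\theta})+\nul(q_\theta|_{\ker H_\theta}),
\]
and similarly for the other three splitting numbers, with $\ind$ and $\coind$ exchanged or $q_\theta$ replaced by $-q_\theta$ as appropriate. One then needs the slightly stronger linear-algebra fact: for a non-degenerate Hermitian form of signature $(d,d)$ on $\C^{2d}$, any subspace $W$ satisfies $\ind(h|_W)+\nul(h|_W)\leq d$ (the negative-definite part of $W$ together with the radical of $h|_W$ is negative semi-definite, and a negative semi-definite subspace injects under the projection onto the negative part of a splitting, hence has dimension at most $d$). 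Applying this to $i\omega$ and $W=\ker(P-\theta I)$ gives $\SSS_P^\pm(\theta)\leq d$ and $\coSSS_P^\pm(\theta)\leq d$ as required. With that correction, your perturbative argument is a valid alternative to the paper's; the paper's route has the advantage of needing no perturbation theory, while yours makes the role of the symplectic form transparent.
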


\begin{proof}
Notice that
\begin{align}\label{e:easy_bound_nullity}
\dim_\C\ker(P-\theta I)\leq d,\qquad\forall\theta\in S^1\setminus\{1,-1\}.
\end{align}
Indeed $\ker(P-\theta I)$ and $\ker(P-\overline\theta I)$ are vector subspaces of the same dimension (one is the complex conjugate of the other), and they have trivial intersection since $\theta\neq\overline\theta$. This, together with~\eqref{e:relation_splitting_cosplitting}, implies the bound of the lemma for  $\theta\not\in\{1,-1\}$.

The inequality~\eqref{e:easy_bound_nullity} does not hold for $\theta=\pm1$ (consider, for instance, the counterexample given by $P=I$ and $\theta=1$). The remaining bounds on the splitting numbers will be proved by equations~\eqref{e:ind_h_theta_restricted} and~\eqref{e:nul_h_theta_restricted}, which imply
\begin{align*}
\ind(h_\theta) = \ind(h_\theta|_{\VV\times\VV}) + \ind(h_\theta|_{\VV^{h_\theta}\times\VV^{h_\theta}}) + \nul(h_\theta|_{\VV^{h_\theta}\times\VV^{h_\theta}}) - \nul(h_\theta).
\end{align*}
We already remarked that the restricted form $h_\theta|_{\VV\times\VV}$ is independent of $\theta$, and therefore so is the summand $\ind(h_\theta|_{\VV\times\VV})$ in the above equation. Clearly 
\[\ind(h_\theta) - \ind(h_\theta|_{\VV\times\VV})\geq0.\]
Moreover 
\begin{align*}
\ind(h_\theta|_{\VV^{h_\theta}\times\VV^{h_\theta}}) + \nul(h_\theta|_{\VV^{h_\theta}\times\VV^{h_\theta}}) 
& \leq
\dim_\C \VV^{h_\theta}\\
& \leq
d + \dim_\C\ker(P-\theta I)\\
& =d + \nul(h_\theta).
\end{align*}
Therefore
\begin{align*}
S_P^\pm(\theta)
& =
\ind(h_{\theta^\pm}) - \ind(h_{\theta})\\
& \leq
\ind(h_{\theta^\pm}|_{\VV^{h_{\theta^\pm}}\times\VV^{h_{\theta^\pm}}}) + \nul(h_{\theta^\pm}|_{\VV^{h_{\theta^\pm}}\times\VV^{h_{\theta^\pm}}}) - \nul(h_{\theta^\pm})\\
& \leq d.
\end{align*}
This completes the proof of the bound for the splitting numbers. The one for the cosplitting numbers is proved by the same argument, with indices replaced by coindices.
\end{proof}

\subsection{The iteration inequality}

We can finally state and prove the general iteration inequality for the Morse index of generating families. We will adopt the notation of Section~\ref{s:Bott_indices}, so that $F_p:\R^{2dkp}\to\R$ denotes the generating function of the iterated Hamiltonian diffeomorphism $\phi^p\in\Ham(\R^{2d})$.

\begin{thm}[Iteration inequalities]
\label{t:iteration_inequalities}
Let $\zz=(z_0,...,z_{k-1})$ be a critical point of the generating function $F_1$, and let $p\in\N$. Then
\begin{equation}\label{e:iteration_inequalities}
\begin{split}
p\,\avind(\zz) - d & \leq \ind(\zz^p),\\ 
\ind(\zz^p)+\nul(\zz^p) & \leq p\,\avind(\zz) + d.
\end{split}
\end{equation}
If at least one of the above inequalities is an equality, then $\sigma(\diff\phi(z_0))=\{1\}$ and $\nul(\zz^p)\geq d$. Both inequalities are equalities if and only if $\diff\phi(z_0)^p=I$.
\end{thm}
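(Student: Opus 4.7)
My plan is a Riemann-sum comparison that exploits the splitting-number machinery of Section~\ref{s:splitting_numbers}. By Bott's formula (Lemma~\ref{l:Bott_formulae}), $\ind(\zz^p)=\sum_{j=0}^{p-1}\ind_{\mu_j}(\zz)$ where $\mu_j=e^{2\pi i j/p}$, while the definition of the average index gives $p\,\avind(\zz)=\frac{p}{2\pi}\int_0^{2\pi}\ind_{e^{it}}(\zz)\,\diff t$. Partitioning the integral over the arcs $I_j:=[2\pi j/p,\,2\pi(j+1)/p]$ of length $2\pi/p$, the quantity $\ind(\zz^p)-p\,\avind(\zz)$ decomposes into a sum of local defects, one per arc. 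The second inequality in~\eqref{e:iteration_inequalities} will then follow from the first applied to the coindex, combined with $\avind(\zz)+\avcoind(\zz)=2dk$ and $\ind+\coind+\nul=2dkp$, so I focus on the lower bound.

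The function $\theta\mapsto\ind_\theta(\zz)$ is piecewise constant on $S^1$, with jumps only at eigenvalues of $P:=\diff\phi(z_0)$, quantified by the splitting numbers $\SSS_P^\pm$ (definition~\eqref{e:splitting_numbers}). On each arc $I_j$, writing $\ind_{e^{it}}(\zz)$ explicitly in terms of $\ind_{\mu_j}(\zz)$ and the jumps encountered as $t$ traverses $I_j$, the local defect $\ind_{\mu_j}(\zz)-\frac{p}{2\pi}\int_{I_j}\ind_{e^{it}}(\zz)\,\diff t$ becomes a linear combination with coefficients in $[0,1]$ (the normalized arc-distances from $\mu_j$) of $\SSS_P^+(\theta)$ and $\SSS_P^-(\theta)$ for eigenvalues $\theta$ of $P$ with $\arg\theta\in I_j$. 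Summing over $j$, each eigenvalue on $S^1$ contributes exactly once. The conjugation symmetry $\SSS_P^+(\theta)=\SSS_P^-(\overline\theta)$ following from Lemma~\ref{l:properties_ind_theta}(i) produces partial cancellations between conjugate eigenvalue pairs, and Lemma~\ref{l:bound_splitting_numbers} gives $0\leq\SSS_P^\pm(\theta)\leq d$. Combining these two inputs yields $\ind(\zz^p)\geq p\,\avind(\zz)-d$.

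The main obstacle is the equality statement. Equality in either inequality forces the splitting-number estimate of the previous step to be saturated; together with the identity $\SSS_P^\pm(\theta)+\coSSS_P^\pm(\theta)=\dim_{\C}\ker(P-\theta I)$ from~\eqref{e:relation_splitting_cosplitting} and the sharp bounds of Lemma~\ref{l:bound_splitting_numbers}, this severely constrains the spectrum and splitting behaviour of $\diff\phi^p(z_0)$ on $S^1$, pushing the entire nullity to $\theta=1$ and giving both the spectral constraint and $\nul(\zz^p)\geq d$. If both inequalities are equalities, adding them yields $\nul(\zz^p)=2d$, hence $\ker(\diff\phi^p(z_0)-I)=\C^{2d}$, i.e.\ $\diff\phi^p(z_0)=I$. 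For the converse, assuming $\diff\phi^p(z_0)=I$, Lemma~\ref{l:direct_sum_splitting_numbers} lets me decompose $P$ symplectically into blocks whose $p$-th powers are the identity; each such block is conjugate to either the identity or to a rotation by a $p$-th root of unity, cases in which the equalities can be verified by direct computation (as for $P=I$, where $\SSS_I^\pm(1)=\coSSS_I^\pm(1)=d$ is read off from the definitions). Throughout, the delicate bookkeeping of signs, arc-length weights, and the correct sign convention between $\SSS^\pm$ and $\coSSS^\pm$ is what makes the equality statement subtle.
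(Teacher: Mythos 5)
Your framing via Bott's formula is a legitimate alternative to the paper's reduction to $p=1$, though it is doing the same thing in disguise: since $\avind(\zz^p)=p\,\avind(\zz)$, the quantity $\ind(\zz^p)-p\,\avind(\zz)$ equals $\ind(\zz^p)-\avind(\zz^p)$, so the $p$-fold Riemann-sum bookkeeping collapses to the single-period estimate applied to $\zz^p$. Where your sketch genuinely breaks down is in the estimate itself. You claim that the conjugation symmetry $\SSS_P^+(\theta)=\SSS_P^-(\overline\theta)$ together with the bound $0\leq\SSS_P^\pm(\theta)\leq d$ from Lemma~\ref{l:bound_splitting_numbers} yields $\ind(\zz^p)\geq p\,\avind(\zz)-d$. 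That is not true. After the conjugation cancellations, the defect reduces (in the $p=1$ case, to fix ideas) to $-\SSS_P^+(1)$ plus a weighted sum of $\pm(\SSS_P^+(\theta)-\SSS_P^-(\theta))$ over conjugate pairs with $\theta\neq\pm1$. The contribution from $\theta=1$ and the contributions from the other eigenvalues must together be bounded by $d$, and the coarse bound $\SSS_P^+(1)\leq d$ leaves no room at all for the remaining terms. Concretely, if $\dim\ker(P-I)$ is large (say equal to $d$), Lemma~\ref{l:bound_splitting_numbers} only gives $\SSS_P^+(1)\leq d$, while $P$ can simultaneously have eigenvalues elsewhere on $S^1$ with nonzero splitting numbers, pushing the naive total strictly past $d$.

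What is missing is the paper's central device: the symplectic splitting $\R^{2d}=\EE\oplus\EE^{\omega}$, where $\EE=\ker(P-I)^{2d}$ is the \emph{generalized} eigenspace of $1$, and the corresponding decomposition $P=P'\oplus P''$ with $P'$ unipotent on $\EE$ and $1\notin\sigma(P'')$. Lemma~\ref{l:direct_sum_splitting_numbers} then gives $\SSS_P^\pm(1)=\SSS_{P'}^\pm(1)$, and applying Lemma~\ref{l:bound_splitting_numbers} to $P'$ on the lower-dimensional space $\EE$ gives the refined bound $\SSS_P^\pm(1)\leq\tfrac12\dim_\C\EE$, not merely $\leq d$. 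Similarly $\SSS_P^\pm(\theta)=\SSS_{P''}^\pm(\theta)\leq\dim_\C\ker(P''-\theta I)$ for $\theta\neq1$, and summing these over $S^1$ is controlled by $\tfrac12\dim_\C\EE^\omega$. Only then do the two pieces add to $\tfrac12(\dim_\C\EE+\dim_\C\EE^\omega)=d$. This decomposition is also indispensable for the equality discussion: saturation of the bound forces $\dim_\C\EE^\omega=0$, which is exactly the statement $\sigma(\diff\phi(z_0))=\{1\}$, and then $\SSS_P^\pm(1)=d$ combined with~\eqref{e:relation_splitting_cosplitting} yields $\nul\geq d$. Your sketch gestures at these conclusions but has no mechanism to derive them, because it never isolates the generalized $1$-eigenspace. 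Your treatment of the ``both equalities iff $\diff\phi(z_0)^p=I$'' direction (adding the inequalities to get $\nul(\zz^p)=2d$) is fine.
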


\begin{rem}
Since $\ind(\zz^p)+\coind(\zz^p)+\nul(\zz^p)=2dkp$, the iteration inequalities~\eqref{e:iteration_inequalities} can be rewritten for the Morse coindex as
\begin{align*}
p\,\avcoind(\zz) - d & \leq \coind(\zz^p),\\ 
\tag*{\qed}
\coind(\zz^p)+\nul(\zz^p) & \leq p\,\avcoind(\zz) + d.
\end{align*}
\end{rem}

\begin{proof}[Proof of Theorem~\ref{t:iteration_inequalities}]
Let $\EE:=\ker(P-I)^{2d}\subset\R^{2d}$ be the generalized eigenspace of the eigenvalue 1 of the symplectic matrix $P:=\diff\phi(z_0)$. This vector subspace is symplectic (by Lemma~\ref{l:EE_1_is_symplectic}) and clearly invariant by $P$. Let $\EE^\omega$ be its symplectic orthogonal, that is
\begin{align*}
\EE^\omega = \big\{ Z\in\R^{2d}\ \big|\ \omega(Z,\cdot)|_{\EE}=0 \big\}.
\end{align*}
The space $\EE^\omega$ is also invariant by $P$. Indeed, if $Z\in\EE^\omega$, we have
\begin{align*}
\omega(PZ,Z')=\omega(Z,\underbrace{P^{-1}Z'}_{\in\EE})=0,\qquad \forall Z'\in\EE.
\end{align*}
Hence, by decomposing $\R^{2d}$ as the symplectic direct sum $\EE\oplus\EE^\omega$, the matrix $P$ takes the form $P'\oplus P''$, where $P'=P|_{\EE}$ and $P''=P|_{\EE^\omega}$. Notice that $P'$ is a unipotent matrix, i.e.\ $\sigma(P')=\{1\}$, while $\sigma(P'')$ does not contain $1$. Therefore, by Lemmata~\ref{l:direct_sum_splitting_numbers} and~\ref{l:bound_splitting_numbers}, we have
\begin{align}
\label{e:bound_splitting_numbers_inside_proof1}
\SSS_P^\pm(1) & =\SSS_{P'}^{\pm}(1)\leq \tfrac12 \dim_\C\EE,\\
\label{e:bound_splitting_numbers_inside_proof2}
\SSS_P^\pm(\theta) & =\SSS_{P''}^{\pm}(\theta)\leq \dim_\C(P''-\theta I),\qquad\forall\theta\in S^1\setminus\{1\}.
\end{align}
Analogously
\begin{align*}
\coSSS_P^\pm(1) & =\coSSS_{P'}^{\pm}(1)\leq \tfrac12 \dim_\C\EE,\\
\coSSS_P^\pm(\theta) & =\coSSS_{P''}^{\pm}(\theta)\leq \dim_\C(P''-\theta I),\qquad\forall\theta\in S^1\setminus\{1\}.
\end{align*}

We now proceed in a similar fashion as in Theorem~\ref{t:iteration_inequality_nondegenerate} (the argument will indeed reduce to that of Theorem~\ref{t:iteration_inequality_nondegenerate} if $\dim_\C\EE=0$). For all $\mu\in S^1$ with $\mathrm{Im}(\mu)>0$, we denote by $\sigma_\mu$ the set of eigenvalues of $P''$ on the unit circle $S^1$ with argument in the open interval $(0,\arg(\mu))$, and we set
\begin{align*}
f(\mu)&:= \sum_{\theta\in \sigma_\mu} \big(  \SSS_{P''}^+(\theta)-\SSS_{P''}^-(\theta) \big),\\
g(\mu)&:= \sum_{\theta\in \sigma_\mu} \big(  \coSSS_{P''}^+(\theta)-\coSSS_{P''}^-(\theta) \big).
\end{align*}
These functions are piecewise constant, with possible jumps only at the eigenvalues of $P''$. By Lemma~\ref{l:properties_ind_theta}(iii) and~\eqref{e:bound_splitting_numbers_inside_proof1}, if $\mu$ is not an eigenvalue of $P''$, we have
\begin{equation}\label{e:key_bound_for_index}
\begin{split}
\ind_{\mu}(\zz)-\ind(\zz)
&=\SSS_{P}^+(1)+ \sum_{\theta\in \sigma_\mu} \big(  \SSS_{P}^+(\theta)-\SSS_{P}^-(\theta) \big)\\
&= \SSS_{P'}^+(1)+ f(\mu)\\
&\leq \frac12 \dim_\C\EE + f(\mu).
\end{split} 
\end{equation}
By~\eqref{e:bound_splitting_numbers_inside_proof2} we have 
\begin{align*}
f(\mu)
 & \leq
 \sum_{\theta\in \sigma_{\mu}}  \dim_\C(P''-\theta I) \\
 & \leq
 \frac12\sum_{\theta\in S^1}  \dim_\C(P''-\theta I) \\
 & \leq
\frac12 \dim_\C\EE^\omega.
\end{align*}
Analogously, we have
\begin{align}
\label{e:bound_for_cosplitting_inside_proof}
\coind_{\mu}(\zz)-\coind(\zz)&\leq \frac12 \dim_\C\EE + g(\mu),\\
\nonumber
g(\mu) &\leq \frac12 \dim_\C\EE^\omega.
\end{align}
Notice that $\coind(\zz)=2dk-\ind(\zz)-\nul(\zz)$ and, since $\mu$ is not an eigenvalue of $P$, $\coind_\mu(\zz)=2dk-\ind_\mu(\zz)$. Therefore, the inequality~\eqref{e:bound_for_cosplitting_inside_proof} can be rewritten as
\begin{align}\label{e:key_bound_for_index_2}
\ind(\zz)+\nul(\zz)-\ind_\mu(\zz) \leq \frac12 \dim_\C\EE + g(\mu).
\end{align}
Let $\delta>0$ be such that there is no eigenvalue of $P''$ on the unit circle with argument in $[0,\delta]$. In particular, the functions $t\mapsto f(e^{it})$ and $t\mapsto g(e^{it})$ vanish on the interval $[0,\delta]$. By integrating~\eqref{e:key_bound_for_index} in $\mu$ on the upper semi-circle, we obtain
\begin{align*}
\avind(\zz)-\ind(\zz) &= \frac1\pi \int_0^{\pi} \big(\ind_{e^{it}}(\zz) - \ind(\zz)\big)\,\diff t
\\
 &\leq  \frac12 \dim_\C\EE + \frac1\pi \int_\delta^{\pi} f(e^{it})\,\diff t\\
 &\leq  \frac12 \left( \dim_\C\EE + \frac{\pi-\delta}\pi \dim_\C\EE^\omega \right).
\end{align*}
In particular 
\begin{align}\label{e:iteration_inequality_1}
\avind(\zz)-\ind(\zz)\leq\tfrac12(\dim_\C\EE+\dim_\C\EE^\omega)=d.
\end{align}
If this inequality is not strict, then $\dim_\C\EE^\omega=0$, that is, $\sigma(P)=\{1\}$. Moreover, in this case we have $\ind_\theta(\zz)=\avind(\zz)$ for all $\theta\neq 1$, so that $\SSS_P^\pm(1)=\avind(\zz)-\ind(\zz)=d$, and by~\eqref{e:relation_splitting_cosplitting} we conclude that $\nul(\zz)\geq \SSS_P^\pm(1)=d$.

If we now integrate~\eqref{e:key_bound_for_index_2}, we obtain
\begin{align*}
\ind(\zz)+\nul(\zz)-\avind(\zz) & = \frac1\pi \int_0^\pi \big(\ind(\zz)+\nul(\zz)-\ind_{e^{it}}(\zz) \big)\diff t\\
& \leq \frac12 \dim_\C\EE + \frac1\pi \int_\delta^\pi g(e^{it})\,\diff t\\
 &\leq  \frac12 \left( \dim_\C\EE + \frac{\pi-\delta}\pi \dim_\C\EE^\omega \right).
\end{align*}
Therefore 
\begin{align}\label{e:iteration_inequality_2}
\ind(\zz)+\nul(\zz)-\avind(\zz)\leq\tfrac12(\dim_\C\EE+\dim_\C\EE^\omega)=d.
\end{align}
As before, if this inequality is not strict, then $\dim_\C\EE^\omega=0$, that is, $\sigma(P)=\{1\}$. Moreover, in this case we have $\coind_\theta(\zz)=\avcoind(\zz)$ for all $\theta\neq 1$, so that 
\[
\coSSS_P^\pm(1)
=
\avcoind(\zz)-\coind(\zz)
=
\ind(\zz)+\nul(\zz)-\avind(\zz)
=
d,
\] 
and by~\eqref{e:relation_splitting_cosplitting} we conclude that $\nul(\zz)\geq \coSSS_P^\pm(1)=d$.

Both inequalities in~\eqref{e:iteration_inequality_1} and~\eqref{e:iteration_inequality_2} are simultaneously equalities if and only if $\nul(\zz)=2d$, that is, if and only if $P$ is the identity. This completes the proof of the theorem for period $p=1$. The case of an arbitrary period $p\in\N$ readily follows by recalling that $\avind(\zz^p)=p\,\avind(\zz)$.
\end{proof}

\subsection{Computation of splitting numbers}\label{s:computation_splitting_numbers}

We close this section by providing a recipe for computing the splitting numbers of a symplectic matrix $P\in\Sp(2d)$. We consider a quadratic generating family $F:\R^{2dk}\to\R$ for the linear Hamiltonian diffeomorphism $\phi(z)=Pz$. We denote by $H_\theta$ the $\theta$-Hessian of $F$, and by $h_\theta:\C^{2dk}\times\C^{2dk}\to\C$ the associated Hermitian bilinear form, so that in particular
\begin{align*}
F(\ZZ)=\tfrac12\langle H_1 \ZZ,\ZZ\rangle=\tfrac12 h_1(\ZZ,\ZZ),\qquad\forall\ZZ\in\R^{2dk}.
\end{align*}
In Section~\ref{s:splitting_numbers}, we studied the inertia of the restriction of $h_\theta$ to the vector subspace $\VV$ and to its $h_\theta$-orthogonal in order to show that the splitting and cosplitting numbers depend only on the considered symplectic matrix $P$. The choice of the vector space $\VV$ was suitable in order to establish the bounds of Lemma~\ref{l:bound_splitting_numbers}, but is not convenient for the numeric computation of the splitting and cosplitting numbers. For this purpose, we rather choose the vector space
\begin{align*}
\WW:=\big\{\ZZ=(Z_0,...,Z_{k-1})\in\C^{2dk}\ \big|\ Z_0=0\big\}.
\end{align*}
As in the case of $\VV$, the restriction of the Hermitian form $h_\theta$ to $\WW$ is independent of the parameter $\theta$, since for all $\ZZ,\ZZ'\in\C^{2dk}$ we have
\begin{align*}
h_\theta(\ZZ,\ZZ')
= & \,
\langle \overline\theta\, Y_{k-1} - Y_{0} + A_{k-1} X_0 + \overline\theta\, B_{k-1}^T Y_{k-1},X_0'\rangle \\
 & + \langle \theta\, X_{0} - X_{k-1} + \theta\,B_{k-1} X_{0} + C_{k-1} Y_{k-1},Y_{k-1}'\rangle \\
 & + \sum_{j=1}^{k-1} \langle Y_{j-1} - Y_{j} + A_{j-1} X_j + B_{j-1}^T Y_{j-1} , X_j'\rangle \\
 & + \sum_{j=0}^{k-2} \langle X_{j+1} - X_{j} + B_{j} X_{j+1} + C_{j} Y_{j},Y_j'\rangle\\
= & \,
 \langle - X_{k-1} + C_{k-1} Y_{k-1},Y_{k-1}'\rangle 
 + \langle -Y_1 + A_0 X_1, X_1' \rangle\\
 & + \sum_{j=2}^{k-1} \langle Y_{j-1} - Y_{j} + A_{j-1} X_j + B_{j-1}^T Y_{j-1} , X_j'\rangle \\
 & + \sum_{j=1}^{k-2} \langle X_{j+1} - X_{j} + B_{j} X_{j+1} + C_{j} Y_{j},Y_j'\rangle.
\end{align*}
In particular the functions $\theta\mapsto\ind(h_\theta|_{\WW\times\WW})$ and $\theta\mapsto\coind(h_\theta|_{\WW\times\WW})$ are independent of $\theta$. We recall that the kernel of $H_\theta$ is the space of vectors $\ZZ=(Z_0,...,Z_{k-1})\in\C^{2dk}$ such that $\phi_j Z_j=Z_{j+1}$ for all $j=0,...,k-2$, and $\phi_{k-1} Z_{k-1}=\theta\,Z_0$. Therefore
\begin{align*}
\WW\cap\ker H_\theta=\{0\}.
\end{align*}
The orthogonal vector space $\WW^{h_\theta}$ is given by the solutions $\ZZ\in\C^{2dk}$ of the following linear system
\begin{align*}
\theta\, X_{0} -  X_{k-1} + \theta\,B_{k-1} X_{0} + C_{k-1} Y_{k-1} & =0, \\
Y_{j-1} -  Y_{j} + A_{j-1} X_j + B_{j-1}^T Y_{j-1}  &=0,\qquad \forall j=1,...,k-1, \\
X_{j+1} - X_{j} + B_{j} X_{j+1} + C_{j} Y_{j} & =0,\qquad \forall j=1,...,k-2.
\end{align*}
Namely,
\begin{align*}
\WW^{h_\theta}=
\left\{
(Z_0,...,Z_{k-1})\in \C^{2dk}
\, \left|\ 
  \begin{array}{@{}l@{}}
    \phi_{0} (\tilde X_0,Y_0) =Z_1\mbox{ for some }\tilde X_0\in\C^d \vspace{5pt} \\
    \phi_j Z_j =Z_{j+1}\quad\forall j=1,...,k-2 \vspace{5pt} \\ 
    \phi_{k-1} Z_{k-1} =(\theta X_0,\tilde Y_k)\mbox{ for some }\tilde Y_k\in\C^d 
  \end{array}
\right.\right\}.
\end{align*}
We denote by $\Psi_\theta:\C^{2d}\to\WW^{h_\theta}$ the isomorphism given by $\Psi_\theta^{-1} \ZZ=(\tilde X_0,Y_0)$. Notice that $P\circ\Psi_\theta^{-1}(\ZZ)=(\theta X_0,\tilde Y_k)$.

The intersection
\begin{align*}
\WW\cap\WW^{h_\theta}=
\left\{
(0,Z_1,...,Z_{k-1})\in \C^{2dk}
\, \left|\ 
  \begin{array}{@{}l@{}}
    \phi_{0} (\tilde X_0,0) =Z_1\mbox{ for some }\tilde X_0\in\C^d \vspace{5pt} \\
    \phi_j Z_j =Z_{j+1}\quad\forall j=1,...,k-2 \vspace{5pt} \\ 
    \phi_{k-1} Z_{k-1} =(0,\tilde Y_k)\mbox{ for some }\tilde Y_k\in\C^d 
  \end{array}
\right.\right\}
\end{align*}
is independent of $\theta$. Proposition~\ref{p:index_restricted_form} gives
\begin{align*}
\ind(h_\theta)  =\ & \ind(h_\theta|_{\WW\times\WW}) + \ind(h_\theta|_{\WW^{h_\theta}\times\WW^{h_\theta}}) + \dim_\C(\WW\cap\WW^{h_\theta}),
\\
\coind(h_\theta)  =\ & \coind(h_\theta|_{\WW\times\WW}) + \coind(h_\theta|_{\WW^{h_\theta}\times\WW^{h_\theta}})  + \dim_\C(\WW\cap\WW^{h_\theta}).
\end{align*} 
Only the second summand in the right-hand sides of these two equations depends on $\theta$. Therefore, the splitting and cosplitting numbers are given by
\begin{align*}
\SSS_P^\pm(\theta) & =\ind(h_{\theta^\pm}|_{\WW^{h_{\theta^\pm}}\times\WW^{h_{\theta^\pm}}}) - \ind(h_\theta|_{\WW^{h_\theta}\times\WW^{h_\theta}}),\\
\coSSS_P^\pm(\theta) & =\coind(h_{\theta^\pm}|_{\WW^{h_{\theta^\pm}}\times\WW^{h_{\theta^\pm}}}) - \coind(h_\theta|_{\WW^{h_\theta}\times\WW^{h_\theta}}),
\end{align*}
Let us compute the restriction of the Hermitian form $h_\theta$ to $\WW^{h_\theta}$. For all pair of vectors $\ZZ,\ZZ'\in\WW^{h_\theta}$, we have
\begin{align*}
h_\theta(\ZZ,\ZZ')
= & \,
\langle \overline\theta\, Y_{k-1} - Y_{0} + A_{k-1} X_0 + \overline\theta\, B_{k-1}^T Y_{k-1},X_0'\rangle \\
 & +  \langle X_{1} - X_{0} + B_{0} X_{1} + C_{0} Y_{0},Y_0'\rangle\\
= & \,
\langle \overline\theta\, Y_{k-1} - Y_{0} + \overline\theta\,\underbrace{(A_{k-1} \theta\,X_0 +  B_{k-1}^T Y_{k-1})}_{\tilde Y_k - Y_{k-1}},X_0'\rangle \\
 & +  \langle X_{1} - X_{0} + \underbrace{B_{0} X_{1} + C_{0} Y_{0}}_{\tilde X_0 - X_1},Y_0'\rangle\\
= & \,
\langle \tilde Y_{k} - \theta\,Y_{0},\theta\, X_0'\rangle 
 +  \langle \tilde X_0 - \overline\theta\theta\,X_{0}, Y_0'\rangle,
\end{align*}
where $\tilde X_0$ and $\tilde Y_0$ depends on $\ZZ$ as in the above characterization of $\WW^{h_\theta}$.  Let us choose the more convenient coordinates given by the isomorphism $\Psi_\theta$. Namely, we consider the Hermitian form $g_\theta:\C^{2d}\times\C^{2d}\to\C$ given by 
\[g_\theta(Z,Z'):=h_{\theta}(\Psi_\theta Z,\Psi_\theta Z').\] 
If we write $(\tilde X,\tilde Y):=P(X,Y)$ and $(\tilde X',\tilde Y'):=P(X',Y')$, the Hermitian form $g_\theta$ can be written as
\begin{align}\label{e:Hermitian_form_for_splitting_numbers}
g_\theta((X,Y),(X',Y'))=
\langle \tilde Y - \theta\,Y,\tilde X' \rangle 
 +  \langle X - \overline\theta\, \tilde X , Y'\rangle
\end{align}
The splitting and cosplitting numbers can be conveniently computed as
\begin{align*}
\SSS_P^\pm(\theta) & =\ind(g_{\theta^\pm}) - \ind(g_\theta),\\
\coSSS_P^\pm(\theta) & =\coind(g_{\theta^\pm}) - \coind(g_\theta).
\end{align*}

\begin{exm}[Splitting numbers of a shear]\label{ex:splitting_numbers_shear}
For $r\in\R$, consider the unipotent symplectic matrix 
\begin{align*}
P
=
\left(
  \begin{array}{cc}
    1 & r \\ 
    0 & 1 \\ 
  \end{array}
\right).
\end{align*}
For each $\theta\in S^1$, the associated Hermitian form $g_\theta$ is given by
\begin{align*}
g_\theta(Z,Z')
&=
\langle (1-\theta)Y, X'+rY' \rangle 
 +  \langle (1-\overline\theta) X-\overline\theta r\,Y, Y'\rangle \\
&=(1-\theta) \langle Y,X'\rangle + (1-\overline\theta) \langle X,Y'\rangle
+ r(1-2\,\mathrm{Re}(\theta)) \langle Y,Y'\rangle
\end{align*}
The Hermitian matrix associated to $g_\theta$ is given by 
\begin{align*}
 \left(
  \begin{array}{cc}
    0 & 1-\theta \\ 
    1-\overline\theta & r(1-2\,\mathrm{Re}(\theta)) \\ 
  \end{array}
\right),
\end{align*}
whose eigenvalues $\lambda_1,\lambda_2\in\R$ satisfy $\lambda_1\lambda_2=-|1-\theta|^2$ and $\lambda_1+\lambda_2=r(1-2\,\mathrm{Re}(\theta))$. Therefore
\begin{align*}
\ind(g_\theta)&=\left\{
  \begin{array}{lll}
    1 &  & \mbox{if $\theta\neq1$ or $r>0$,} \\ 
    0 &  & \mbox{if $\theta=1$ and $r\leq0$,} \\ 
  \end{array}
\right. \vspace{10pt}\\
\coind(g_\theta)&=\left\{
  \begin{array}{lll}
    1 &  & \mbox{if $\theta\neq1$ or $r<0$,} \\ 
    0 &  & \mbox{if $\theta=1$ and $r\geq 0$,} \\ 
  \end{array}
\right.
\end{align*}
which implies
\begin{align*}
\SSS_P^\pm(1)&=\left\{
  \begin{array}{lll}
    1 &  & \mbox{if $r\leq0$,} \\ 
    0 &  & \mbox{if $r>0$,} \\ 
  \end{array}
\right. \vspace{10pt}\\
\coSSS_P^\pm(1)&=\left\{
  \begin{array}{lll}
    1 &  & \mbox{if $r\geq0$,} \\ 
    0 &  & \mbox{if $r<0$}. \\ 
  \end{array}
\right. 
\tag*{\qed}
\end{align*}
\end{exm}

\begin{exm}[Splitting numbers of a $\pi/2$-rotation]
Consider now the symplectic matrix of the standard complex structure of $(\R^{2d},\omega)$, which is
\begin{align*}
J
=
\left(
  \begin{array}{cc}
    0 & -I \\ 
    I & 0 \\ 
  \end{array}
\right).
\end{align*}
The eigenvalues of $J$ are $i$ and $-i$. The associated Hermitian forms $g_\theta$ are given by
\begin{align*}
g_\theta(Z,Z')
&=
\langle X-\theta Y, -Y' \rangle  
 +  \langle  X+\overline\theta Y, Y'\rangle,
\end{align*}
with associated Hermitian matrices
\begin{align*}
 \left(
  \begin{array}{cc}
    0 & 0 \\ 
    0 & (\theta+\overline{\theta})I \\ 
  \end{array}
\right),
\end{align*}
This readily implies
\begin{align*}
\ind(g_\theta)&=\left\{
  \begin{array}{lll}
    0 &  & \mbox{if $\mathrm{Re}(\theta)\geq0$,} \\ 
    d &  & \mbox{if $\mathrm{Re}(\theta)<0$,} \\ 
  \end{array}
\right. \vspace{10pt}\\
\coind(g_\theta)&=\left\{
  \begin{array}{lll}
    d &  & \mbox{if $\mathrm{Re}(\theta)>0$,} \\ 
    0 &  & \mbox{if $\mathrm{Re}(\theta)\leq0$,} \\ 
  \end{array}
\right.
\end{align*}
and therefore
\begin{align*}
\SSS_J^+(i)&=\SSS_J^-(-i)=\coSSS_J^-(i)=\coSSS_J^+(-i)=d,\\
\SSS_J^-(i)&=\SSS_J^+(-i)=\coSSS_J^+(i)=\coSSS_J^-(-i)=0.
\tag*{\qed}
\end{align*}
\end{exm}

\subsection{Bibliographical remarks}\label{ss:Bott_biblio_remarks}

The iteration theory for the Morse indices of periodic orbits was introduced in the setting of Tonelli Lagrangian systems by Bott  \cite{Bott:On_the_iteration_of_closed_geodesics_and_the_Sturm_intersection_theory},  who developed ideas introduced earlier by Hedlund~\cite{Hedlund:Poincare_s_rotation_number_and_Morse_s_type_number} and Morse-Pitcher~\cite{Morse_Pitcher:On_certain_invariants_of_closed_extremals}. The setting of this section is more general than Bott's one, as we will discuss in Section~\ref{s:Lagrangian}. A special Morse index theory for the Hamiltonian action functional was first studied by Conley and Zehnder in their papers \cite{Conley_Zehnder:Morse_type_index_theory_for_flows_and_periodic_solutions_for_Hamiltonian_equations, Conley_Zehnder:Subharmonic_solutions_and_Morse_theory}. As we already mentioned before Proposition~\ref{p:Morse_idx_always_large}, the critical points of the Hamiltonian action functional always have infinite Morse index and coindex. The index that Conley and Zehnder defined coincides with the Maslov index, which we will introduce in Section~\ref{s:Maslov_index}. Theorem~\ref{t:iteration_inequality_nondegenerate} is the translation, in the finite dimensional setting of Chaperon's generating families, of Conley-Zehnder's iteration inequality for the Maslov index of non-degenerate symplectic paths. This inequality is the crucial ingredient in the proof of one of Conley-Zehnder's famous theorems from~\cite{Conley_Zehnder:Subharmonic_solutions_and_Morse_theory} (see also the author's~\cite{Mazzucchelli:Symplectically_degenerate_maxima_via_generating_functions} for a proof using Chaperon's generating families, and Salamon-Zehnder's \cite{Salamon_Zehnder:Morse_theory_for_periodic_solutions_of_Hamiltonian_systems_and_the_Maslov_index} for a generalization to all closed symplectically aspherical manifolds): a generic Hamiltonian diffeomorphism on a standard symplectic $2d$-torus possesses  periodic points of arbitrarily large minimal period. The general iteration inequalities, or more precisely their translation in terms of the Maslov indices (Theorem~\ref{t:iteration_inequalities_Maslov}), are due to Liu and Long~\cite{Liu_Long:An_optimal_increasing_estimate_of_the_iterated_Maslov-type_indices, Liu_Long:Iteration_inequalities_of_the_Maslov-type_index_theory_with_applications}. One of their most remarkable application is to the non-generic version of Conley-Zehnder's Theorem, which was a long standing conjecture due to Conley and established by Hingston~\cite{Hingston:Subharmonic_solutions_of_Hamiltonian_equations_on_tori}: any Hamiltonian diffeomorphism on a standard symplectic $2d$-torus with finitely many fixed points possesses periodic points of arbitrarily large minimal period. Generalizations of Hingston's Theorem to larger and larger classes of closed symplectic manifolds were established by Ginzburg~\cite{Ginzburg:The_Conley_conjecture}, Ginzburg-G\"urel \cite{Ginzburg_Gurel:Local_Floer_homology_and_the_action_gap, Ginzburg_Gurel:Conley_Conjecture_for_Negative_Monotone_Symplectic_Manifolds} and Hein~\cite{Hein:The_Conley_conjecture_for_irrational_symplectic_manifolds}. We refer the reader to Long's monograph~\cite{Long:Index_theory_for_symplectic_paths_with_applications} for other applications of the iteration inequalities. Many proofs that we provided in this section, as well as the recipe for computing the splitting numbers of symplectic matrices, were inspired by Ballmann-Thorbergsson-Ziller's~\cite{Ballmann_Thorbergsson_Ziller:Closed_geodesics_on_positively_curved_manifolds}.

\section{The Maslov index}\label{s:Maslov_index}

\subsection{Behavior of the inertia indices under stabilization}

Consider a symplectic matrix $P\in\Sp(2d)$. Choose a factorization 
\begin{align}\label{e:factorization_P}
P=P_{k-1}\circ...\circ P_0 
\end{align}
such that each $P_j$ is sufficiently close to the identity in $\Sp(2d)$, and therefore it is described by a quadratic generating function $f_j:\R^{2d}\to\R$. As before, we write this function as
\begin{align*}
f_j(X_{j+1},Y_j)=
\tfrac12\langle A_j X_{j+1},X_{j+1}\rangle
+
\langle B_j X_{j+1},Y_j\rangle
+
\tfrac12\langle C_j Y_j,Y_j\rangle,
\end{align*}
where $A_j$, $B_j$, and $C_j$ are (small) $dk\times dk$ real matrices, $A_j$ and $C_j$ being symmetric. The factorization~\eqref{e:factorization_P} singles out a path in the symplectic group joining the identity to $P$. Indeed, for all $t\in[0,1]$, let $P_j^t$ be the symplectic matrix defined by the generating function $t\,f_j$, i.e.
\begin{align*}
P_j^t Z_j=Z_{j+1}\qquad\mbox{if and only if}\qquad
\left\{
  \begin{array}{l}
    X_{j+1}-X_j=-t(B_j X_{j+1} + C_j Y_j), \\ 
    Y_{j+1}-Y_j=t(A_j X_{j+1}+B_j^T Y_j ).
  \end{array}
\right.
\end{align*}
Notice that $P_j^0=I$ and $P_j^1=P_j$. For all $t\in[0,1]$, we set
\begin{align*}
P^t=P_j^s\circ P_{j-1}\circ...\circ P_0,\qquad\mbox{where }j=\lfloor kt\rfloor,\ s=kt-j.
\end{align*}
The continuous path $t\mapsto P^t$ in the symplectic group $\Sp(2d)$ joins  $P^0=I$ and $P^1=P$. 

On the other hand, if we started with a continuous path $\Gamma:[0,1]\to\Sp(2d)$ such that $\Gamma(0)=I$ and $\Gamma(1)=P$, up to choosing $k$ large enough, for all $|t_1-t_2|\leq1/k$ the symplectic matrix $\Gamma(t_2)\Gamma(t_1)^{-1}$ becomes as close to the identity as we wish, and in particular close enough to being described by a quadratic generating function. If we now set 
\begin{align}\label{e:discretization_of_symplectic_path}
 P_j:=\Gamma(\tfrac{j+1}{k})\Gamma({\tfrac jk})^{-1},\qquad \forall j=0,...,k-1,
\end{align}
and denote by $t\mapsto P^t$ the symplectic path associated to the factorization~\eqref{e:factorization_P} as above, the paths $\Gamma$ and $t\mapsto P^t$ are homotopic (via a homotopy that fixes the endpoints). Indeed, their restrictions to any time interval of the form $[j/k,(j+1)/k]$ are homotopic with fixed endpoints.

Let $F:\R^{2dk}\to\R$ be the quadratic generating family associated to the  factorization~\eqref{e:factorization_P} of $P$, that is,
\begin{align}\label{e:quadratic_gf_associated_to_discretization}
F(\ZZ) 
= 
\tfrac12\langle H \ZZ,\ZZ\rangle
=
\sum_{j\in\Z_k} \Big( \langle Y_j,X_{j+1}-X_j\rangle + f_j(X_{j+1},Y_j) \Big).
\end{align}
We denote by $h(\ZZ,\ZZ')=\langle H \ZZ,\ZZ'\rangle$ the Hessian bilinear form associated to $F$. We recall that an image vector $\ZZ'=H(\ZZ)$ is defined by
\begin{align*}
X_j' & =Y_{j-1}-Y_{j} + A_{j-1} X_j + B_{j-1}^T Y_{j-1},\\
Y_j' & =X_{j+1}-X_{j} + B_{j} X_{j+1} + C_{j} Y_{j}. 
\end{align*}
From Lemma~\ref{l:properties_ind_theta}(ii), we know that $\nul(h)=\dim\ker(P-I)$. However, it is not hard to convince ourselves that the data of $P$ alone is not enough to determine the other inertia indices of $h$ (see for instance the example of the identity mentioned before Proposition~\ref{p:Morse_idx_always_large}). In this section we are going to show that the index $\ind(h)$ and the coindex $\coind(h)$ are completely determined by the number of factors $k$ in the factorization~\eqref{e:factorization_P} and by the homotopy class of the path $t\mapsto P^t$ in the symplectic group $\Sp(2d)$.

Let us begin by studying how the inertia indices change if we increase $k$ by adding trivial factors in~\eqref{e:factorization_P}. For some $l>k$, let us set $P_k=P_{k+1}=...=P_{l-1}:=I$, and consider the generating function $F':\R^{2dl}\to\R$ associated to the factorization 
\[P=P_{l-1}\circ...\circ P_k\circ P_{k-1}\circ...\circ P_0.\] We denote by $h'(\ZZ,\ZZ')=\langle H'\ZZ,\ZZ'\rangle$ the Hessian bilinear form associated to $F'$. The following lemma shows that $h'$ is essentially a stabilization of $h$.

\begin{lem}\label{l:stabilization}
The inertia indices of $h$ and $h'$ are related by
\begin{align*}
\nul(h')&=\nul(h),\\
\ind(h')&=\ind(h)+d(l-k),\\
\coind(h')&=\coind(h)+d(l-k).
\end{align*}
\end{lem}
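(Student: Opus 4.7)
The plan is to reduce to the case $l=k+1$ by an obvious induction (adding one trivial factor at a time), and then to exhibit an explicit linear change of variables on $\R^{2d(k+1)}$ that realizes $h'$ as congruent to the direct sum $h \oplus G$, where $G$ is a fixed nondegenerate quadratic form on $\R^{2d}$ with inertia $(d,0,d)$. Sylvester's law of inertia then immediately gives all three equalities.

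For the case $l=k+1$, the guiding observation is that at a critical point of $F'$ the new variable $Z_k$ must satisfy $Z_0 = Z_{k+1} = P_k Z_k = Z_k$. This motivates the substitution $Z_k = Z_0 + W$, with $W=(U,V)\in\R^{2d}$. Under this substitution, all summands of $F'$ coincide with those of $F$ except the $j=k-1$ summand (in which $X_k$ appears in place of the cyclic wrap $X_0$) and the brand-new $j=k$ summand $\langle Y_k, X_0 - X_k\rangle$ (recall $f_k\equiv 0$). A direct expansion yields
\begin{align*}
F'(Z_0,\ldots,Z_{k-1},Z_0+W) = F(\ZZ) + \langle L(\ZZ),U\rangle + \tfrac12\langle A_{k-1}U,U\rangle - \langle V,U\rangle,
\end{align*}
where $L(\ZZ):=Y_{k-1}-Y_0 + A_{k-1}X_0 + B_{k-1}^T Y_{k-1}$. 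One recognizes $L(\ZZ)$ as precisely the $X_0$-component of $H\ZZ$ from \eqref{e:Hessian_at_arbitrary_critical_point}, but for the present argument only its linearity in $\ZZ$ matters. I would then perform the further substitution $V \mapsto V + L(\ZZ)$, which is a linear automorphism of $\R^{2d(k+1)}$ (it merely shears the last $d$ coordinates by a linear function of the others). The cross term cancels identically, leaving
\begin{align*}
F'(\cdots) = F(\ZZ) + G(U,V), \qquad G(U,V) = \tfrac12\langle A_{k-1}U,U\rangle - \langle V,U\rangle.
\end{align*}

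It remains to verify that $\ind(G)=\coind(G)=d$ and $\nul(G)=0$. The matrix of $G$ is $\bigl(\begin{smallmatrix} A_{k-1} & -I \\ -I & 0\end{smallmatrix}\bigr)$, and a final triangular substitution $V\mapsto V + \tfrac12 A_{k-1}U$ reduces $G$ to the form $-\langle V,U\rangle$, whose Hessian has eigenvalues $\pm 1/2$ each of multiplicity $d$. Combined with the previous step, this shows $h' \sim h \oplus G$ as symmetric bilinear forms, so the inertia indices add as claimed. The main obstacle is not conceptual but organizational: one has to carry out the expansion of $F'(Z_0,\ldots,Z_{k-1},Z_0+W)$ carefully enough to recognize that the coefficient of $U$ is exactly the linear functional $L(\ZZ)$, because it is precisely this identification that makes the decoupling substitution $V\mapsto V+L(\ZZ)$ work. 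Once that recognition is made, the remainder is elementary linear algebra, and the induction on $l-k$ completes the proof.
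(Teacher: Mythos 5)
Your proof is correct, and it takes a genuinely different route from the paper. The paper fixes a subspace $\VV\subset\R^{2dl}$ defined by $Z_k=\dots=Z_{l-1}=Z_0$, identifies $h'|_{\VV\times\VV}$ with $h$, computes $\VV^{h'}=\pi^{-1}(\ker h)$, and then invokes the general Proposition on the index of a restricted form together with an explicit exhibition of maximal definite subspaces of $h'|_{\VV^{h'}\times\VV^{h'}}$. You instead reduce to $l=k+1$ and display an explicit triangular change of variables ($Z_k=Z_0+W$, then $V\mapsto V+L(\ZZ)$, then $V\mapsto V+\tfrac12 A_{k-1}U$) that realizes $h'$ as congruent to the block direct sum $h\oplus G$, with $G$ a fixed hyperbolic form of inertia $(d,0,d)$; Sylvester's law then finishes it. I checked your expansion of $F'(\ZZ,Z_0+W)-F(\ZZ)$ and the identification of the coefficient of $U$ with $L(\ZZ)=Y_{k-1}-Y_0+A_{k-1}X_0+B_{k-1}^TY_{k-1}$: both are right, and the two shears are linear automorphisms of $\R^{2d(k+1)}$, so the congruence is legitimate. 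What your route buys is a shorter, more elementary and self-contained argument that avoids Proposition~\ref{p:index_restricted_form} entirely; what the paper's route buys is conceptual uniformity, since the same $\VV/\VV^{h}$ decomposition machinery reappears throughout Section~\ref{s:Bott} and Section~\ref{s:Maslov_index}, and its proof handles all $l>k$ in one step without induction. One harmless slip: after reducing $G$ to $-\langle V,U\rangle$, the eigenvalues of the associated Hessian matrix (in the paper's normalization $G=\tfrac12\langle H_G W,W\rangle$) are $\pm1$, not $\pm1/2$; this does not affect the inertia count $(d,0,d)$.
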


\begin{proof}
We already know the claim about the nullities, so let us focus on the other two. Consider the vector space
\begin{align*}
\VV=\big\{\ZZ\in\R^{2dl}\ \big|\ Z_{k}=Z_{k+1}=...=Z_{l-1}=Z_0\big\}.
\end{align*}
Let $\pi:\R^{2dl}\to\R^{2dk}$ be the projection
\[\pi(X_0,Y_0,...,X_{l-1},Y_{l-1})=(X_k,Y_0,X_1,Y_1,...,X_{k-1},Y_{k-1}),\]
and  $\iota:\R^{2dk}\to\VV$  the isomorphism 
\[\iota(Z_0,...,Z_{k-1})=\iota(Z_0,...,Z_{k-1},Z_0,Z_0,...,Z_0).\] Notice that 
\begin{align}\label{e:restriction_to_VV}
h'(\ZZ,\iota(\ZZ'))=h(\pi(\ZZ),\ZZ'),\qquad\forall \ZZ\in\R^{2dl},\ZZ'\in\R^{2dk}.
\end{align}
Since the inverse of $\iota$ is given by $\pi|_\VV$, the restriction of $h'$ to $\VV$ coincides with $h$, in the sense that $h'(\iota(\cdot),\iota(\cdot))=h$. Therefore
\begin{align*}
\ind(h'|_{\VV\times\VV})=\ind(h),\qquad
\coind(h'|_{\VV\times\VV})=\coind(h).
\end{align*}
Now, we need to study the $h'$-orthogonal to $\VV$. By~\eqref{e:restriction_to_VV}, we infer that 
\[\VV^{h'}=\pi^{-1}(\ker(h)).\] 
Namely, $\VV^{h'}$ is the vector space of the solutions $\ZZ\in\R^{2dl}$ of the linear system
\begin{align*}
X_{1} - X_{k} + B_{0} X_{1} + C_{0} Y_{0} & =0,\\
 X_{j+1} - X_{j} + B_{j} X_{j+1} + C_{j} Y_{j} & = 0, &\forall j=1,...,k-1,\\
Y_{k-1} - Y_{0} + A_{k-1} X_{k} + B_{k-1}^T Y_{k-1} & =0,\\
 Y_{j-1} - Y_{j} + A_{j-1} X_j + B_{j-1}^T Y_{j-1} & =0, &\forall j=1,...,k-1.
\end{align*}
This means that
\begin{align*}
\VV^{h'}=\left\{(Z_0,...,Z_{l-1})\in\R^{2dl}\ 
\left|\ 
  \begin{array}{@{}l@{}}
    P_0(X_k,Y_0) =(X_1,Y_1) \vspace{5pt} \\ 
    P_j(X_j,Y_j) =(X_{j+1},Y_{j+1})\quad\forall j=1,...,k-2 \vspace{5pt}\\
    P_{k-1}(X_{k-1},Y_{k-1}) =(X_k,Y_0)   
  \end{array}
\right.\right\}.
\end{align*}
Notice that $\VV\cap\VV^{h'}=\ker H=\VV\cap\ker H$. By Proposition~\ref{p:index_restricted_form} we infer that
\begin{align*}
\ind(h') & = \ind(h) + \ind(h'|_{\VV^{h'}\times\VV^{h'}}),\\
\coind(h') & = \coind(h) + \coind(h'|_{\VV^{h'}\times\VV^{h'}}).
\end{align*}
In order to complete the proof, we only have to show that
\begin{align}\label{e:index_at_infinity}
\ind(h'|_{\VV^{h'}\times\VV^{h'}})=\coind(h'|_{\VV^{h'}\times\VV^{h'}})=d(l-k).
\end{align}
We prove this equality as follows. For all $\ZZ,\ZZ'\in\VV^{h'}$, we have
\begin{align*}
h'(\ZZ,\ZZ')
= &
\sum_{j=k}^{l-1}
\Big(\langle Y_j-Y_{j+1},X_{j+1}'\rangle + \langle X_{j+1}-X_j,Y_j'\rangle\Big)\\
& + \langle -Y_k + \underbrace{Y_{k-1} + A_{k-1}X_k + B_{k-1}^T Y_{k-1}}_{=Y_0},X_{k}' \rangle\\
& + \langle -X_0 + \underbrace{X_1 + B_0 X_1 + C_0 Y_0}_{=X_k},Y_0' \rangle.
\end{align*}
This expression readily implies that the symmetric bilinear form $h'|_{\VV^{h'}\times\VV^{h'}}$ is negative definite on the following vector subspace of $\VV^{h'}$
\begin{align*}
\EE^-
=
\left\{(X_0,0,...,0,Y_k,X_{k+1},Y_{k+1},...,X_{l-1},Y_{l-1})\in\R^{2dl}\ 
\left|\ 
  \begin{array}{@{}r@{}}
    X_{j+1}=Y_{j+1}-Y_j \vspace{5pt} \\ 
    \forall j=k,...,l-1  
  \end{array}
\right.\right\}.
\end{align*}
Analogously, $h'|_{\VV^{h'}\times\VV^{h'}}$ is positive definite on the following vector subspace of $\VV^{h'}$
\begin{align*}
\EE^+
=
\left\{(X_0,0,...,0,Y_k,X_{k+1},Y_{k+1},...,X_{l-1},Y_{l-1})\in\R^{2dl}\ 
\left|\ 
  \begin{array}{@{}r@{}}
    X_{j+1}=Y_j-Y_{j+1} \vspace{5pt} \\ 
    \forall j=k,...,l-1  
  \end{array}
\right.\right\}.
\end{align*}
Notice that $\dim\EE^-=\dim\EE^+=d(l-k)$, and obviously the intersection of  $\EE^-$ with $\EE^+$ is trivial. Notice further that the kernel of $h'|_{\VV^{h'}\times\VV^{h'}}$ is given by
\begin{align*}
\ker(h'|_{\VV^{h'}\times\VV^{h'}})=
\left\{(Z_0,...,Z_{l-1})\in\VV^{h'}\ 
\left|\ 
  \begin{array}{@{}l@{}}
    X_0=X_k=X_{k+1}=...=X_{l-1} \vspace{5pt} \\ 
    Y_0=Y_k=Y_{k+1}=...=Y_{l-1}
  \end{array}
\right.\right\},
\end{align*}
whose dimension is $\nul(h'|_{\VV^{h'}\times\VV^{h'}})=\dim\ker(P-I)$. Finally, we remark that there is an isomorphism 
\[\Psi:\VV^{h'}\to\ker(P-I)\times\R^{2d(l-k)}\] given by
\begin{align*}
\Psi(Z_0,...,Z_{l-1})=
\big((X_k,Y_0),(X_0,Y_k,X_{k+1},Y_{k+1},X_{k+2},Y_{k+2},...,X_{l-1},Y_{l-1})\big).
\end{align*}
In particular 
\begin{align*}
\dim\VV^{h'}=\dim\ker(P-I)+2d(l-k)=\nul(h'|_{\VV^{h'}\times\VV^{h'}}) + \dim(\EE^-) + \dim(\EE^+).
\end{align*}
Therefore $\EE^-$ and $\EE^+$ are maximal vector subspaces of $\VV^{h'}$ where the bilinear form $h'|_{\VV^{h'}\times\VV^{h'}}$ is negative definite and positive definite respectively. This implies our claim in~\eqref{e:index_at_infinity}.
\end{proof}

\subsection{Morse and Maslov indices}
We now have all the ingredients to introduce the main character of this section: the Maslov index. For $n=0,...,2d$, we introduce the spaces of symplectic paths
\begin{align*}
 \PP_n(2d):=\big\{ \Gamma:[0,1]\to\Sp(2d)\  \big|\ \Gamma(0)=I,\ \dim\ker(\Gamma(1)-I)=n  \big\}
\end{align*}
endowed with the $C^0$-topology. This gives a partition of the full space of symplectic paths
\begin{align*}
 \PP(2d):=\bigcup_{n=0}^{2d} \PP_n(2d).
\end{align*}
Given $\Gamma\in\PP$ with $\Gamma(1)=:P$, we choose a parameter $k$ large enough and we consider the symplectic matrices~\eqref{e:discretization_of_symplectic_path}, which give the factorization~\eqref{e:factorization_P} and the associated quadratic generating function with Hessian bilinear form $h:\R^{2dk}\times\R^{2dk}\to\R$. We define the \textbf{Maslov index} of $\Gamma$ as
\begin{align*}
\mas(\Gamma):=\ind(h) - dk\in\Z.
\end{align*}
Analogously, we defined the \textbf{Maslov coindex} of $\Gamma$ as
\begin{align*}
\comas(\Gamma):=\coind(h) - dk\in\Z.
\end{align*}
Since the inertia indices are related by $\ind(h)+\coind(h)+\nul(h)=2dk$, we have
\begin{align}\label{e:relation_Maslov_coMaslov}
\mas(\Gamma) + \comas(\Gamma) + \dim\ker(\Gamma(1)-I) = 0.
\end{align}
In particular the Maslov index is equal to minus the Maslov coindex on the subspace $\PP_0(2d)$.

\begin{war}\label{w:convention_Maslov_index}
Many authors in symplectic topology call Maslov index what we call Maslov coindex. This different convention amounts to changing the sign of the generating families. Example~\ref{ex:rotation} below can be useful to recognize the sign convention adopted in a paper. \hfill\qed
\end{war}

The next Theorem implies that these are good definitions.
\begin{thm}\label{t:Maslov_index_homotopy_invariance}
The Maslov index is a well defined function \[\mas:\PP(2d)\to\Z,\] i.e.\ $\mas(\Gamma)$ is independent of the chosen parameter $k$. Moreover, it is a lower semi-continuous function, and it is locally constant on every subspace $\PP_n(2d)$. The same properties hold for the Maslov coindex.
\end{thm}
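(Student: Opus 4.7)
The plan is to establish continuity of the Hessian index for a fixed discretization parameter $k$, and then deduce independence of $k$ by combining Lemma~\ref{l:stabilization} with a homotopy of factorizations. The well-definedness is the core difficulty; the other claims come out as by-products of the fixed-$k$ continuity.

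For fixed $k$, the factorization~\eqref{e:discretization_of_symplectic_path} depends continuously on $\Gamma\in\PP(2d)$ in the $C^{0}$-topology, and so does the Hessian matrix $H=H_\Gamma$. By Lemma~\ref{l:properties_ind_theta}(ii) at $\theta=1$, $p=1$, we have $\nul(h_\Gamma)=\dim\ker(\Gamma(1)-I)$. Standard perturbation theory for Hermitian matrices therefore yields: (a) $\Gamma\mapsto\ind(h_\Gamma)$ and $\Gamma\mapsto\coind(h_\Gamma)$ are lower semi-continuous on the open subset of $\PP(2d)$ on which the $k$-discretization is admissible; (b) since $\nul(h_\Gamma)\equiv n$ on $\PP_{n}$, no eigenvalue of $H$ can cross zero as $\Gamma$ varies within $\PP_{n}$, so both $\ind(h_\Gamma)$ and $\coind(h_\Gamma)$ are locally constant on $\PP_{n}$ intersected with this subset.

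For the independence of $k$, fix $\Gamma$ with $P:=\Gamma(1)$ and admissible parameters $k_{1}<k_{2}$. I would construct a continuous family of $k_{2}$-factorizations of $P$ by choosing midpoints $0=t_{0}(s)\le t_{1}(s)\le\cdots\le t_{k_{2}}(s)=1$, $s\in[0,1]$, satisfying
\[
t_{j}(0)=\min\{j/k_{1},1\}\qquad\text{and}\qquad t_{j}(1)=j/k_{2},
\]
for instance by linear interpolation, and setting $R_{j}(s):=\Gamma(t_{j+1}(s))\,\Gamma(t_{j}(s))^{-1}$. Each spacing $t_{j+1}(s)-t_{j}(s)$ is a convex combination of values bounded by $1/k_{1}$, so every $R_{j}(s)$ stays in the neighborhood of $I$ where a quadratic generating function exists. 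At $s=0$ the factorization reduces to the padded $k_{1}$-discretization $(P_{0},\dots,P_{k_{1}-1},I,\dots,I)$, while at $s=1$ it is the $k_{2}$-discretization of $\Gamma$. Since the total product is always $P$, the nullity of the associated Hessian is constant in $s$, and by step~(b) (at parameter $k_{2}$) the Hessian index $\ind(h^{(s)})$ is constant along the homotopy. Comparing $s=0$ with the $k_{1}$-discretization via Lemma~\ref{l:stabilization} gives $\ind(h^{(0)})=\ind(h_{\Gamma,k_{1}})+d(k_{2}-k_{1})$, so that the definition $\ind(h)-dk$ takes the same value for $k=k_{1}$ and $k=k_{2}$.

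Finally, lower semi-continuity of $\mas$ on $\PP(2d)$ and local constancy on each $\PP_{n}$ follow directly from (a) and (b), since near any $\Gamma_{0}$ one can choose a single $k$ valid on an entire $C^{0}$-neighborhood of $\Gamma_{0}$, and the additive constant $dk$ then drops out. The statements for $\comas$ are proved identically, with $\coind$ replacing $\ind$. The main technical point likely to require care is verifying admissibility of the whole family $\{R_{j}(s)\}$, which is controlled by the uniform spacing bound $t_{j+1}(s)-t_{j}(s)\le 1/k_{1}$ noted above; everything else reduces to eigenvalue perturbation and a direct application of Lemma~\ref{l:stabilization}.
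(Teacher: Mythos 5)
Your proposal is correct and follows essentially the same strategy as the paper: both deform the padded $k_1$-discretization into the $k_2$-discretization through a continuous family of factorizations of the fixed endpoint $\Gamma(1)$, observe that the nullity of the Hessian stays constant along this family (so the index cannot jump), and then invoke Lemma~\ref{l:stabilization} to remove the padding. Your linear interpolation $t_j(s)=(1-s)\min\{j/k_1,1\}+s\,j/k_2$ and the paper's reparametrization $\Gamma_s(t)=\Gamma\bigl(\min\{1,\tfrac{l}{l+s(k-l)}t\}\bigr)$ differ only cosmetically, and your observation (b) that constant nullity plus lower semi-continuity of both index and coindex forces local constancy is exactly the mechanism the paper uses.
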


\begin{proof}
In order to show that the Maslov index and coindex are well defined, we only have to prove that we obtain the same indices if we replace $k$ by a larger parameter $l$ in the setting above. We proceed as follows. We define a homotopy $\Gamma_s:[0,1]\to\Sp(2d)$, for $s\in[0,1]$, such that $\Gamma_0=\Gamma$, each $\Gamma_s$ has the same endpoints as $\Gamma$, and $\Gamma_1$ runs along the whole $\Gamma$ in the time interval $[0,k/l]$, and stays constant at $\Gamma(1)$ in the remaining time interval $[k/l,1]$. This homotopy is defined by the formula
\begin{align*}
 \Gamma_s(t):=\Gamma\big(\min\big\{1,\tfrac{l}{l+s(k-l)}t\big\}\big).
\end{align*}
For each $s\in[0,1]$, we introduce the factorization 
\begin{align}\label{e:factorization_of_homotopy}
\Gamma(1)=P_{l-1,s}\circ P_{l-2,s}\circ...\circ P_{0,s}, 
\end{align}
where
\begin{align*}
 P_{j,s}:=\Gamma_s(\tfrac{j+1}{l})\Gamma_s(\tfrac{j}{l})^{-1},\qquad\forall j=0,...,l-1.
\end{align*}
Since the parameter $l$ is larger than $k$, each symplectic matrix $P_{j,s}$ is sufficiently close to the identity to be described by a quadratic generating function. We denote by $h_s:\R^{2dl}\times\R^{2dl}\to\R$ the Hessian bilinear form of the quadratic generating family associated to the factorization~\eqref{e:factorization_of_homotopy}. For $s=1$, equation~\eqref{e:factorization_of_homotopy} gives the factorization of $\Gamma(1)$ corresponding to the parameter $l$. Hence, all we need to do is to prove that
\begin{align*}
\ind(h_1) - dl & = \ind(h) - dk,\\
\coind(h_1) - dl & = \coind(h) - dk.
\end{align*}
Notice that $h_s$ depends continuously on $s\in[0,1]$. Moreover, its nullity is constant in $s$, since
\begin{align*}
\nul(h_s) = \dim\ker(\Gamma(1)-I),\qquad\forall s\in[0,1].
\end{align*}
This implies that the functions $s\mapsto\ind(h_s)$ and $s\mapsto\coind(h_s)$ are  constant in $s$ as well. For $s=0$, equation~\eqref{e:factorization_of_homotopy} gives the following factorization of $\Gamma(1)$
\begin{align*}
 \Gamma(1)= \underbrace{I\circ I\circ ...\circ I}_{\times l-k}\circ P_{k-1}\circ P_{k-2}\circ ...\circ P_0.
\end{align*}
By Lemma~\ref{l:stabilization}, we have
\begin{align*}
\ind(h_0)&=\ind(h)+d(l-k),\\ 
\coind(h_0)&=\coind(h)+d(l-k).
\end{align*}
Therefore
\begin{align*}
\ind(h_1) - dl & = \ind(h_0) - dl = \ind(h)+d(l-k)-dl = \ind(h)-dk,\\
\coind(h_1) - dl & = \coind(h_0) - dl = \coind(h)+d(l-k)-dl = \coind(h)-dk.
\end{align*}
This completes the proof that the Maslov index and coindex are well defined. Their lower semi-continuity follows immediately by the same property for the inertia index and coindex of symmetric bilinear forms.

Finally, let $s\mapsto\Gamma_s$, $s\in[0,1]$, be a path inside a space $\PP_n(2d)$, for some $n\in\N$. Notice that $\Gamma_s(0)=I$ and $\dim\ker(\Gamma_s(1)-I)=n$, but the path of symplectic matrices $s\mapsto\Gamma_s(1)$ does not have to be constant. For $k\in\N$ large enough, let us introduce the factorization 
\begin{align*}
\Gamma_s(1)=P_{k-1,s}\circ P_{k-2,s}\circ...\circ P_{0,s}, 
\end{align*}
where $P_{j,s}:=\Gamma_s(\tfrac{j+1}{l})\Gamma_s(\tfrac{j}{l})^{-1}$. We denote by $h_s:\R^{2dk}\times\R^{2dk}\to\R$ the Hessian bilinear form associated to this factorization of $\Gamma_s(1)$. As before, $h_s$ depends continuously on $s$, and its nullity is constantly equal to $n$. This implies that the functions $s\mapsto\ind(h_s)$ and $s\mapsto\coind(h_s)$ are constant. In particular $\mas(\Gamma_0)=\mas(\Gamma_1)$ and $\comas(\Gamma_0)=\comas(\Gamma_1)$.
\end{proof}

\subsection{Bott's iteration theory for the Maslov index}

By combining Sections~\ref{s:Bott} and~\ref{s:Maslov_index}, we obtain an iteration theory for the Maslov index and coindex. Consider a continuous path $\Gamma:[0,1]\to\Sp(2d)$ with $\Gamma(0)=I$. Fix a parameter $k$ large enough, and consider the factorization $\Gamma(1)=P_{k-1}\circ...\circ P_0$ whose factors are defined by~\eqref{e:discretization_of_symplectic_path}, and the associated quadratic generating family $F:\R^{2dk}\to\R$ given by~\eqref{e:quadratic_gf_associated_to_discretization}. For $\theta\in S^1$, let $H_\theta$ be the $\theta$-Hessian of $F$, and $h_\theta:\C^{2dk}\times\C^{2dk}\to\C$ the associated Hermitian bilinear form. We defined the \textbf{$\theta$-Maslov index} and \textbf{coindex} of $\Gamma$ as
\begin{align*}
\mas_\theta(\Gamma):=\ind(h_\theta)-dk,\qquad
\comas_\theta(\Gamma):=\coind(h_\theta)-dk,
\end{align*}
so that
\begin{align*}
\mas_\theta(\Gamma) + \comas_\theta(\Gamma) + \dim\ker(\Gamma(1)-\theta I) = 0.
\end{align*}
These indices are well defined independently of the sufficiently large parameter $k$ employed. Indeed, for $\theta=1$ these are the standard Maslov index and coindex, and the fact that they are independent of $k$ was already proved in the previous subsection. Moreover, Lemma~\ref{p:Bott_function_dependence} implies that the functions $\theta\mapsto\mas_\theta(\Gamma)-\mas_1(\Gamma)$ and $\theta\mapsto\comas_\theta(\Gamma)-\comas_1(\Gamma)$ are completely determined by the symplectic matrix $\Gamma(1)$.

Theorem~\ref{t:Maslov_index_homotopy_invariance} is generalized by the following.

\begin{thm}
The $\theta$-Maslov index $\mas_\theta:\PP(2d)\to\Z$ is a lower semi-continuous function and, for each $n\in\N$, is locally constant on the subspace 
\begin{align*}
\PP_{\theta,n}(2d) :=  
\big\{ \Gamma:[0,1]\to\Sp(2d)\  \big|\ \Gamma(0)=I,\ \dim\ker(\Gamma(1)-\theta I)=n  \big\}.
\end{align*}
The same properties hold for the Maslov coindex.
\end{thm}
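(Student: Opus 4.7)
The plan is to adapt the argument of Theorem~\ref{t:Maslov_index_homotopy_invariance} to the Hermitian $\theta$-Hessian. Since the independence of $k$ in the definition of $\mas_\theta$ and $\comas_\theta$ is already addressed in the discussion preceding the statement, only the two topological properties need verification. Fixing a path $\Gamma_0\in\PP(2d)$, my first step would be to pick $k\in\N$ large enough that, uniformly over a $C^0$-neighborhood $\mathcal{U}$ of $\Gamma_0$, every discretized factor $P_{j,\Gamma}=\Gamma(\tfrac{j+1}{k})\Gamma(\tfrac{j}{k})^{-1}$ stays within the neighborhood of the identity on which quadratic generating functions exist. By uniform continuity of $\Gamma_0$ on $[0,1]$ and $C^0$-closeness, such a $k$ works for the whole neighborhood. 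The matrix blocks $A_j(\Gamma), B_j(\Gamma), C_j(\Gamma)$ then depend continuously on $\Gamma\in\mathcal{U}$, and so do the Hermitian $\theta$-Hessian $H_{\theta,\Gamma}$ and its associated form $h_{\theta,\Gamma}$ on $\C^{2dk}$.

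Next, I would establish lower semi-continuity by continuity of eigenvalues of Hermitian matrices: under small perturbations of $\Gamma_0$, the strictly negative eigenvalues of $H_{\theta,\Gamma_0}$ cannot cross zero, so $\ind(h_{\theta,\Gamma})\geq\ind(h_{\theta,\Gamma_0})$ on a suitable neighborhood, and symmetrically for the coindex. Subtracting the constant $dk$ transfers lower semi-continuity to $\mas_\theta$ and $\comas_\theta$.

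For local constancy on $\PP_{\theta,n}(2d)$, I would invoke Lemma~\ref{l:properties_ind_theta}(ii), which identifies $\nul(h_{\theta,\Gamma})$ with $\dim_\C\ker(\Gamma(1)-\theta I)=n$ throughout $\PP_{\theta,n}(2d)$. The relation $\ind(h_{\theta,\Gamma})+\coind(h_{\theta,\Gamma})+\nul(h_{\theta,\Gamma})=2dk$ then forces $\ind+\coind$ to be constant on $\mathcal{U}\cap\PP_{\theta,n}(2d)$; two integer-valued lower semi-continuous functions with constant sum are each locally constant, and hence so are $\mas_\theta$ and $\comas_\theta$. The only delicate point in the plan is the uniform choice of $k$ in a $C^0$-neighborhood, handled by a routine compactness argument; everything else reduces to standard perturbation theory of Hermitian matrices.
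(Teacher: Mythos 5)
Your proposal is correct and follows essentially the same route as the paper: discretize uniformly over a $C^0$-neighborhood so that the Hermitian $\theta$-Hessian $h_{\theta,\Gamma}$ depends continuously on $\Gamma$, obtain lower semi-continuity of $\mas_\theta$ from that of the inertia index, and derive local constancy on $\PP_{\theta,n}(2d)$ from the fact that the nullity is pinned at $n$ there. The only cosmetic difference is that the paper verifies local constancy by noting that along a continuous path of Hermitian forms with constant nullity the eigenvalues cannot cross zero, whereas you package the same fact as ``two integer-valued lower semi-continuous functions with constant sum are each locally constant''; both are equivalent and your formulation sidesteps any appeal to local path-connectedness.
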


\begin{proof}
The proof is entirely analogous to the one of Theorem~\ref{t:Maslov_index_homotopy_invariance}. Briefly, the lower semi-continuity of the $\theta$-Maslov index follows from the same property for the index of Hermitian bilinear forms. As for the other claim, consider a path $s\mapsto\Gamma_s$ inside a subspace $\PP_{\theta,n}(2d)$. For $k$ large enough, there exists a continuous family $h_{s,\theta}:\C^{2dk}\times\C^{2dk}\to\C$ of associated $\theta$-Hessian Hermitian bilinear forms. Since 
\[\nul(h_{s,\theta})=\dim\ker(\Gamma_s(1)-\theta I)=n,\] 
we readily have that the functions $s\mapsto\ind(h_{s,\theta})$ and $s\mapsto\coind(h_{s,\theta})$ are constant, and so are the functions $s\mapsto\mas(\Gamma_s)$ and $s\mapsto\comas(\Gamma_s)$.
\end{proof}

Let us provide the motivation for the introduction of such generalized Maslov indices. We define the \textbf{$p$-th iteration} of $\Gamma$ as the continuous path $\Gamma_{p}:[0,1]\to\Sp(2d)$ given by
\begin{align*}
\Gamma_p(\tfrac{j+t}{p})=\Gamma(t)\Gamma(1)^j,\qquad\forall j=0,...,p-1,\ t\in[0,1].
\end{align*}
This notion arises naturally in the context of periodic Hamiltonian systems. Indeed, assume that $H_t:\R^{2d}\to\R$ is a smooth non-autonomous Hamiltonian that is 1-periodic in time, i.e.\ $H_{t+1}=H_t$ for all $t\in\R$. If $H$ defines a global Hamiltonian flow $\phi_t$, this verifies $\phi_{t+1}=\phi_t\circ\phi_1$ for all $t\in\R$. If now $z$ is a fixed point of $\phi_1$, we can linearize the flow at $z$, thus obtaining the symplectic path $\Gamma:\R\to\Sp(2d)$ given by $\Gamma(t)=\diff\phi_t(z)$. The $p$-th iteration of the path $\Gamma|_{[0,1]}$ is the reparametrization of the path $\Gamma|_{[0,p]}$ given by $\Gamma_p(t)=\Gamma(pt)$, for $t\in[0,1]$.

The iterated path $\Gamma_p$ defines a factorization of $\Gamma_p(1)=\Gamma(1)^p$ which is precisely the $p$-th fold juxtaposition of the original factorization $P_{k-1}\circ...\circ P_0$ of $\Gamma(1)$. This puts ourselves in the setting of Section~\ref{s:Bott}. In particular, Bott's formulae of Lemma~\ref{l:Bott_formulae} can be stated for the Maslov index and coindex as
\begin{equation}
\label{e:Bott_formulae_Maslov}
\begin{split}
\mas_\theta(\Gamma_p)&=\sum_{\mu\in\sqrt[p]\theta} \mas_\mu(\Gamma),\\
\comas_\theta(\Gamma_p)&=\sum_{\mu\in\sqrt[p]\theta} \comas_\mu(\Gamma). 
\end{split}
\end{equation}
In particular, the Maslov index and coindex of any $p$-th iterate of $\Gamma$ are completely determined by the functions $\theta\mapsto\mas_\theta(\Gamma)$ and $\theta\mapsto\comas_\theta(\Gamma)$ respectively.

\begin{exm}\label{ex:rotation}
Let us compute the Maslov index and coindex of the symplectic path $\Gamma:[0,1]\to\Sp(2)$ given by rigid rotations from angle $0$ to some angle $\beta>0$, i.e.
\begin{align*}
\Gamma(t)
=
\left(
  \begin{array}{cc}
    \cos(t\beta) & -\sin(t\beta) \\ 
    \sin(t\beta) & \cos(t\beta) \\ 
  \end{array}
\right).
\end{align*}
Let $p\in\N$ be large enough so that, for all $t\in[0,1/p]$, the symplectic matrix $\Gamma(t)$ is described by a generating function. This is verified precisely when the angle $\alpha:=\beta/p$ lies in the interval $(0,\pi/2)$. We denote by $\Upsilon:[0,1]\to\Sp(2)$ the continuous path $\Upsilon(t):=\Gamma(pt)$, so that $\Gamma$ is the $p$-th iteration of $\Upsilon$. The matrix $\Upsilon(1)$ is described by the quadratic generating function $F(z)=\tfrac12 h(z,z)=\tfrac12\langle Hz,z\rangle$ whose Hessian matrix is
\begin{align*}
H
=
\left(
  \begin{array}{cc}
    \tan(\alpha) & \cos(\alpha)^{-1}-1 \\ 
    \cos(\alpha)^{-1}-1 & \tan(\alpha) 
  \end{array}
\right).
\end{align*}
The eigenvalues $\lambda_1,\lambda_2\in\R$ of this matrix satisfy $\lambda_1\lambda_2=(1-\cos(\alpha))\cos(\alpha)^{-2}$ and $\lambda_1+\lambda_2=2\tan(\alpha)$. Therefore
\begin{align*}
\mas(\Upsilon)&=\ind(h)-1=-1,\\
\comas(\Upsilon)&=\coind(h)-1=1.
\end{align*}
In order to compute the $\theta$-Maslov indices we can make the same computation with the $\theta$-Hessian of the generating function $F$, or equivalently apply the recipe from Section~\ref{s:computation_splitting_numbers}. Let us choose the second option. For all $\theta\in S^1$, the Hermitian bilinear form $g_\theta$ associated to the symplectic matrix $P=\Upsilon(1)$ as in~\eqref{e:Hermitian_form_for_splitting_numbers} is given by
\begin{align*}
g_\theta((X,Y),(X',Y'))
=\, &
\sin(\alpha)\cos(\alpha)\langle X,X'\rangle \\
&+
\cos(\alpha)(\cos(\alpha)-\theta)\langle Y,X' \rangle\\
&+ \cos(\alpha)(\cos(\alpha)-\overline\theta)\langle X,Y' \rangle \\
&+\sin(\alpha)(2\,\mathrm{Re}(\theta)-\cos(\alpha))\langle Y,Y'\rangle.
\end{align*}
The eigenvalues $\kappa_1,\kappa_2\in\R$ of the associated Hermitian matrix 
\begin{align*}
\left(
  \begin{array}{cc}
    \sin(\alpha)\cos(\alpha) & \cos(\alpha)(\cos(\alpha)-\theta) \\ 
    \cos(\alpha)(\cos(\alpha)-\overline\theta) &  \sin(\alpha)(2\,\mathrm{Re}(\theta)-\cos(\alpha))
  \end{array}
\right).
\end{align*}
satisfy $\kappa_1+\kappa_2=2\,\mathrm{Re}(\theta)\sin(\alpha)$ and $\kappa_1\kappa_2=2\cos(\alpha)(\mathrm{Re}(\theta)-\cos(\alpha))$. In particular
\begin{align*}
\ind(g_{e^{i\alpha+}})&=\coind(g_{e^{i\alpha+}})=1,\\
\ind(g_{e^{i\alpha}})&=\ind(g_{e^{i\alpha-}})=0,\\
\coind(g_{e^{i\alpha}})&=1,\\
\coind(g_{e^{i\alpha-}})&=2.
\end{align*}
The splitting and cosplitting numbers of $\Upsilon(1)$ at the eigenvalue $e^{i\alpha}$ are given by
\begin{align*}
\SSS_{\Upsilon(1)}^+(e^{i\alpha})=\coSSS_{\Upsilon(1)}^-(e^{i\alpha})=1,\\
\SSS_{\Upsilon(1)}^-(e^{i\alpha})=\coSSS_{\Upsilon(1)}^+(e^{i\alpha})=0.
\end{align*}
Recall that 
\begin{align*}
\SSS_{\Upsilon(1)}^+(\theta) & =\mas_{\theta^+}(\Upsilon)-\mas_{\theta}(\Upsilon),
\\
\coSSS_{\Upsilon(1)}^+(\theta) & =\comas_{\theta^+}(\Upsilon)-\comas_{\theta}(\Upsilon).
\end{align*}
Therefore
\begin{align*}
\mas_\theta(\Upsilon)
&=
\left\{
  \begin{array}{p{15pt}l}
    $-1$   & \mbox{if }\arg(\theta)\in[-\alpha,\alpha], \\ 
    0   & \mbox{otherwise,} \\ 
  \end{array}
\right.\\
\comas_\theta(\Upsilon)
&=
\left\{
  \begin{array}{p{15pt}l}
    $1$   & \mbox{if }\arg(\theta)\in(-\alpha,\alpha), \\ 
    0   & \mbox{otherwise.} \\ 
  \end{array}
\right.
\end{align*}
This computation, together with Bott's formulae~\eqref{e:Bott_formulae_Maslov}, allows us to compute the Maslov index and coindex of the original path $\Gamma=\Upsilon_p$. Indeed, consider the subsets of complex $p$-th roots of unity
\begin{align*}
\II:=\{ \theta\in\sqrt[p]1 \ |\ \arg(\theta)\in[-\alpha,\alpha] \}.
\end{align*}
Its cardinality is given by
\begin{align*}
|\II|
=
2 \left\lfloor \frac{\beta}{2\pi} \right\rfloor +1.
\end{align*}
Bott's formulae for the Maslov index give
\begin{align*}
\mas(\Gamma)=\sum_{\theta\in\sqrt[p]{1}} \mas_\theta(\Upsilon) = |\II|\,\mas(\Upsilon)
=
-2 \left\lfloor \frac{\beta}{2\pi} \right\rfloor -1.
\end{align*}
If $\beta$ is not a multiple of $2\pi$, Bott's formulae for the Maslov index give
\begin{align*}
\comas(\Gamma)=\sum_{\theta\in\sqrt[p]{1}} \comas_\theta(\Upsilon) = |\II|\,\comas(\Upsilon)
=
2 \left\lfloor \frac{\beta}{2\pi} \right\rfloor + 1,
\end{align*}
whereas if $\beta$ is a multiple of $2\pi$, i.e.\ $\Gamma(1)=I$, they give
\begin{align*}
\comas(\Gamma)&=\sum_{\theta\in\sqrt[p]{1}} \comas_\theta(\Upsilon)\\
& = (|\II|-2)\,\comas(\Upsilon)+2\,\comas_{e^{i\alpha}}(\Upsilon)\\
& = 2 \left\lfloor \frac{\beta}{2\pi} \right\rfloor - 1\\
\tag*{\qed} & = \frac{\beta}{\pi}-1.
\end{align*}
\end{exm}

We conclude this section by rephrasing the iteration inequalities of Theorem~\ref{t:iteration_inequalities} in the language of the Maslov index and coindex. In this form, the theorem is due to Liu and Long \cite{Liu_Long:An_optimal_increasing_estimate_of_the_iterated_Maslov-type_indices, Liu_Long:Iteration_inequalities_of_the_Maslov-type_index_theory_with_applications}.

\begin{thm}[Iteration inequalities for the Maslov indices]
\label{t:iteration_inequalities_Maslov}
Let $\Gamma:[0,1]\to\Sp(2d)$ be a continuous path such that $\Gamma(0)=I$, and let $p\in\N$. Then
\begin{equation}\label{e:iteration_inequality_Maslov}
\begin{split}
p\,\avmas(\zz) - d & \leq \mas(\Gamma_p),\\ 
\mas(\Gamma_p)+\dim\ker(\Gamma(1)^p-I) & \leq p\,\avmas(\Gamma) + d,
\end{split}
\end{equation}
where $\avmas(\Gamma)$ denotes the \textnormal{\textbf{average Maslov index}}, given by
\begin{align*}
\avmas(\Gamma):=\frac1{2\pi}\int_{0}^{2\pi} \mas_{e^{it}}(\Gamma)\,\diff t = \lim_{p\to\infty} \frac{\mas(\Gamma^p)}{p} \in\R.
\end{align*}
If at least one of the inequalities~\eqref{e:iteration_inequality_Maslov} is an equality, then $\sigma(\Gamma(1)^p)=\{1\}$ and $\dim\ker(\Gamma(1)^p-I)\geq d$. Both inequalities are equalities if and only if $\Gamma(1)^p=I$. \hfill\qed
\end{thm}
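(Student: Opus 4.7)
The plan is to deduce Theorem~\ref{t:iteration_inequalities_Maslov} as a direct corollary of Theorem~\ref{t:iteration_inequalities}, by running the machinery of Section~\ref{s:Bott_indices} on the linear Hamiltonian diffeomorphism $\phi(z)=\Gamma(1)\,z$ of $(\R^{2d},\omega)$. This reduction is essentially bookkeeping: the definition of the Maslov index is exactly the Morse index of a discretized generating family shifted by $dk$, so each ingredient of the Maslov statement matches an ingredient already proven for the Morse index.

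First I would fix $k\in\N$ large enough that the matrices $P_j:=\Gamma(\tfrac{j+1}{k})\Gamma(\tfrac{j}{k})^{-1}$ of~\eqref{e:discretization_of_symplectic_path} are described by quadratic generating functions $f_j$. This produces the quadratic generating family $F:\R^{2dk}\to\R$ of~\eqref{e:quadratic_gf_associated_to_discretization} for $\phi$, whose critical point corresponding to the fixed point $0$ of $\phi$ is $\zz=0$; likewise $\zz^p=0$ is the critical point of the iterated generating family $F_p$ for $\phi^p$. Since $\diff\phi(0)=\Gamma(1)$, Lemma~\ref{l:properties_ind_theta}(ii) gives $\nul(\zz^p)=\dim\ker(\Gamma(1)^p-I)$, which matches the nullity term in the statement.

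Next I would translate the Morse inequalities via the definitional identities
\[\mas(\Gamma_p)=\ind(\zz^p)-dkp,\qquad \avmas(\Gamma)=\avind(\zz)-dk,\]
the second following by integrating $\mas_\theta(\Gamma)=\ind_\theta(\zz)-dk$ over the unit circle. Subtracting $dkp$ from both sides of $p\,\avind(\zz)-d\leq \ind(\zz^p)$ from Theorem~\ref{t:iteration_inequalities} gives the first inequality $p\,\avmas(\Gamma)-d\leq \mas(\Gamma_p)$, and subtracting the same constant from $\ind(\zz^p)+\nul(\zz^p)\leq p\,\avind(\zz)+d$ together with the nullity identity above gives the second. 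The asymptotic formula $\avmas(\Gamma)=\lim_{p\to\infty}\mas(\Gamma^p)/p$ transcribes~\eqref{e:average_index_as_growth_rate} after dividing by $p$, and $\avmas(\Gamma_p)=p\,\avmas(\Gamma)$ is immediate from $\avind(\zz^p)=p\,\avind(\zz)$.

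Finally, Theorem~\ref{t:iteration_inequalities} says that equality in either inequality forces $\sigma(\diff\phi(0))=\sigma(\Gamma(1))=\{1\}$, whence \emph{a fortiori} $\sigma(\Gamma(1)^p)=\{1\}$, and $\nul(\zz^p)=\dim\ker(\Gamma(1)^p-I)\geq d$; both inequalities are equalities simultaneously precisely when $\diff\phi(0)^p=\Gamma(1)^p=I$. Since this is a pure translation, there is no real obstacle; the one point that must be implicit is that $\mas(\Gamma_p)$ and $\avmas(\Gamma)$ are well defined independently of the discretization parameter $k$, which is guaranteed by Theorem~\ref{t:Maslov_index_homotopy_invariance} and the stabilization Lemma~\ref{l:stabilization}, so any sufficiently large $k$ may be used.
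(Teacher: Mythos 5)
Your proposal is correct and takes exactly the approach the paper intends: the theorem is stated with a closing \qed precisely because it is the direct translation of Theorem~\ref{t:iteration_inequalities}, applied to the linear Hamiltonian diffeomorphism $\phi(z)=\Gamma(1)z$, via the definitional shift $\mas_\theta(\Gamma)=\ind_\theta(\zz)-dk$. The bookkeeping you carry out (identifying $\nul(\zz^p)$ with $\dim\ker(\Gamma(1)^p-I)$, subtracting $dkp$ from both Morse inequalities, and transcribing the equality cases) is the intended argument, and the appeal to Theorem~\ref{t:Maslov_index_homotopy_invariance} for $k$-independence is the right way to make the reduction self-contained.
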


\begin{rem}
By~\eqref{e:relation_Maslov_coMaslov}, the inequalities~\eqref{e:iteration_inequality_Maslov} are equivalent to
\begin{align*}
p\,\avcomas(\zz) - d & \leq \comas(\Gamma_p),\\ 
\comas(\Gamma_p)+\dim\ker(\Gamma(1)^p-I) & \leq p\,\avcomas(\Gamma) + d,
\end{align*}
where $\avcomas(\Gamma)$ denotes the \textnormal{\textbf{average Maslov coindex}}, given by
\begin{align*}
\tag*{\qed}
\avcomas(\Gamma):=\frac1{2\pi}\int_{0}^{2\pi} \comas_{e^{it}}(\Gamma)\,\diff t = \lim_{p\to\infty} \frac{\comas(\Gamma^p)}{p} \in\R.
\end{align*}
\end{rem}

\subsection{Bibliographical remarks}

The Maslov index has quite a long history. It was first introduced by Gel'fand and Lidski\v{i} \cite{GelfandLidskii:On_the_structure_of_the_regions_of_stability_of_linear_canonical_systems_of_differential_equations_with_periodic_coefficients} as an index for the connected components of the space of strongly stable linear periodic Hamiltonian systems. It was later rediscovered by Maslov \cite{Maslov:Theorie_des_Perturbations_et_Methodes_Asymptotiques} as an intersection number of a loop of Lagrangian subspaces with the so called Maslov cycle, a singular hypersurface in the Lagrangian Grassmannian. Conley and Zehnder reinterpreted the Maslov index as a relative Morse index in \cite{Conley_Zehnder:Morse_type_index_theory_for_flows_and_periodic_solutions_for_Hamiltonian_equations}, and for this reason many authors in symplectic topology prefer the terminology \textbf{Conley-Zehnder index}. Our presentation of the Maslov index as a renormalized Morse index of Chaperon's generating families is analogous to Conley and Zehnder's one. This approach was already followed by Th\'eret~\cite[Chapter~IV]{Theret:Utilisation_des_fonctions_generatrices_en_geometrie_symplectique_globale} for more general generating families of Lagrangian submanifolds of cotangent bundles. Th\'eret inferred that the Maslov index is well defined (which is part of Theorem~\ref{t:Maslov_index_homotopy_invariance} above) as a consequence of  Viterbo's uniqueness Theorem for generating families \cite{Viterbo:Symplectic_topology_as_the_geometry_of_generating_functions, Theret:A_complete_proof_of_Viterbo_s_uniqueness_theorem_on_generating_functions}. An alternative proof of the relation between Maslov and Morse indices was provided by Robbin and Salamon \cite{Robbin_Salamon:Phase_functions_and_path_integrals}. In the references given so far, the Maslov index was considered only for ``non-degenerate'' paths, that is, for paths in $\PP_0(2d)$. The first author who defined the Maslov index on the whole space of symplectic paths $\PP(2d)$ was Long \cite{Long:Maslov_type_index_degenerate_critical_points_and_asymptotically_linear_Hamiltonian_systems}, who later on also defined the $\theta$-Maslov index and established its iteration theory \`a la Bott \cite{Long:Bott_formula_of_the_Maslov_type_index_theory}. Building on previous work of Conley and Zehnder, Long proved that the $\theta$-Maslov index classifies the path-connected components of the space $\PP_{\theta,0}(2d)$: two paths $\Gamma_1$ and $\Gamma_2$ belong to the same path-connected component of $\PP_{\theta,0}(2d)$ if and only if $\mas_\theta(\Gamma_1)=\mas_\theta(\Gamma_2)$ or, equivalently, $\comas_\theta(\Gamma_1)=\comas_\theta(\Gamma_2)$. We refer the reader to the monograph~\cite{Long:Index_theory_for_symplectic_paths_with_applications} for a comprehensive account of the $\theta$-Maslov index and for the many applications. A different extension of the Maslov index to degenerate paths, which is widely employed in symplectic topology, was given by Robbin and Salamon in \cite{Robbin_Salamon:The_Maslov_index_for_paths}.

\section{The Lagrangian Morse index}\label{s:Lagrangian}

\subsection{Tonelli Lagrangian and Hamiltonian systems}

In this section, we focus on a special class of Hamiltonian systems, for which the Maslov index can be described as a traditional Morse index of an action (without need of renormalization by a constant). This class can be described in the Lagrangian formulation as follows (we refer the reader to, e.g., \cite{Abraham_Marsden:Foundations_of_mechanics, Arnold:Mathematical_methods_of_classical_mechanics, Mazzucchelli:Critical_point_theory_for_Lagrangian_systems} for a comprehensive treatment of Lagrangian dynamics). Let $M$ be a manifold equipped with an auxiliary Riemannian metric. A \textbf{Tonelli Lagrangian} is a smooth time-dependent function $L_t:\Tan M\to\R$ such that $L_t=L_{t+1}$ and each function $v\mapsto L_t(q,v)$ has everywhere positive-definite Hessian and superlinear growth, i.e.
\begin{align*}
&\partial_{vv}^2 L_t(q,v)[w,w]>0,&& \forall t\in\R,\ (q,v)\in\Tan M,\ w\in\Tan_q M\setminus\{0\},\\
&\lim_{|v|_q\to\infty} L_t(q,v)/|v|_q = \infty,&& \forall t\in\R,\ q\in M.
\end{align*}
A Tonelli Lagrangian defines a second-order partial flow on $M$, that is, a flow on the tangent bundle $\Tan M$ whose integral lines are velocity vectors of curves on $M$. These curves $\gamma:(T_0,T_1)\to M$ are solution of the Euler-Lagrange equation
\begin{align*}
\tfrac{\diff}{\diff t} \partial_vL_t(\gamma(t),\dot\gamma(t))-\partial_qL_t(\gamma(t),\dot\gamma(t))=0.
\end{align*}
Let us assume for simplicity that the solutions of this equation are defined for all time. This is always true if $M$ is a closed manifold and the Lagrangian is autonomous, or more generally if its dependence on time is suitably controlled.

The fiberwise derivative $\partial_vL$ is a diffeomorphism of the tangent bundle $\Tan M$ onto the cotangent bundle $\Tan^*M$. The dual \textbf{Tonelli Hamiltonian} $H_t:\Tan^*M\to\R$ is defined by
\begin{align*}
H_t(q,p) = \max_{v\in \Tan_qM} \{pv-L_t(q,v)\}.
\end{align*}
This function still enjoys the Tonelli properties listed above: it is fiberwise convex and superlinear. Its fiberwise derivative $\partial_p H$ is the diffeomorphism inverse to $\partial_v L$, and we have
$L(q,v) + H(q,p) =pv$, where $p=\partial_vL(q,v)$ and $v=\partial_pH(q,p)$. The velocity curve $t\mapsto(\gamma(t),\dot\gamma(t))$ of a solution of the Euler-Lagrange equation is mapped by $\partial_vL$ to an integral curve $t\mapsto(\gamma(t),\partial_vL(\gamma(t),\dot\gamma(t)))$ of the Hamiltonian flow of $H$. We recall that the Hamiltonian flow $\phi_H^t$ is the integral of the non-autonomous Hamiltonian vector field $X_{H}$, which with our convention is defined by $\omega(X_{H_t},\cdot)=\diff H_t$, where $\omega=\diff q\wedge\diff p$ is the canonical symplectic form on the cotangent bundle $\Tan^*M$.

\subsection{The Lagrangian action functional}
A classical computation in calculus of variations shows that a smooth 1-periodic curve $\gamma:\R\to M$ is a solution of the Euler-Lagrange equation if and only if it is a critical point of the action functional $A:C^\infty(\R/\Z;M)\to\R$ given by
\begin{align*}
A(\gamma)=\int_0^1 L_t(\gamma(t),\dot\gamma(t))\,\diff t.
\end{align*}
We wish to investigate the properties of the Morse index $\ind(\gamma)$ of this functional at a critical point $\gamma$. For calligraphic convenience, let us assume that $M$ is the Euclidean space $\R^d$, so that the dual Hamiltonian $H$ defines a Hamiltonian flow on the standard symplectic $(\R^{2d},\omega)$. We associate to $\gamma$ the continuous path of symplectic matrices $\Gamma:[0,1]\to\Sp(2d)$ given by
\begin{align*}
\Gamma(t):=\diff\phi_H^t(\gamma(0),\partial_vL(\gamma(0),\dot\gamma(0))).
\end{align*}
Namely, $\Gamma$ is the path that begins at the identity matrix $\Gamma(0)=I$ and follows the linearized Hamiltonian flow at the starting point of the Hamiltonian periodic orbit corresponding to $\gamma$.

A priori, we do not know whether the Morse index $\ind(\gamma)$ is finite. This is a consequence of the following theorem, whose proof will be given at the end of Section~\ref{s:generating_family_Tonelli}, after several preliminaries.

\begin{thm}\label{t:Lagrangian_Morse}
The Morse index of $A$ at a critical point $\gamma$ coincides with the Maslov index of the associated symplectic path $\Gamma$, i.e.\
$\ind(\gamma)=\mas(\Gamma)$. 
\end{thm}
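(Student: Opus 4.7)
The plan is to exhibit Chaperon's generating family $F$ for the time-$1$ Hamiltonian diffeomorphism $\phi_H^1$ as a finite-dimensional variational principle whose reduction over the momenta recovers the discrete Lagrangian action on broken extremals, and then to match their Morse indices. Fix $k\in\N$ large enough so that the factorization $\phi_H^1=\phi_{k-1}\circ\cdots\circ\phi_0$, with $\phi_j:=\phi_H^{(j+1)/k}\circ(\phi_H^{j/k})^{-1}$, produces generating functions $f_j$ as in the introduction, and let $\zz\in\R^{2dk}$ be the critical point of $F$ corresponding to $\gamma$. Since the symplectic path obtained from this factorization via~\eqref{e:discretization_of_symplectic_path} is homotopic with fixed endpoints to $\Gamma$, the definition of the Maslov index gives $\mas(\Gamma)=\ind(F,\zz)-dk$. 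It therefore suffices to prove that $\ind(F,\zz)=\ind(\gamma)+dk$.

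The first step is a Hamilton--Jacobi identification of $f_j$ with short-time Lagrangian actions. For $k$ large, every pair of nearby points $(q_j,q_{j+1})$ is joined by a unique Euler--Lagrange solution on $[j/k,(j+1)/k]$; denote its action by $S_j(q_j,q_{j+1})$. A direct computation from the defining properties of $f_j$ yields
\begin{align*}
f_j(x_{j+1},y_j) = S_j(\widetilde{x}_j(x_{j+1},y_j),x_{j+1})-\langle y_j,x_{j+1}-\widetilde{x}_j(x_{j+1},y_j)\rangle,
\end{align*}
where $\widetilde{x}_j$ is implicitly defined by $y_j=-\partial_{q_j}S_j(\widetilde{x}_j,x_{j+1})$. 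Substituting into~\eqref{e:Chaperon_generating_family}, the equation $\partial_{y_j}F=0$ collapses to $\widetilde{x}_j=x_j$, which can be solved locally around $\zz$ by the implicit function theorem, producing $y_j=y_j^*(\xi)$. The reduced function $F_{\mathrm{red}}(\xi):=F(\xi,y^*(\xi))$ coincides with the discrete Lagrangian action
\begin{align*}
\mathcal{A}_k(\xi_0,\ldots,\xi_{k-1})=\sum_{j\in\Z_k}S_j(\xi_j,\xi_{j+1}).
\end{align*}

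The second step is a partial Morse lemma applied to the Hessian of $F$ at $\zz$. Differentiating the implicit relation twice gives $\partial_{y_jy_j}^2 f_j=-(\partial_{q_jq_j}^2 S_j)^{-1}$. The Tonelli hypothesis of convexity of $L_t$ in the fiber variable implies that, for $k$ large enough, $\partial_{q_jq_j}^2 S_j$ is positive definite (on short time intervals, the second variation of $S_j$ in one fixed endpoint is dominated by the kinetic contribution $\partial_{vv}^2L_t$). Hence the block $\partial_{y_jy_j}^2 F(\zz)=\partial_{y_jy_j}^2 f_j$ is negative definite. Completing the square in each $y_j\in\R^d$ exhibits a $d$-dimensional negative definite direction per $j\in\Z_k$, and the residual quadratic form on the $x$-variables equals $\mathrm{Hess}\,\mathcal{A}_k$ at the broken extremal. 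Thus the Morse index splits as $\ind(F,\zz)=dk+\ind(\mathcal{A}_k,\cdot)$.

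The final step is the classical identity $\ind(\mathcal{A}_k,\cdot)=\ind(\gamma)$ for $k$ large enough: the submanifold of $k$-broken extremals is a finite-dimensional deformation retract of a sublevel set of $A$ in the $W^{1,2}$-loop space, along which $A$ restricts to $\mathcal{A}_k$, and this retraction preserves the Morse index at $\gamma$. Assembling the three steps gives $\mas(\Gamma)=\ind(F,\zz)-dk=\ind(\mathcal{A}_k,\cdot)=\ind(\gamma)$. The principal technical obstacle is the rigorous Hamilton--Jacobi identification of $f_j$ with $S_j$, and more specifically a uniform invertibility estimate for $\partial_{q_jq_j}^2 S_j$ along the orbit needed to reduce $F$ to $\mathcal{A}_k$ as germs rather than only pointwise at $\zz$; once this is established, the Tonelli positivity makes the index bookkeeping automatic.
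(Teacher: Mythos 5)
Your proof is correct and follows the same global strategy as the paper: reduce $\mas(\Gamma)=\ind(h)-dk$ to showing that the Hessian $h$ of the generating family has exactly $dk$ ``extra'' negative directions coming from the $y$-block, with the residual form being the Hessian of the discrete Lagrangian action $\mathcal{A}_k$ on broken extremals. The papers's version of your ``completing the square'' is the abstract restricted-form machinery of Proposition~\ref{p:index_restricted_form} applied to the subspace $\WW=\{X_j=0\}$, whose $h$-orthogonal $\WW^h$ is identified with the space of broken extremals; these are equivalent maneuvers.

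Where you genuinely diverge from the paper is in the argument that the $y$-block (i.e.\ the matrices $C_j=\partial_{y_jy_j}^2f_j$) is negative definite, which is the substance of Lemma~\ref{l:concavity_generating_function}. The paper proves this by an explicit integral estimate: it uses Lemma~\ref{l:generating_function_and_hamiltonian} to write $\langle C_jY_j,Y_j\rangle=2f_j(0,Y_j)$ as a time integral involving $\alpha_t^{-1}$ and then bounds it using the fact that a linear Hamiltonian flow over a short time interval moves points very little. Your argument instead rests on the Legendre-duality identity $C_j=-(\partial^2_{q_jq_j}S_j)^{-1}$, obtained by differentiating the implicit relation $y_j=-\partial_{q_j}S_j(\widetilde x_j,x_{j+1})$, together with the short-time positivity of $\partial^2_{q_jq_j}S_j$ for Tonelli Lagrangians. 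This is arguably cleaner conceptually, but it does offload the technical burden onto the assertion that the two-point action $S_j$ exists, is smooth, and has positive definite partial Hessian on short intervals --- a standard fact, but one requiring its own uniform estimate along the orbit (you rightly flag this as the main gap to be filled). The paper's estimate sidesteps $S_j$ entirely and proves the needed negativity directly from the Hamiltonian side, at the cost of a slightly less illuminating computation. Both approaches also rely, for the final step, on the density of the broken-extremal subspaces $\EE_k$ in $W^{1,2}(\R/\Z;\R^d)$ to conclude $\ind(\mathcal{A}_k)=\ind(\gamma)$ for $k$ large; you invoke this as a black box, whereas the paper sets it up in the preceding subsection and refers to \cite[Section~4.4]{Mazzucchelli:Critical_point_theory_for_Lagrangian_systems} for details.
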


The Hessian of $A$ at $\gamma$ is the bilinear form on the infinite dimensional  Fr\'echet space $C^\infty(\R/\Z;\R^d)$ given by
\begin{align*}
\mathrm{Hess} A(\gamma)[\xi,\eta]
&=
\int_0^1
\Big( 
\langle \alpha\,\dot\xi,\dot\eta\rangle
+
\langle \beta\,\xi,\dot\eta\rangle
+
\langle \dot\xi,\beta\,\eta\rangle
+
\langle \delta\,\xi,\eta\rangle
\Big)\diff t,
\end{align*}
where
\begin{align*}
\alpha_t:=\partial_{vv}L_t(\gamma(t),\dot\gamma(t)),\quad
\beta_t:=\partial_{qv}L_t(\gamma(t),\dot\gamma(t)),\quad
\delta_t:=\partial_{qq}L_t(\gamma(t),\dot\gamma(t)).
\end{align*}
Since we are only interested in this Hessian form and not in the action functional $A$ itself, we can assume without loss of generality that the Lagrangian $L$ has the form
\begin{align}\label{e:quadratic_Lagrangian}
L_t(q,v)
=
\tfrac12
\langle \alpha_t v,v\rangle
+
\langle \beta_t q, v\rangle
+
\tfrac12
\langle \delta_t q,q\rangle,
\end{align}
and that $\gamma$ is the constant curve at origin. In this way, the Euler-Lagrange equation becomes linear of the form
\begin{align}\label{e:linear_Euler_Lagrange}
\alpha\, \ddot\xi + (\dot\alpha + \beta - \beta^T) \dot\xi + (\dot\beta-\delta)\xi =0,
\end{align}
and the action $A$ becomes a quadratic function, i.e.\ $A(\xi)=\tfrac12 \mathrm{Hess} A(\gamma)[\xi,\xi]$. From now on, we will simply write $\mathrm{Hess} A$ for $\mathrm{Hess} A(\gamma)$.

Let us extend $\mathrm{Hess} A$ as a bilinear form on the Sobolev space $W^{1,2}(\R/\Z;\R^d)$ of absolutely continuous curves with squared-integrable first derivative. One can show that the self-adjoint operator associated to this extension is Fredholm, and that the inertia index of the bilinear form is finite. Indeed, if the matrices $\beta_t$ and $\delta_t$ were identically zero, the Hessian form would clearly be semi-positive definite, since it would reduce to the integral 
\begin{align*}
\int_0^1
\langle \alpha\,\dot\xi,\dot\eta\rangle\,\diff t, 
\end{align*}
and $\alpha(t)$ is a positive definite matrix; the kernel of this bilinear form is given by the constant curves $\xi\equiv\xi(0)$, in particular it has finite dimension $d$. The general case, when $\beta_t$ and $\delta_t$ do not necessarily vanish identically, is a compact perturbation of this special one. When we add a compact perturbation to a semi-positive definite Fredholm bilinear form, the index of the resulting form is finite (see e.g.\ \cite[Lemma~2.1.2 and errata corrige]{Mazzucchelli:Critical_point_theory_for_Lagrangian_systems}). Therefore, $\mathrm{Hess} A$ has finite index. Since $C^\infty(\R/\Z;\R^d)$ is dense in $W^{1,2}(\R/\Z;\R^d)$, one can show that the Morse index of the Hessian is the same whether we consider it as a bilinear form on $C^\infty(\R/\Z;\R^d)$ or on $W^{1,2}(\R/\Z;\R^d)$.

Consider now, for each integer $k\geq2$, the vector space
\begin{align*}
\EE_k:= \big\{ \xi\in C^0(\R/\Z;\R^d)\ \big|\ \xi|_{[j/k,(j+1)/k]}\mbox{ is a solution of \eqref{e:linear_Euler_Lagrange} }\forall j\in\Z_{k} \big\}.
\end{align*}
Notice that $\EE_k$ has finite dimension $dk$, and the evaluation map 
\[\xi\mapsto(\xi(0),\xi(1/k),...,\xi((k-1)/k))\] 
is an isomorphism of $\EE_k$ onto $\R^{dk}$. Moreover $\EE_k\subset\EE_{2k}$. As $k$ increases, $\EE_k$ contains finer and finer approximations of any given smooth $1$-periodic curve.  Actually, one can show that the union of all the $\EE_k$'s is dense in the Sobolev space $W^{1,2}(\R/\Z;\R^d)$, and therefore that
\begin{align*}
\ind(\mathrm{Hess}A) = \ind(\mathrm{Hess}A|_{\EE_k\times\EE_k}),\qquad\forall k\geq2\mbox{ large enough}.
\end{align*}
We recall that the action $A$ is assumed to be a quadratic function. In particular, a curve $\xi$ is in the kernel of $\mathrm{Hess}A$ if and only if it is a critical point of $A$, that is, if and only if it is a solution of the Euler-Lagrange equation~\eqref{e:linear_Euler_Lagrange}. Therefore
\begin{align*}
 \ker(\mathrm{Hess}A) = \ker(\mathrm{Hess}A|_{\EE_k\times\EE_k}),\qquad\forall k\geq2.
\end{align*}
For more details on this, we refer the reader to~\cite[Section~4.4]{Mazzucchelli:Critical_point_theory_for_Lagrangian_systems}.

Let us have a look at the expression of the Hessian of $A$ on the space $\EE_k$. For all $\xi,\eta\in\EE_k$, we have
\begin{align*}
\mathrm{Hess} A[\xi,\eta] 
=\,& 
\int_0^1
\Big( 
\langle \alpha\, \dot\xi,\dot\eta\rangle
+
\langle \beta\, \xi,\dot\eta\rangle
+
\langle \dot\xi,\beta\, \eta\rangle
+
\langle \delta\, \xi,\eta\rangle
\Big)\,\diff t\\
=\,&
\sum_{j=0}^{k-1}
\int_{j/k}^{(j+1)/k}
\langle 
\underbrace{\big. -\alpha\, \ddot\xi - (\dot\alpha + \beta - \beta^T) \dot\xi - (\dot\beta-\delta)\xi}_{=0}
,\eta\rangle\,\diff t\\
& + \sum_{j=0}^{k-1} \langle \alpha\dot\xi + \beta\xi,\sigma\rangle \Big|_{j^+/k}^{(j+1)^-/k}\\
=\,&
\sum_{j=0}^{k-1}
\langle \alpha_{j/k}\big(\dot\xi(\tfrac{j}{k}^-)-\dot\xi(\tfrac{j}{k}^+)\big),\sigma(\tfrac{j}{k})\rangle
\end{align*}
It will be more convenient to write down this Hessian in a slightly different way as follows. We denote by $\phi_L^t$ the Euler-Lagrange flow on the tangent bundle $\Tan\R^d=\R^{2d}$, which is defined by $\phi_L^t(\xi(0),\dot\xi(0))=(\xi(t),\dot\xi(t))$ if $\xi:[0,t]\to\R^d$ is a solution of the Euler-Lagrange equation. We set
\begin{align*}
Q_j:= \phi_L^{(j+1)/k}\circ(\phi_L^{j/k})^{-1},\qquad\forall j=0,...,k-1,
\end{align*}
so that $\phi_L^{j/k}=Q_j\circ...\circ Q_0$, and we denote by $\pi_1:\R^{2d}\to\R^d$ the projection $\pi(X,V)=X$. We introduce the vector space
\begin{align*}
\VV:=
\left\{
(X_0,V_0,...,X_{k-1},V_{k-1})\in \R^{2dk}
\, \big|\ 
    \pi_1\circ Q_j (X_j,V_j) =X_{j+1}\quad\forall j\in\Z_k\right\}.
\end{align*}
Notice that there is an isomorphism $\Psi:\EE_k\to\VV$ given by 
\[\Psi(\xi)=\big(\xi(0),\dot\xi(0^+),\xi(\tfrac{1}{k}),\dot\xi(\tfrac{1}{k}^+),...,\xi(\tfrac{k-1}{k}),\dot\xi(\tfrac{k-1}{k}^+)\big).\]
If we pull-back the Hessian of the action $A$ by the isomorphism $\Psi^{-1}$, we obtain the simmetric bilinear form $h_L:\VV\times\VV\to\R$ that reads
\begin{align*}
 h_L(\ZZ,\ZZ')=\mathrm{Hess}A[\Psi^{-1}\ZZ,\Psi^{-1}\ZZ']
 =
\sum_{j\in\Z_k}
\langle \alpha_{j/k} (\tilde V_{j} - V_{j}),X_j'\rangle,
\end{align*}
where $\ZZ=(X_0,V_0,...,X_{k-1},V_{k-1})$, $\ZZ'=(X_0',V_0',...,X_{k-1}',V_{k-1}')$, and we have adopted the notation $(X_{j+1},\tilde V_{j+1})=Q_j(X_j,V_j)$. Summing up, in order to prove Theorem~\ref{t:Lagrangian_Morse}, we have to show that 
\begin{align}\label{e:Lagrangian_index_to_be_proved}
\ind(h_L)=\mas(\Gamma). 
\end{align}

\subsection{The generating family of a Tonelli Hamiltonian flow}\label{s:generating_family_Tonelli}

Let us now focus on the linear Hamiltonian flow $\phi_H^t$, which we discretize by setting
\begin{align*}
P_j:= \phi_H^{(j+1)/k}\circ(\phi_H^{j/k})^{-1},\qquad\forall j=0,...,k-1.
\end{align*}
Notice that the matrices $P_j$ are related to the matrices $Q_j$ of the previous subsection by 
\begin{align}\label{e:conjugacy_Lagrangian_Hamiltonian}
P_j\circ\partial_v L_{j/k}=\partial_v L_{(j+1)/k} \circ Q_j,
\end{align}
and $\partial_vL_t(x,v)=(x,\alpha_t v + \beta_t q)$. Since our parameter $k$ is assumed to be large enough, each symplectic matrix $P_j\in\Sp(2d)$ is close to the identity, and therefore admits a quadratic generating function
\begin{align*}
f_j(X_{j+1},Y_j)=
\tfrac12\langle A_j X_{j+1},X_{j+1}\rangle
+
\langle B_j X_{j+1},Y_j\rangle
+
\tfrac12\langle C_j Y_j,Y_j\rangle,
\end{align*}
where $A_j$, $B_j$, and $C_j$ are (small) $dk\times dk$ real matrices, $A_j$ and $C_j$ being symmetric. As we know, this means that
\begin{align*}
P_j Z_j=Z_{j+1}\qquad\mbox{if and only if}\qquad
\left\{
  \begin{array}{l}
    X_{j+1}-X_j=-B_j X_{j+1} - C_j Y_j, \\ 
    Y_{j+1}-Y_j=A_j X_{j+1}+B_j^T Y_j .
  \end{array}
\right.
\end{align*}
Let us show the precise relationship between the Hamiltonian $H$ and the generating functions $f_j$.

\begin{lem}\label{l:generating_function_and_hamiltonian}
If $(X(t),Y(t)):=\phi_H^t(X(0),Y(0))$ is an orbit of the Hamiltonian flow and we set $(X_j,Y_j):=(X(j/k),Y(j/k))$, we have
\begin{align*}
f_j(X_{j+1},Y_j)=
\langle Y_j,X_j-X_{j+1} \rangle + \int_{j/k}^{(j+1)/k} \Big( \langle Y(t),\dot X(t)\rangle -H_t(X(t),Y(t))\Big)\,\diff t.
\end{align*}
\end{lem}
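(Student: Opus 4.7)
The strategy is to recognize the right-hand side, viewed as a function of the independent variables $(X_{j+1},Y_j)$, as a primitive satisfying the same partial-derivative equations as $f_j$, and then to fix the overall additive constant (recalling that generating functions are only defined modulo constants). Since $P_j$ is close to the identity, a Hamiltonian orbit on the interval $[j/k,(j+1)/k]$ is uniquely determined by the mixed data $(X_{j+1},Y_j)$, so we may define
\[
g(X_{j+1},Y_j):=\langle Y_j,X_j-X_{j+1}\rangle+\int_{j/k}^{(j+1)/k}\Big(\langle Y(t),\dot X(t)\rangle-H_t(X(t),Y(t))\Big)\,\diff t,
\]
where $X_j=X(j/k)$ and $Y_{j+1}=Y((j+1)/k)$ are determined by the mixed data through the flow.

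The main computation is the variation of the symplectic action $S:=\int_{j/k}^{(j+1)/k}(\langle Y,\dot X\rangle-H_t(X,Y))\,\diff t$ along a family of Hamiltonian orbits. An arbitrary variation $(\delta X,\delta Y)$ of the pair of curves gives
\[
\delta S=\big[\langle Y,\delta X\rangle\big]_{j/k}^{(j+1)/k}+\int_{j/k}^{(j+1)/k}\Big(\langle \dot X-\partial_y H_t,\delta Y\rangle-\langle \dot Y+\partial_x H_t,\delta X\rangle\Big)\,\diff t,
\]
after integration by parts on the $\langle Y,\delta\dot X\rangle$ term. Since $(X,Y)$ satisfies Hamilton's equations the bulk integral vanishes identically, leaving $\delta S=\langle Y_{j+1},\delta X_{j+1}\rangle-\langle Y_j,\delta X_j\rangle$. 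Adding the variation of the boundary term $\langle Y_j,X_j-X_{j+1}\rangle$, the contributions proportional to $\delta X_j$ cancel and I obtain
\[
\delta g=\langle X_j-X_{j+1},\delta Y_j\rangle+\langle Y_{j+1}-Y_j,\delta X_{j+1}\rangle.
\]
Therefore $\partial_{Y_j}g=X_j-X_{j+1}$ and $\partial_{X_{j+1}}g=Y_{j+1}-Y_j$, which are precisely the defining relations of the generating function $f_j$ of $P_j$.

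Hence $g$ and $f_j$ differ by a constant on the domain where $P_j$ admits a generating function. To fix this constant, observe that in the case relevant to the present section the Lagrangian is quadratic, so $H_t$ is quadratic with $H_t(0,0)=0$; the trivial orbit $(X,Y)\equiv 0$ then yields $g(0,0)=0$, and we may choose the normalization of $f_j$ so that $f_j(0,0)=0$ as well. More intrinsically, the lemma may be read as a canonical normalization of the generating function associated to the Hamiltonian $H$. The only real step is the variational computation; the main subtle point is the bookkeeping of the boundary terms under the mixed boundary conditions, since $X_j$ and $Y_{j+1}$ are implicit functions of $(X_{j+1},Y_j)$, but this is resolved cleanly by the identity $\delta S=\langle Y_{j+1},\delta X_{j+1}\rangle-\langle Y_j,\delta X_j\rangle$ which absorbs all such implicit dependence into the endpoint data.
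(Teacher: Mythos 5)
Your proof is correct and reaches the same conclusion as the paper's, but the technical route is packaged differently. The paper works at the level of differential forms: it defines $g_0$ as the action, establishes the identity $\diff g_0=(\phi_H^{1/k})^* (y\,\diff x)-y\,\diff x$ via Cartan's magic formula ($\mathcal{L}_{X_{H_t}}=\diff\iota_{X_{H_t}}+\iota_{X_{H_t}}\diff$) applied to $y\,\diff x$, and then pulls this back by the change of variables $R_0:(X_1,Y_0)\mapsto(X_0,Y_0)$ to identify $\diff(g_0\circ R_0)$ with $\diff f_0-\diff(\langle Y_0,X_0-X_1\rangle)$. You instead carry out the classical first-variation computation of $S=\int(\langle Y,\dot X\rangle-H_t)\,\diff t$ with integration by parts, use Hamilton's equations (in the paper's sign convention $\dot X=\partial_Y H$, $\dot Y=-\partial_X H$) to kill the bulk term, and read off $\partial_{Y_j}g=X_j-X_{j+1}$, $\partial_{X_{j+1}}g=Y_{j+1}-Y_j$ from the surviving boundary contributions. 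These are the same computation in different languages, and both fix the additive constant by the quadratic normalization $f_j(0,0)=g(0,0)=0$. Your version is perhaps slightly more elementary in that it avoids the Lie-derivative machinery, at the small cost of having to argue explicitly (as you correctly do via the identity $\delta S=\langle Y_{j+1},\delta X_{j+1}\rangle-\langle Y_j,\delta X_j\rangle$) that the implicit dependence of $X_j$ and $Y_{j+1}$ on $(X_{j+1},Y_j)$ is absorbed by the boundary terms; the paper's pullback formalism handles this bookkeeping automatically.
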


\begin{proof}
For syntactic convenience, let us focus on the case $j=0$, the other cases being completely analogous. Consider the primitive $-y\,\diff x$ of the symplectic form $\omega=\diff x\wedge\diff y$. Since the Hamiltonian flow $\phi_H^t$ is symplectic,  $(\phi_H^t)^*y\,\diff x-y\,\diff x$ is a closed 1-form, hence exact by the Poincar\'e Lemma. Let $g_0:\R^{2d}\to\R$ be a function defined up to an additive constant by
\begin{align}\label{e:action_Hamiltonian}
\diff g_0=(\phi_H^{1/k})^*y\,\diff x-y\,\diff x. 
\end{align}
By applying the Fundamental Theorem of Calculus to the right-hand side of this equation, we obtain
\begin{align*}
\diff g_0& =(\phi_H^{1/k})^*y\,\diff x-y\,\diff x \\
& = \int_0^{1/k} (\phi_H^t)^* \mathcal{L}_{X_{H_t}} (y\,\diff x)\,\diff t\\
& = \int_0^{1/k} (\phi_H^t)^* (\diff (y\,\diff x(X_{H_t})  )-\omega(X_{H_t},\cdot))\,\diff t\\
& = \diff \left( \int_0^{1/k} (\phi_H^t)^* ( y\,\diff x(X_{H_t})  - H_t)\,\diff t \right) \\
\end{align*}
If we normalize $g_0$ by setting $g_0(0)=0$, we have 
\begin{align*}
g_0=\int_0^{1/k} (\phi_H^t)^* ( y\,\diff x(X_{H_t})  - H_t)\,\diff t.
\end{align*}
By evaluating this expression at the starting point $(X_0,Y_0)$ of our orbit, we obtain
\begin{align*}
g_0(X_0,Y_0) =  \int_0^{1/k} \Big(\langle Y(t),\dot X(t)\rangle - H_t(X(t),Y(t))\Big)\,\diff t.
\end{align*}
Notice that $g_0$ is a quadratic function.

Now, let us consider $X_1$ and $Y_0$ as independent variables, while $X_0=X_0(X_1,Y_0)$ and $Y_1=Y_1(X_1,Y_0)$. More precisely, we denote by $R_0:\R^2\to\R^2$ the linear isomorphism such that $P_0(X_0,Y_0)=(X_1,Y_1)$ if and only if $R_0(X_1,Y_0)=(X_0,Y_0)$. Equation~\eqref{e:action_Hamiltonian} becomes
\begin{align*}
\diff(g_0\circ R_0)&=R_0^*(\diff g_0)\\
&=Y_1\,\diff X_1 - Y_0\,\diff X_0\\
&=(Y_1-Y_0)\,\diff X_1 - Y_0\,(\diff X_0 -\diff X_1)\\
&=\underbrace{(Y_1-Y_0)}_{\partial_{X_1}f_0}\,\diff X_1 + \underbrace{(X_0-X_1)}_{\partial_{Y_0}f_0}\,\diff Y_0 - \diff( \langle Y_0, X_0 -X_1\rangle ) \\
&=\diff f_0 - \diff( \langle Y_0, X_0 -X_1\rangle ).
\end{align*}
This defines the generating function $f_0$ up to a constant. Since $f_0$ is a quadratic function, it vanishes at the origin, and therefore we conclude 
\[
f_0= \langle Y_0, X_0 -X_1\rangle + g_0\circ Q_0.
\qedhere
\]
\end{proof}

\begin{rem}
The above proof works with any (not necessarily linear) Hamiltonian flow, except that the functions $f_0$ and $g_0$ are not quadratic anymore and therefore can only  be defined up to an additive constant.
\end{rem}

Lemma~\ref{l:generating_function_and_hamiltonian} allows us to translate the Tonelli fiberwise convexity property of the Hamiltonian $H$ to a concavity property for the generating functions $f_j$.

\begin{lem}\label{l:concavity_generating_function}
If the parameter $k$ is large enough, each matrix $C_j$ is negative definite.
\end{lem}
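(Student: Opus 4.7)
The plan is to identify $C_j$ with a block of the linearized Hamiltonian time-$1/k$ map $P_j$ and to expand this map to first order in $\Delta t := 1/k$. Since $f_j$ is quadratic, the generating relation $X_{j+1}-X_j = -\partial_{Y_j}f_j = -B_jX_{j+1} - C_jY_j$ can be rearranged as
\[
X_j = (I+B_j)\,X_{j+1} + C_j\,Y_j,
\]
so that, fixing $X_{j+1}$ and differentiating with respect to $Y_j$, one obtains $\partial_{Y_j}X_j = C_j$. I would then write $P_j$ in block form $P_j = \bigl(\begin{smallmatrix}\mathcal{A}_j & \mathcal{B}_j \\ \mathcal{C}_j & \mathcal{D}_j\end{smallmatrix}\bigr)$, so that $X_{j+1} = \mathcal{A}_j X_j + \mathcal{B}_j Y_j$; for $k$ large, $P_j$ is close to the identity and $\mathcal{A}_j$ is invertible, yielding $X_j = \mathcal{A}_j^{-1}X_{j+1} - \mathcal{A}_j^{-1}\mathcal{B}_jY_j$ and hence
\[
C_j = -\mathcal{A}_j^{-1}\mathcal{B}_j.
\]

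The next step is to expand $P_j$ asymptotically in $\Delta t$. Since $P_j$ is the fundamental solution on $[j/k,(j+1)/k]$ of the linearized Hamilton equations $\dot Z = J\nabla^2H_t\cdot Z$, Picard iteration gives, uniformly in $j$,
\[
P_j = I + \Delta t\,J\nabla^2H_{j/k} + O(\Delta t^2),
\qquad
J\nabla^2H_t = \begin{pmatrix} \partial_{pq}H_t & \partial_{pp}H_t \\ -\partial_{qq}H_t & -\partial_{qp}H_t \end{pmatrix}.
\]
Reading off the blocks, $\mathcal{A}_j = I + O(\Delta t)$ and $\mathcal{B}_j = \Delta t\,\partial_{pp}H_{j/k} + O(\Delta t^2)$, whence
\[
C_j = -\tfrac{1}{k}\,\partial_{pp}H_{j/k} + O(1/k^2).
\]

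To conclude, I would invoke Legendre duality: $\partial_{pp}H_t$ is the inverse of the fiberwise Lagrangian Hessian $\alpha_t = \partial_{vv}L_t$, which is positive definite by the Tonelli assumption on $L$. By continuity of $t\mapsto\alpha_t$ on the compact interval $[0,1]$, $\partial_{pp}H_t$ is uniformly bounded below by a positive-definite constant matrix independent of $t$. Hence the leading term $-\tfrac{1}{k}\partial_{pp}H_{j/k}$ is uniformly negative definite of order $1/k$, dominating the $O(1/k^2)$ remainder for $k$ large enough, giving $C_j<0$ for every $j$. There is no real obstacle here; the crux is simply to recognize the identity $C_j = -\mathcal{A}_j^{-1}\mathcal{B}_j$ and push through the first-order expansion of the linearized Hamiltonian flow.
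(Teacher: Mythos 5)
Your proof is correct, and it takes a genuinely different route from the paper's. The paper proves this lemma by invoking Lemma~\ref{l:generating_function_and_hamiltonian} to rewrite $\langle C_jY_j,Y_j\rangle = 2f_j(0,Y_j)$ (choosing $X_j := C_jY_j$ so that $X_{j+1}=0$), then manipulates the resulting action integral via integration by parts and Hamilton's equations to arrive at $\int(-\langle\alpha^{-1}Y,Y\rangle + \langle(\beta^T\alpha^{-1}\beta-\delta)X,X\rangle)\,\diff t$, and finally controls this with small-time flow estimates ($|X(t)|<\epsilon|Y_{j+1}|$, $|Y(t)|>(1-\epsilon)|Y_{j+1}|$). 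You instead work directly with the block decomposition of the symplectic matrix: the identity $C_j = -\mathcal{A}_j^{-1}\mathcal{B}_j$ (which you derive correctly — it is forced by matching $X_j = (I+B_j)X_{j+1}+C_jY_j$ against $X_{j+1}=\mathcal{A}_jX_j+\mathcal{B}_jY_j$), followed by a first-order Picard expansion of the linearized flow and the Legendre identity $\partial_{pp}H_t = \alpha_t^{-1}$. Both proofs ultimately rest on the positive-definiteness of $\partial_{pp}H$, but yours pins down the precise asymptotics $C_j = -\tfrac1k\alpha_{j/k}^{-1}+O(1/k^2)$ uniformly in $j$, which makes negative-definiteness immediate once $k$ dominates the remainder. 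What the paper's route buys is that it reuses the action-integral Lemma~\ref{l:generating_function_and_hamiltonian} (developed anyway for the sequel) and stays at the level of integral estimates without a flow expansion; what yours buys is a cleaner and more explicit quantitative statement, essentially reducing the lemma to a one-line Taylor computation once the block identity for $C_j$ is recognized. One minor point worth flagging in a formal write-up: the remainder term in $C_j$ is a symmetric matrix of norm $O(1/k^2)$, so a standard eigenvalue-perturbation bound (Weyl) gives that the largest eigenvalue of $C_j$ is at most $-c/k + M/k^2 < 0$; you gesture at this but it deserves to be said explicitly.
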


\begin{proof}
Let us compute the explicit expression of our Hamiltonian $H$ dual to the quadratic Lagrangian~\eqref{e:quadratic_Lagrangian}. Given $(q,v)\in\R^{2d}$, the dual moment variable $p$ is given by
\begin{align*}
(q,p) = \partial_v L_t(q,v) = (q,\alpha_t v + \beta_t q).
\end{align*}
Therefore
\begin{align*}
H_t(q,p)&=pv-L(q,v)\\
&=\langle p,\alpha^{-1}(p-\beta_t q)\rangle - L(q,\alpha^{-1}(p-\beta_t q))\\
&=\tfrac12 \langle \alpha^{-1}p,p\rangle - \langle \alpha_t^{-1} \beta_t q,p \rangle +\tfrac12 \langle (\beta_t^T\alpha_t^{-1}\beta_t - \delta_t) q,q \rangle.
\end{align*}
Let $Y_j\in\R^{d}$ and $X_j:=C_jY_j$, so that $P_j(X_j,Y_j)=(0,Y_{j+1})$. By Lemma~\ref{l:generating_function_and_hamiltonian}, we have
\begin{align*}
\langle C_j Y_j,Y_j\rangle 
& = 2f_j(0,Y_j)\\
& =
2 \langle Y_j,X_j \rangle + 2\int_{j/k}^{(j+1)/k} \Big( \langle Y(t),\dot X(t)\rangle -H_t(X(t),Y(t))\Big)\,\diff t\\
& =
 2\int_{j/k}^{(j+1)/k} \Big( -\langle \dot Y(t),X(t)\rangle -H_t(X(t),Y(t))\Big)\,\diff t\\
& =
 2\int_{j/k}^{(j+1)/k} \Big(  \partial_q H(X(t),Y(t))\,X(t)  -H_t(X(t),Y(t))\Big)\,\diff t\\
& =
 \int_{j/k}^{(j+1)/k} \Big( -\langle\alpha_t^{-1}Y(t),Y(t)\rangle +\langle(\beta_t^T\alpha_t^{-1}\beta_t - \delta_t)X(t),X(t)\rangle \Big)\,\diff t\\
& \leq
- a \int_{j/k}^{(j+1)/k} |Y(t)|^2\,\diff t + b \int_{j/k}^{(j+1)/k} |X(t)|^2\,\diff t,
\end{align*}
where 
\begin{align*}
 a:=\min_{t\in\R/\Z} \big|\alpha_t^{-1}\big|>0,\qquad
 b:=\max_{t\in\R/\Z} \big|\beta_t^T\alpha_t^{-1}\beta_t - \delta_t\big|.
\end{align*}
We recall that the Hamiltonian flow $\phi_H^t$ is linear. For all $\epsilon>0$ there exists $k\in\N$ large enough such that, for all $t_1,t_2\in[0,1]$ with $|t_1-t_2|\leq1/k$, we have
\begin{align*}
|\phi_H^{t_1}\circ(\phi_H^{t_2})^{-1}-I|<\epsilon.
\end{align*}
In other words, if $t\mapsto Z(t)=(X(t),Y(t))$ is a non-zero integral curve of the  Hamiltonian flow $\phi_H^t$, we have
\begin{align*}
\frac{|Z(t_1)-Z(t_2)|}{|Z(t_2)|} < \epsilon,\qquad\forall t_1,t_2\in[0,1] \mbox{ with } |t_1-t_2|\leq 1/k,
\end{align*}
and if we further assume that $X(t_2)=0$, we infer
\begin{align*}
|X(t_1)|<\epsilon |Y(t_2)|,\qquad
|Y(t_1)|>(1-\epsilon) |Y(t_2)|.
\end{align*}
By plugging these inequalities into the estimate for $\langle C_jY_j,Y_j\rangle$ above, we obtain
\begin{align*}
\langle C_jY_j,Y_j\rangle \leq -a\frac{(1-\epsilon)^2 }{k} |Y_{j+1}|^2
+ b \frac{\epsilon^2}{k}|Y_{j+1}|^2
=
\underbrace{( - a(1-\epsilon)^2+b\epsilon^2)}_{(*)}
\frac{|Y_{j+1}|^2}{k},
\end{align*}
and the term $(*)$ is negative provided $\epsilon$ is small enough.
\end{proof}

\begin{proof}[Proof of Theorem~\ref{t:Lagrangian_Morse}]
Consider the quadratic generating family $F:\R^{2dk}\to\R$ associated to the factorization $\phi_H^1=P_{k-1}\circ...\circ P_0$. We recall that the Hessian bilinear form $h:\R^{2dk}\times\R^{2dk}\to\R$ of $F$ is given by
\begin{align*}
h(\ZZ,\ZZ')
= & \,
 \sum_{j\in\Z_k} \langle Y_{j-1} - Y_{j} + A_{j-1} X_j + B_{j-1}^T Y_{j-1} , X_j'\rangle \\
 & + \sum_{j\in\Z_k} \langle X_{j+1} - X_{j} + B_{j} X_{j+1} + C_{j} Y_{j},Y_j'\rangle.
\end{align*}
We wish to take advantage of the fact that the matrices $C_j$ are negative definite (Lemma~\ref{l:concavity_generating_function}) in order to compute the Morse index of $h$ and, a fortiori, the Maslov index of $\Gamma$. We introduce the vector subspace
\begin{align*}
\WW:=
\big\{
(X_0,Y_0,...,X_{k-1},Y_{k-1})\in\R^{2dk}\ \big|\ X_j=0\quad\forall j=0,...,k-1
\big\},
\end{align*}
and its $h$-orthogonal
\begin{align*}
\WW^h=
\left\{
(X_0,Y_0,...,X_{k-1},Y_{k-1})\in\R^{2dk}\ \left|\ 
\begin{array}{r}
X_{j+1} - X_{j} + B_{j} X_{j+1} + C_{j} Y_{j}=0 \vspace{5pt}\\ 
\forall j=0,...,k-1
\end{array}
\right.\right\},
\end{align*}
For all $\ZZ,\ZZ'\in\WW$, we have
\begin{align*}
h(\ZZ,\ZZ')=\sum_{j\in\Z_k} \langle C_j Y_j,Y_j'\rangle.
\end{align*}
Therefore, $h|_{\WW\times\WW}$ is a negative definite bilinear form, and in particular
\begin{align*}
\ind(h|_{\WW\times\WW})=\dim\WW=dk.
\end{align*}
The intersection $\WW\cap\WW^h$ is given by those vectors $\ZZ\in\R^{2dk}$ such that  $X_j=C_jY_j=0$ for all $j=0,...,k-1$. Since the matrices $C_j$ are negative definite, they are invertible, and therefore the intersection $\WW\cap\WW^h$ is trivial. Since $\WW\cap\ker(h)$ is contained in $\WW\cap\WW^h$, it is trivial as well. By applying Proposition~\ref{p:index_restricted_form}, we obtain
\begin{align*}
\ind(h)
=\ind(h|_{\WW\times\WW}) + \ind(h|_{\WW^h\times\WW^h}) 
= dk + \ind(h|_{\WW^h\times\WW^h}).
\end{align*}
By rephrasing in terms of the Maslov index of the path $\Gamma$, we have
\begin{align*}
\mas(\Gamma)=\ind(h)-dk =\ind(h|_{\WW^h\times\WW^h}).
\end{align*}
Let us now focus on the form $h|_{\WW^h\times\WW^h}$. We denote by $\pi_1:\R^{2d}\to\R^d$ the projection $\pi_1(X,Y)=X$. Notice that the vector space $\WW^h$ can be characterized as
\begin{align*}
\WW^h:=
\left\{
(X_0,Y_0,...,X_{k-1},Y_{k-1})\in \R^{2dk}
\, \big|\ 
    \pi_1\circ P_j (X_j,Y_j) =X_{j+1}\quad\forall j\in\Z_k\right\}.
\end{align*}
In particular, $\WW^h$ is isomorphic to the vector space $\VV$ of the previous subsection via the isomorphism $\Omega:\VV\to\WW^h$ given by
\begin{align*}
 \Omega(X_0,V_0,...,X_{k-1},V_{k-1})=(X_0,Y_0,...,X_{k-1},Y_{k-1}),
\end{align*}
where 
\[(X_j,Y_j)=\partial_vL_{j/k}(X_j,V_j)=(X_j,\alpha_{j/k}V_j + \beta_{j/k}X_j).\] 
We also set
\begin{align*}
 \tilde Y_j := Y_{j-1} + A_{j-1} X_j + B_{j-1}^T Y_{j-1},\qquad\forall j\in\Z_k,
\end{align*}
so that $P_j(X_j,Y_j)=(X_{j+1},\tilde Y_{j+1})$. We recall the notation of the previous subsection: we write $Q_j(X_j,V_j)=(X_{j+1},\tilde V_{j+1})$. Equation~\eqref{e:conjugacy_Lagrangian_Hamiltonian} implies that the vectors $\tilde V_j$ and $\tilde Y_j$ are related by the usual duality 
\[(X_j,\tilde Y_j)=\partial_vL_{j/k}(X_j,\tilde V_j)=(X_j,\alpha_{j/k}\tilde V_j + \beta_{j/k}X_j).\]
For all $\ZZ,\ZZ'\in\WW^h$, we have
\begin{align*}
h(\ZZ,\ZZ')
& = 
 \sum_{j\in\Z_k} \langle Y_{j-1} - Y_{j} + A_{j-1} X_j + B_{j-1}^T Y_{j-1} , X_j'\rangle \\
& = 
 \sum_{j\in\Z_k} \langle \tilde Y_{j} - Y_{j}, X_j'\rangle.
\end{align*}
If we pull-back $h|_{\WW^h\times\WW^h}$ by the isomorphism $\Omega$, we obtain
\begin{align*}
h(\Omega \ZZ,\Omega \ZZ')
& = \sum_{j\in\Z_k} \langle \alpha_{j/k}\tilde V_j + \beta_{j/k}X_j - \alpha_{j/k} V_j - \beta_{j/k}X_j, X_j'\rangle\\
& = \sum_{j\in\Z_k} \langle \alpha_{j/k}( \tilde V_j - V_j) , X_j'\rangle\\
& = h_L(\ZZ,\ZZ').
\end{align*}
In particular $\ind(h|_{\WW^h\times\WW^h})=\ind(h_L)$. This completes the proof of~\eqref{e:Lagrangian_index_to_be_proved}, and thus of Theorem~\ref{t:Lagrangian_Morse}. 
\end{proof}

\subsection{Bibliographical remarks}

Historically, the first statement of the kind of Theorem~\ref{t:Lagrangian_Morse} above is the Index Theorem from Riemannian geometry \cite[Section~15]{Milnor:Morse_theory}, asserting that the Morse index of a geodesic with prescribed endpoints is given by its number of conjugate points counted with multiplicity. Indeed, this count corresponds to the Maslov index of an associated path of Lagrangian subspaces. The periodic orbit case for Tonelli Lagrangian systems was first established by Duistermaat \cite{Duistermaat:On_the_Morse_index_in_variational_calculus}. The proof that we provided in this section is conceptually similar to the one given by Abbondandolo in~\cite{Abbondandolo:On_the_Morse_index_of_Lagrangian_systems}. See also~\cite{Viterbo:Intersection_de_sous_varietes_lagrangiennes_fonctionnelles_d_action_et_indice_des_systemes_hamiltoniens, An_Long:On_the_index_theories_for_second_order_Hamiltonian_systems, Abbondandolo:Morse_theory_for_Hamiltonian_systems, Long:Index_theory_for_symplectic_paths_with_applications} for other proofs and related results.

\appendix
\section{Some linear algebra}

\subsection{Eigenspaces of power matrices}

A non-diagonalizable squared complex matrix $M$ must have an eigenvalue $\lambda$ whose algebraic multiplicity is strictly larger than its geometric one. If $\lambda=0$ with algebraic multiplicity $n$, then the algebraic multiplicity of the eigenvalue $\lambda$ becomes equal to its geometric one for the power matrix $M^n$. The next proposition shows that this never occurs for non-zero eigenvalues.

\begin{prop}\label{p:power_matrix}
For every squared complex matrix $M$ we have
\begin{align*}
 \dim_\C\ker(M^n-\theta I) = \sum_{\mu\in\sqrt[n]\theta} \dim_\C\ker(M-\mu I),\qquad\forall n\in\N,\ \theta\in\C\setminus\{0\}.
\end{align*}
\end{prop}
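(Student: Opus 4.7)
The plan is to reduce to a single Jordan block via the Jordan decomposition, and then to observe that both sides of the identity count Jordan blocks of $M$ whose eigenvalue is an $n$-th root of $\theta$.

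First I would put $M$ in Jordan normal form $M = S(\bigoplus_i J_i)S^{-1}$, where $J_i$ is a Jordan block with eigenvalue $\lambda_i$. Both the kernels on the left-hand side and on the right-hand side split as direct sums over the blocks, so it suffices to prove the identity when $M = J_\lambda$ is a single Jordan block of some size $k\geq 1$ with eigenvalue $\lambda\in\C$. Write $J_\lambda = \lambda I + N$, where $N$ is the standard nilpotent shift with $N^k=0$ and $\dim_\C \ker N = 1$.

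The right-hand side is easy: $\dim_\C\ker(J_\lambda - \mu I) = \dim_\C\ker((\lambda-\mu)I + N)$, which equals $1$ if $\mu=\lambda$ (so that we are left with $N$) and $0$ otherwise. Hence the right-hand side equals the number of $\mu\in\sqrt[n]{\theta}$ with $\mu=\lambda$, which is $1$ if $\lambda^n=\theta$ and $0$ otherwise. For the left-hand side, the binomial expansion gives
\begin{align*}
J_\lambda^n - \theta I = (\lambda^n - \theta)\, I + \sum_{j=1}^{k-1}\binom{n}{j}\lambda^{n-j} N^j.
\end{align*}
If $\lambda^n\neq\theta$, this matrix is upper triangular with non-zero diagonal, hence invertible, and its kernel is trivial. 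If $\lambda^n=\theta$, I would argue that the kernel is $1$-dimensional as follows: since $\theta\neq 0$ we have $\lambda\neq 0$, so the coefficient $n\lambda^{n-1}$ of the linear term in $N$ is non-zero; thus $J_\lambda^n-\theta I = n\lambda^{n-1} N(I + N R)$ for some matrix $R$ (by factoring out $N$ from the sum), and $I + NR$ is invertible because $NR$ is nilpotent, so $\ker(J_\lambda^n - \theta I) = \ker N$, which is $1$-dimensional. In both sub-cases the left-hand side matches the right-hand side.

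The only delicate point is the last factorization step, where I use $\lambda\neq 0$ in an essential way; this is precisely the place where the hypothesis $\theta\neq 0$ enters, and it is also the reason why the identity fails for $\theta=0$ (in which case a single nilpotent Jordan block of size $k$ contributes $\min(n,k)$ to the left-hand side but only $1$ to the right-hand side). Summing the block-wise equalities over all Jordan blocks of $M$ yields the claimed identity.
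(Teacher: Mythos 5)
Your proposal is correct and follows essentially the same route as the paper: reduce via the Jordan normal form to a single Jordan block, then show that the block contributes $1$ to each side precisely when its eigenvalue is an $n$-th root of $\theta$. The only cosmetic difference is in the last computation: the paper writes out the entries of $M^n-\mu^n I$ (zero diagonal, superdiagonal $n\mu^{n-1}$) and inspects the columns, whereas you factor $J_\lambda^n-\theta I = n\lambda^{n-1}N(I+NR)$ and invoke invertibility of $I+NR$; both hinge on the same nonvanishing of $n\lambda^{n-1}$. Your closing remark explaining why the identity fails for $\theta=0$, with the explicit count $\min(n,k)$ versus $1$, is a useful observation not present in the paper but does not change the method.
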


\begin{proof}
Assume without loss of generality that $M$ is in Jordan normal form, with Jordan blocks $M_1,...,M_r$. Hence, its $n$-th power $M^n$ is a block-diagonal matrix with blocks $M_1^n,...,M_r^n$, and since
\[
\dim_\C\ker(M^n-\theta I)=\sum_{j=1}^r \dim_\C\ker(M_j^n-\theta I),
\]
it suffices to prove the proposition for the case in which $M=M_1$ is a single  Jordan block with eigenvalue $\mu\neq0$, i.e.
\[
M
=
\left(
\begin{matrix}
\mu &1 \\
& \mu & 1\\
&& \ddots & \ddots\\
&&&\mu & 1\\
&&&&\mu
\end{matrix}
\right).
\]
In this case, the claim of the proposition reduces to
\begin{align*}
\dim_\C\ker(M^n-\mu^n I) = \dim_\C\ker(M-\mu I)=1.
\end{align*}
By a straightforward computation, we can verify that the power matrix $M^n$ is still upper-triangular, where the entries in the diagonal are all equal to $\mu^n$, while the entries in the super-diagonal are all equal to $n\,\mu^{n-1}$. The matrix $M^n-\mu^nI$ is upper-triangular, with entries in the diagonal all equal to zero, and entries in the super-diagonal all equal to $n\,\mu^{n-1}$. In particular, the first column of $M^n-\mu^nI$ is the zero one, while the other columns are linearly independent. This proves that the kernel of $M^n-\mu^nI$ is one-dimensional.
\end{proof}

\subsection{Inertia of restricted Hermitian forms}\label{a:restriction_quadratic_forms}

Let $H$ be a Hermitian $d\times d$ matrix, and $h:\C^d\times\C^d\to\C$ the associated Hermitian form $h(v,w)=\langle Hv,w\rangle$. We recall the definition of the \textbf{inertia} triple of $h$: the \textbf{index} $\ind(h)$ equal to the maximal dimension of a vector subspace over which $h$ is negative definite, the \textbf{coindex} $\coind(h)=\ind(-h)$ equal to the maximal dimension of a vector subspace over which $h$ is positive definite, and the \textbf{nullity} $\nul(h)$ equal to the dimension of the kernel of $h$, that is, the kernel of the matrix $H$. Of course, in a Hermitian setting, dimension will always stand for complex dimension. If the matrix $H$ is real, the exact same results of this section hold for the real simmetric bilinear form $h|_{\R^d\times\R^d}$ by replacing complex dimension with real dimension in all the formulae (as well as in the definition of index, coindex, and nullity of $h|_{\R^d\times\R^d}$).

Given a complex vector subspace $\VV\subseteq\C^d$, its $h$-orthogonal is the complex vector subspace defined by
\begin{align*}
\VV^h=\big\{ w\in\C^d\ \big|\ h(w,v)=0\quad \forall v\in \VV \big\}.
\end{align*}
It readily follows from its definition that 
\begin{align*}
\VV^h=(H\VV)^\bot=H^{-1}(\VV^\bot), 
\end{align*}
where $\bot$ denotes the orthogonal with respect to the Hermitian inner product $\langle\cdot,\cdot\rangle$. Moreover,
\begin{align*}
(\VV^h)^h= H^{-1}H\VV = \VV + \ker(H).
\end{align*}

The inertia of $h$ is related to the one of the restricted forms $h|_{\VV\times\VV}$ and $h|_{\VV^h\times\VV^h}$ according to the following statements.

\begin{prop}\label{p:nul_restricted_form}
$\nul(h)=\nul(h|_{\VV^h\times\VV^h}) - \dim_\C(\VV\cap\VV^h) + \dim_\C(\VV\cap\ker(H))$.
\end{prop}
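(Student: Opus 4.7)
The plan is to express everything in terms of $\dim_\C \ker H$ and three intersections, then apply the Grassmann dimension formula. First I would note that $\nul(h) = \dim_\C \ker H$ by definition, and that the kernel of $h|_{\VV^h \times \VV^h}$ consists of the vectors $w \in \VV^h$ that are $h$-orthogonal to all of $\VV^h$, namely
\[\ker(h|_{\VV^h \times \VV^h}) = \VV^h \cap (\VV^h)^h.\]
Using the identity $(\VV^h)^h = \VV + \ker H$ recalled just before the statement, this rewrites as
\[\nul(h|_{\VV^h \times \VV^h}) = \dim_\C\bigl(\VV^h \cap (\VV + \ker H)\bigr).\]

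Next I would simplify the intersection $\VV^h \cap (\VV + \ker H)$. The key observation is that $\ker H \subseteq \VV^h$, since any vector annihilated by $H$ is $h$-orthogonal to every vector of $\C^d$, a fortiori to every vector of $\VV$. Consequently, if $v + k \in \VV^h$ with $v \in \VV$ and $k \in \ker H$, then $v = (v+k) - k$ also lies in $\VV^h$, so $v \in \VV \cap \VV^h$. This yields the clean splitting
\[\VV^h \cap (\VV + \ker H) = (\VV \cap \VV^h) + \ker H.\]

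Finally I would apply the Grassmann formula:
\[\dim_\C\bigl((\VV \cap \VV^h) + \ker H\bigr) = \dim_\C(\VV \cap \VV^h) + \dim_\C \ker H - \dim_\C\bigl((\VV \cap \VV^h) \cap \ker H\bigr),\]
and observe that $(\VV \cap \VV^h) \cap \ker H = \VV \cap \ker H$, once more because $\ker H \subseteq \VV^h$. Combining these three identities and rearranging gives exactly the claimed equation. There is no real obstacle here: the only conceptually subtle point is to use $(\VV^h)^h = \VV + \ker H$ rather than the naive $(\VV^h)^h = \VV$, which is what correctly encodes the possible degeneracy of $h$ and is ultimately responsible for the correction term $\dim_\C(\VV \cap \ker H)$ in the formula.
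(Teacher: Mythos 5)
Your proof is correct and rests on the same key identity as the paper's, namely $\ker(h|_{\VV^h\times\VV^h})=(\VV+\ker H)\cap\VV^h$, after which both arguments reduce to dimension counting using $\ker H\subseteq\VV^h$. The small differences are organizational and both slightly in your favor: the paper derives the kernel identity by chasing orthogonal complements directly ($\ker(h|_{\VV^h\times\VV^h})=\{v\in\VV^h\ |\ Hv\in((H\VV)^\perp)^\perp\}=(\VV+\ker H)\cap\VV^h$), whereas you invoke the cleaner abstract fact $\ker(h|_{\VV^h\times\VV^h})=\VV^h\cap(\VV^h)^h$ and then the pre-stated double-orthogonal formula $(\VV^h)^h=\VV+\ker H$; and where the paper applies the Grassmann formula to $(\VV+\ker H)\cap\VV^h$ as a three-term computation, you first simplify the intersection to the sum $(\VV\cap\VV^h)+\ker H$, which makes the final count more transparent. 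Either presentation is fine; yours isolates a bit more explicitly the role of the inclusion $\ker H\subseteq\VV^h$ and of the degeneracy correction $\dim_\C(\VV\cap\ker H)$.
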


\begin{proof}
The kernel of the Hermitian matrix associated to the restricted form $h|_{\VV^h\times\VV^h}$ is given by
\begin{align*}
\ker(h|_{\VV^h\times\VV^h})&=\big\{v\in \VV^h\ \big|\   Hv\in ((H\VV)^\bot)^\bot  \big\}\\
&=
\big\{v\in \VV^h\ \big|\   Hv\in H\VV  \big\}\\
&=
\big\{v\in \VV^h\ \big|\   v\in H^{-1}H\VV  \big\}\\
&= (\VV+\ker(H))\cap \VV^h.
\end{align*}
Notice that $\ker(H)\subset\VV^h$. Moreover
\begin{align*}
\dim_\C( (\VV+\ker(H))\cap \VV^h) &=\dim_\C(\VV+\ker(H)) + \dim_\C(\VV^h)\\
&\quad -\dim_\C(\VV + \ker(H) + \VV^h)\\
&=\dim_\C(\VV)+\dim_\C\ker(H)-\dim_\C(\VV\cap\ker(H))\\
&\quad+ \dim_\C(\VV^h) -\dim_\C(\VV + \VV^h)\\
&= \dim_\C(\VV\cap\VV^h)+\nul(h)-\dim_\C(\VV\cap\ker(H)).
\end{align*}
These two equations prove the proposition.
\end{proof}

\begin{prop}\label{p:index_restricted_form}
\begin{align*}
\ind(h) & = \ind(h|_{\VV\times\VV}) + \ind(h|_{\VV^h\times\VV^h}) + \dim_\C(\VV\cap\VV^h) - \dim_\C(\VV\cap\ker(H)),\\
\coind(h) & = \coind(h|_{\VV\times\VV}) + \coind(h|_{\VV^h\times\VV^h}) + \dim_\C(\VV\cap\VV^h) - \dim_\C(\VV\cap\ker(H)).
\end{align*}
\end{prop}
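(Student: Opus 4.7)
The plan is to deduce both formulas from the index one alone. Writing $I$ and $C$ for the right-hand sides of the two proposed identities, a direct computation combines $\ind + \coind + \nul = \dim$ for each restricted form, Proposition~\ref{p:nul_restricted_form}, the fact that $\ker(h|_{\VV\times\VV}) = \VV\cap\VV^h$, and the identity $\dim_\C\VV + \dim_\C\VV^h = d + \dim_\C(\VV\cap\ker H)$ (coming from $\VV^h = (H\VV)^\perp$) to show that $I + C = d - \nul(h) = \ind(h) + \coind(h)$. Replacing $h$ with $-h$ preserves both $\VV^h$ and $\ker H$ while swapping index and coindex, so it suffices to prove only $\ind(h) \geq I$.

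To exhibit an $h$-negative-definite subspace of dimension $I$, first pick $\UU \subseteq \VV$ and $\WW \subseteq \VV^h$ on which $h$ is negative definite and whose dimensions realize $\ind(h|_{\VV\times\VV})$ and $\ind(h|_{\VV^h\times\VV^h})$. The spaces $\UU$ and $\WW$ are $h$-orthogonal, and any vector in $\UU \cap \WW$ lies in $\VV \cap \VV^h = \ker(h|_{\VV\times\VV})$, so $h$ vanishes on it, forcing $\UU \cap \WW = 0$. Hence $h$ is negative definite on $\UU \oplus \WW$, contributing $\ind(h|_{\VV\times\VV}) + \ind(h|_{\VV^h\times\VV^h})$ to the index.

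It remains to append $c := \dim_\C(\VV\cap\VV^h) - \dim_\C(\VV\cap\ker H)$ further negative directions that are $h$-orthogonal to $\UU \oplus \WW$, via a hyperbolic correction. Let $\UU_0$ be a complement of $\VV \cap \ker H$ inside $\VV \cap \VV^h$, so $\dim_\C \UU_0 = c$. For each $v \in \UU_0$ the functional $h(v,\cdot)$ annihilates both $\VV$ and $\VV^h$, and the induced map $\UU_0 \to (\C^d/(\VV+\VV^h))^*$ is injective (if $h(v,\cdot)$ vanishes on $\C^d$ then $v \in \ker H$, hence $v=0$ in $\UU_0$); since both sides have dimension $c$ it is an isomorphism. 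Choose a complement $\PP$ of $\VV + \VV^h$ in $\C^d$ and dual bases $v_1,\ldots,v_c$ of $\UU_0$ and $u_1,\ldots,u_c$ of $\PP$ with $h(v_i,u_j) = \delta_{ij}$. Since $h|_{\UU \oplus \WW}$ is non-degenerate there is an $h$-orthogonal projection $\Pi$ onto $\UU \oplus \WW$; set $u_j' := u_j - \Pi(u_j)$, which is $h$-orthogonal to $\UU \oplus \WW$, congruent to $u_j$ modulo $\VV + \VV^h$, and still satisfies $h(v_i, u_j') = \delta_{ij}$ (because $v_i \in \VV\cap\VV^h$ is already $h$-orthogonal to $\UU\oplus\WW$). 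Finally, for a large real $\alpha > 0$ set $\tilde u_j := u_j' - \alpha v_j$; using $h(v_i,v_j) = 0$ one finds $h(\tilde u_i, \tilde u_j) = h(u_i', u_j') - 2\alpha\,\delta_{ij}$, which is negative definite for $\alpha$ large. The $\tilde u_j$ project to the basis $\{u_j\}$ of $\PP$, hence are linearly independent modulo $\UU \oplus \WW$, and they remain $h$-orthogonal to $\UU \oplus \WW$. Thus $\UU \oplus \WW \oplus \mathrm{span}_\C(\tilde u_1,\ldots,\tilde u_c)$ has dimension $I$ and $h$ is negative definite on it, proving $\ind(h) \geq I$.

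The main obstacle is the hyperbolic correction: the new vectors must simultaneously be (i) linearly independent of $\UU \oplus \WW$, (ii) $h$-orthogonal to $\UU \oplus \WW$, and (iii) mutually negatively paired. Point~(ii) uses non-degeneracy of $h|_{\UU\oplus\WW}$ to define $\Pi$, point~(i) relies on the fact that the corrections $\Pi(u_j)$ and $\alpha v_j$ lie inside $\VV + \VV^h$ so the classes modulo $\VV+\VV^h$ are preserved, and point~(iii) is the standard observation that subtracting a large multiple of the identity from a Hermitian matrix produces a negative definite form, made available by the non-degenerate pairing $h(v_i, u_j') = \delta_{ij}$.
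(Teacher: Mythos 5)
Your proof is correct, and it takes a genuinely different route from the paper's, so a brief comparison is in order.

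The paper first proves the formula assuming $H$ is invertible: it constructs a maximal negative subspace $\EE^-\oplus\FF^-\oplus\GG^-$ and a maximal positive subspace $\EE^+\oplus\FF^+\oplus\GG^+$ with $\GG^\pm=\{v\mp\lambda H^{-1}v\,:\,v\in\VV\cap\VV^h\}$ for a small real $\lambda$, then checks these two subspaces together exhaust $\C^d$. The degenerate case is handled by passing to the quotient $\C^d/\ker H$, where the induced form is non-degenerate. Your argument avoids the reduction to the non-degenerate case entirely: you observe that the two claimed identities already sum to the trivially verifiable $I+C=\ind(h)+\coind(h)$ (via Proposition~\ref{p:nul_restricted_form}, $\ker(h|_{\VV\times\VV})=\VV\cap\VV^h$, and $\dim_\C\VV+\dim_\C\VV^h=d+\dim_\C(\VV\cap\ker H)$), and that passing to $-h$ exchanges $I\leftrightarrow C$ and $\ind\leftrightarrow\coind$, so it suffices to prove $\ind(h)\geq I$. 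Both arguments hinge on the same hyperbolic correction idea for the degenerate part $\VV\cap\VV^h$ of the splitting, but the paper's explicit $H^{-1}v$ requires non-degeneracy, whereas your version pairs a complement $\UU_0$ of $\VV\cap\ker H$ in $\VV\cap\VV^h$ against a complement $\PP$ of $\VV+\VV^h$, and is insensitive to $\ker H$. The dimension count $\dim_\C(\C^d/(\VV+\VV^h))=c$ is exactly what makes this pairing non-degenerate. In short: the paper buys simplicity of the correction ($H^{-1}$ is a canonical operator) at the cost of an extra quotient step; you buy a uniform argument at the cost of a slightly more elaborate duality construction and the preliminary counting reduction. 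Both are valid; your reduction via $h\mapsto -h$ is in fact a clean observation that the paper doesn't spell out.

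One small matter of hygiene: when you verify $h(\tilde u_i,\tilde u_j)=h(u_i',u_j')-2\alpha\,\delta_{ij}$, you are implicitly using that $\alpha$ is real (so it passes through the conjugate-linear slot of $h$) and that $h(u_i',v_j)=\overline{h(v_j,u_i')}=\delta_{ij}$ by Hermitian symmetry; you state $\alpha\in\R$, so this is fine, but it is worth flagging since with complex $\alpha$ the cross terms would be $-\alpha\delta_{ij}-\overline\alpha\delta_{ij}$, which would still work, but the clean $-2\alpha$ relies on reality.
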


\begin{proof}
Since the coindex of a quadratic form is equal to the index of minus the same quadratic form, it is enough to prove the equality for the index. We first give a proof in case $h$ is a non-degenerate bilinear form, that is, in case the associated Hermitian matrix $H$ is invertible. Under this assumption, the last summand on the right-hand side of the equality that we want to prove is zero. Moreover
\begin{align*}
\VV\cap\VV^h=\ker(h|_{\VV\times\VV})=\ker(h|_{\VV^h\times\VV^h}).
\end{align*}
The restricted Hermitian form $h|_{\VV\times\VV}$ can be written as
\begin{align*}
 h|_{\VV\times\VV}(v,w)=\langle P_\VV\circ H|_{\VV} v,w\rangle,
\end{align*}
where $P_\VV:\C^d\to\VV$ is the orthogonal projector onto $\VV$, which is an Hermitian linear map. Notice that $P_\VV\circ H|_{\VV}$ is Hermitian. In particular it is diagonalizable and has only real eigenvalues. Therefore, the vector subspace $\VV$ splits as the direct sum \[\VV=\EE^-\oplus\EE^+\oplus (\VV\cap\VV^h),\] where $\EE^-$ is the direct sum of the eigenspaces of $P_\VV\circ H|_{\VV}$ corresponding to negative eigenvalues, while $\EE^+$ is the direct sum of the eigenspaces corresponding to positive eigenvalues. These three vector spaces in the direct-sum decomposition of $\VV$ are orthogonal with respect to both the Hermitian inner product $\langle\cdot,\cdot\rangle$ and the Hermitian form $h|_{\VV\times\VV}$. The inertia of this latter form is precisely
\begin{align*}
\ind(h|_{\VV\times\VV})&=\dim_\C\EE^-,\\
\coind(h|_{\VV\times\VV})&=\dim_\C\EE^+,\\
\nul(h|_{\VV\times\VV})&=\VV\cap\VV^h.
\end{align*}
Since $\EE^+$ and $\EE^-$ are invariant by the linear map $P_\VV\circ H$,  we have that $H(\EE^\pm)\subset \EE^\pm + \VV^\bot$. Let us introduce an analogous splitting \[\VV^h=\FF^+\oplus\FF^-\oplus(\VV\cap\VV^h),\] where $\FF^-$  and $\FF^+$ are the direct sum of the eigenspaces of $P_{\VV^h}\circ H|_{\VV^h}$ corresponding to the negative eigenvalues and to the positive eigenvalues respectively. As before, we have
\begin{align*}
\ind(h|_{\VV^h\times\VV^h})&=\dim_\C\FF^-,\\
\coind(h|_{\VV^h\times\VV^h})&=\dim_\C\FF^+,\\
\nul(h|_{\VV^h\times\VV^h})&=\VV\cap\VV^h.
\end{align*}
Notice that the vector subspaces $\EE^-$ and $\FF^-$ are $h$-orthogonal, and in particular the form $h$ is negative definite on the subspace $\EE^-\oplus\FF^-$. The analogous consideration holds for the vector subspaces $\EE^+$ and $\FF^+$. The matrix $H$ maps the intersection $\VV\cap\VV^h$ isomorphically onto $\VV^\bot\cap(\VV^h)^\bot=(\VV+\VV^h)^\bot$. We fix a real constant $\lambda\in(0,2/\|H^{-1}\|)$, and we introduce the vector subspaces
\begin{align*}
\GG^- & :=\{v-\lambda H^{-1}v \ |\ v\in \VV \cap \VV^h\},\\
\GG^+ & :=\{v+\lambda H^{-1}v \ |\ v\in \VV \cap \VV^h\}.
\end{align*}
The form $h$ is negative definite on $\GG^-$. Indeed, for all $v\in \VV \cap \VV^h$,
\begin{align*}
h(v-\lambda H^{-1}v,v-\lambda H^{-1}v) &= h(v,v) -2\lambda h(H^{-1}v,v) + \lambda^2 h(H^{-1}v,H^{-1}v)\\
&= -2\lambda \|v\|^2 +\lambda^2\langle v,H^{-1}v\rangle\\
&\leq \lambda \|v\|^2 \underbrace{\big( -2 + \lambda \|H^{-1}\| \big)}_{<0}.
\end{align*}
Analogously, $h$ is positive definite on $\GG^+$. The vector spaces $\GG^\pm$ are $h$-orthogonal to $\EE^\pm\oplus\FF^\pm$, since for all $v\in \VV \cap \VV^h$ and $w\oplus z\in\EE^\pm\oplus\FF^\pm$ we have
\begin{align*}
h(w+z,v\pm\lambda H^{-1}v) & = h(w,v) +h(z,v) \pm \lambda\, h(w,H^{-1}v) \pm \lambda\, h(z,H^{-1}v)\\
& = \pm \lambda \langle w,v\rangle \pm \lambda \langle z,v\rangle\\
& = 0.
\end{align*}
We conclude that $h$ is negative definite on $\EE^-\oplus\FF^-\oplus\GG^-$ and positive definite on $\EE^+\oplus\FF^+\oplus\GG^+$. Since the direct sum of these two vector subspaces is the whole $\C^d$, we have that
\begin{equation}\label{e:restricted_index_nondegenerate_case}
\begin{split}
\ind(h) & =\dim_\C(\EE^-)+\dim_\C(\FF^-)+\dim_\C(\GG^-)\\
&= \ind(h|_{\VV\times\VV}) + \ind(h|_{\VV^h\times\VV^h}) + \dim_\C(\VV\cap\VV^h),
\end{split}
\end{equation}
which is the identity that we wanted to prove.

Let us now relax the assumption that $h$ is non-degenerate, and call $\K:=\ker(h)=\ker(H)$. The form $h$ induces a non-degenerate bilinear form $h'$ on the quotient $\C^d/\K$ simply by
\begin{align*}
h'(v+\K,w+\K)=h(v,w).
\end{align*}
Any vector subspace of $\C^d/\K$ is of the form $\VV/\K$, for some vector subspace $\VV\subseteq\C^d$, and this correspondence behaves naturally with respect to the passage to the $h$-orthogonal, i.e. 
\[(\VV/\K)^{h'}=\VV^h/\K.\] 
By applying~\eqref{e:restricted_index_nondegenerate_case} to the non-degenerate Hermitian form $h'$, we obtain
\begin{align*}
\ind(h') = \ind(h'|_{\VV/\K\times\VV/\K}) + \ind(h'|_{\VV^h/\K\times\VV^h/\K}) + \dim_\C(\VV/\K\cap\VV^h/\K).
\end{align*}
Notice that 
\begin{align*}
\ind(h')&=\ind(h),\\ 
\ind(h'|_{\VV/\K\times\VV/\K})&=\ind(h|_{\VV\times\VV}),\\
\ind(h'|_{\VV^h/\K\times\VV^h/\K})&=\ind(h|_{\VV^h\times\VV^h}).
\end{align*}
Finally, since $\K=\ker(H)\subset\VV^h$,
\begin{align*}
\dim_\C(\VV/\K\cap\VV^h/\K)=\dim_\C((\VV\cap\VV^h)/\K)=\dim_\C(\VV\cap\VV^h)-\dim_\C(\VV\cap\ker(H)).
\end{align*}
This completes the proof.
\end{proof}

\subsection{Generalized eigenspaces of symplectic matrices}

Consider the standard symplectic vector space $(\R^{2d},\omega)$. The symplectic form $\omega$ can be extended to a non-degenerate skew-Hermitian form on $\C^{2d}$ by setting
\begin{align*}
\omega(\lambda z,z')=\lambda\,\omega(z,z')=\omega(z,\overline\lambda z'),\qquad\forall z,z'\in\R^{2d},\ \lambda\in\C.
\end{align*}
We denote by $\Sp(2d,\C)$ the complex symplectic group, which is given by the $2d\times2d$ complex matrices $P$ such that 
\begin{align*}
\omega(z,z')=\omega(Pz,Pz'),\qquad\forall z,z'\in\C^{2d}.
\end{align*}
Notice that  $\Sp(2d,\C)$ contains the real symplectic group $\Sp(2d)=\Sp(2d,\R)$, which is given by the matrices as above having zero imaginary part. Given a complex symplectic matrix $P\in\Sp(2d,\C)$, we are interested in its generalized eigenspaces
\begin{align*}
\FF_\lambda := \ker(P-\lambda I)^{2d},\qquad\lambda\in\C.
\end{align*}
Notice that the generalized eigenspaces span $\C^{2d}$, i.e.
\begin{align*}
 \C^{2d} = \bigoplus_{\lambda\in\sigma(P)} \FF_\lambda.
\end{align*}

\begin{lem}\label{l:omega_orthogonal_eigenspaces}
Given a pair of eigenvalues $\lambda,\theta\in\C$ of a complex symplectic matrix $P\in\Sp(2d,\C)$ such that $\lambda\overline\theta\neq 1$, the generalized eigenspaces $\FF_\lambda$ and $\FF_\theta$ are $\omega$-orthogonal, i.e.\ $\omega(z,z')=0$ for all $z\in\FF_\lambda$ and $z'\in\FF_\theta$.
\end{lem}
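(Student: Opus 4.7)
The plan is to turn the symplectic relation $\omega(Pz, Pz') = \omega(z, z')$ into an adjoint-type identity for $P - \lambda I$, then use the hypothesis $\lambda\overline\theta \neq 1$ to invert the adjoint operator on $\FF_\theta$.

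First I would observe that, since $\omega$ extends to a skew-Hermitian form with $\omega(\mu z, z') = \mu\,\omega(z, z')$ and $\omega(z, \mu z') = \overline\mu\,\omega(z, z')$, the symplectic relation rewrites as $\omega(Pz, z') = \omega(z, P^{-1}z')$ for all $z, z' \in \C^{2d}$. Combining this with $\omega(\lambda z, z') = \omega(z, \overline\lambda z')$ yields the key identity
\begin{align*}
\omega\bigl((P - \lambda I)z,\, z'\bigr) = \omega\bigl(z,\, (P^{-1} - \overline\lambda I) z'\bigr),
\qquad \forall z, z' \in \C^{2d}.
\end{align*}
Iterating $n$ times gives $\omega((P - \lambda I)^n z, z') = \omega(z, (P^{-1} - \overline\lambda I)^n z')$.

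Next I would analyze the operator $P^{-1} - \overline\lambda I$ on the generalized eigenspace $\FF_\theta$. Since $\FF_\theta$ is $P$-invariant, it is also $P^{-1}$-invariant, and on $\FF_\theta$ the restriction of $P^{-1}$ has only $\theta^{-1}$ as an eigenvalue. Hence $(P^{-1} - \overline\lambda I)|_{\FF_\theta}$ has only the single eigenvalue $\theta^{-1} - \overline\lambda$, which is nonzero precisely when $\overline\lambda \theta \neq 1$, i.e.\ when $\lambda \overline\theta \neq 1$ (take complex conjugates). Consequently $(P^{-1} - \overline\lambda I)^{2d}$ restricts to an automorphism of $\FF_\theta$.

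To conclude, I would fix $z \in \FF_\lambda$ and $z' \in \FF_\theta$. Applying the iterated identity with $n = 2d$ gives
\begin{align*}
0 = \omega\bigl(\underbrace{(P - \lambda I)^{2d} z}_{= 0},\, z'\bigr) = \omega\bigl(z,\, (P^{-1} - \overline\lambda I)^{2d} z'\bigr).
\end{align*}
Since $z'$ ranges over $\FF_\theta$ and $(P^{-1} - \overline\lambda I)^{2d}$ is a bijection of $\FF_\theta$, the vector $(P^{-1} - \overline\lambda I)^{2d} z'$ also ranges over all of $\FF_\theta$. Thus $\omega(z, w) = 0$ for every $w \in \FF_\theta$, which proves the lemma. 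The only slightly delicate step is justifying the adjoint identity $\omega(Pz, z') = \omega(z, P^{-1}z')$ in the complex setting, but this is immediate from the definition of the skew-Hermitian extension of $\omega$, so no real obstacle is expected.
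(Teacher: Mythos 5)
Your proof is correct, but it takes a genuinely different route from the paper's. The paper proves the lemma by induction on the sum of the ranks of the two generalized eigenvectors: for the base case of rank one each, the symplectic relation gives directly $\omega(z,z')=\lambda\overline\theta\,\omega(z,z')$; in the inductive step one sets $w=(P-\lambda I)z$ and $w'=(P-\theta I)z'$, invokes the inductive hypothesis to kill $\omega(z,w')$, $\omega(w,z')$, $\omega(w,w')$, and again arrives at $\omega(z,z')=\lambda\overline\theta\,\omega(z,z')$. Your argument instead packages the whole content of the induction into the single adjoint identity $\omega\bigl((P-\lambda I)z,z'\bigr)=\omega\bigl(z,(P^{-1}-\overline\lambda I)z'\bigr)$, iterates it, and reduces the statement to the observation that $(P^{-1}-\overline\lambda I)$ is an automorphism of $\FF_\theta$ (its only eigenvalue there is $\theta^{-1}-\overline\lambda\neq 0$, using $\theta\neq 0$ because $P$ is invertible and $\lambda\overline\theta\neq1$). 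This is cleaner and more conceptual: the hypothesis $\lambda\overline\theta\neq1$ appears exactly as an invertibility condition, and there is no induction to manage. The paper's inductive argument is more hands-on but has the same elementary flavor, and neither approach uses anything beyond the definitions. One small remark: you only need $(P-\lambda I)^{n}z=0$ for $n$ equal to the size of the largest Jordan block, but taking $n=2d$ as you do is safe and avoids any bookkeeping.
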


\begin{proof}
Consider two arbitrary generalized eigenvectors $z\in\FF_\lambda$ and $z'\in\FF_\theta$. We will prove the lemma by induction on the sum of the ranks of $z$ and $z'$. If $(P-\lambda I)^n z=(P-\theta I)^mz'=0$ with $n=m=1$, we have
\begin{align*}
\omega(z,z')=\omega(Pz,Pz')=\lambda\overline\theta\,\omega(z,z'),
\end{align*}
which implies that $\omega(z,z')=0$ since $\lambda\overline\theta\neq1$. Let us make the inductive hypothesis that $\omega(z,z')=0$ holds whenever $n+ m\leq k$. 

Consider $z$ and $z'$ such that $n+ m=k+1$, and set $w:=(P-\lambda I)z$ and $w':=(P-\theta I)z'$. The generalized eigenvectors $w$ and $w'$ have rank $n-1$ and $m-1$ respectively. By the inductive hypothesis, we have 
\begin{align*}
\omega(z,w')=\omega(w,z')=\omega(w,w')=0,
\end{align*}
which implies
\begin{align*}
\omega(z,Pz') & = \overline\theta\,\omega(z,z'),\\
\omega(Pz,z') & = \lambda\,\omega(z,z'),
\end{align*}
and
\begin{align*}
\omega(Pz,Pz') &= \lambda\,\omega(z,Pz') + \overline\theta\,\omega(Pz,z') - \lambda\overline\theta\,\omega(z,z')=\lambda\overline\theta\,\omega(z,z').
\end{align*}
Since $P$ is a symplectic matrix, this latter equality becomes $\omega(z,z')=\lambda\overline\theta\,\omega(z,z')$, and as before this implies $\omega(z,z')=0$.
\end{proof}

Consider now a real symplectic matrix $P\in\Sp(2d)$, and the real generalized eigenspace
\begin{align*}
\EE_1:=\ker(P-I)^{2d}\subset\R^{2d}.
\end{align*}

\begin{lem}\label{l:EE_1_is_symplectic}
The space $\EE_1$ is a (possibly zero dimensional) symplectic vector subspace of $(\R^{2d},\omega)$.
\end{lem}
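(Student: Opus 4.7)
The plan is to show that the symplectic form $\omega$ restricts non-degenerately to $\EE_1$, and the natural way is to reduce the problem to the already-proven Lemma~\ref{l:omega_orthogonal_eigenspaces} by complexifying. First I would pass to $\C^{2d}$ and consider the complex generalized eigenspace $\FF_1=\ker(P-I)^{2d}\subset\C^{2d}$. Since $P$ is a real matrix, $\FF_1$ is the complexification of $\EE_1$, i.e.\ $\FF_1=\EE_1\otimes_\R\C$. In particular, the (skew-Hermitian extension of the) form $\omega$ restricts non-degenerately to $\EE_1$ if and only if it restricts non-degenerately to $\FF_1$, so it suffices to establish the complex statement.

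Next I would apply Lemma~\ref{l:omega_orthogonal_eigenspaces} with $\theta=1$: for every eigenvalue $\lambda\in\sigma(P)$ with $\lambda\neq 1$ we have $\lambda\overline{\theta}=\lambda\neq1$, and therefore $\omega(\FF_1,\FF_\lambda)=0$. Using the generalized eigenspace decomposition
\[
\C^{2d}=\FF_1\oplus\bigoplus_{\lambda\in\sigma(P)\setminus\{1\}}\FF_\lambda,
\]
this means that the $\omega$-orthogonal $\FF_1^\omega$ of $\FF_1$ inside $\C^{2d}$ contains $\bigoplus_{\lambda\neq1}\FF_\lambda$.

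Now suppose $z\in\FF_1$ were $\omega$-orthogonal to the whole of $\FF_1$. Combined with the previous observation, $z$ would be $\omega$-orthogonal to $\FF_1\oplus\bigoplus_{\lambda\neq1}\FF_\lambda=\C^{2d}$. Since $\omega$ is non-degenerate on $\C^{2d}$, this forces $z=0$. Hence $\omega|_{\FF_1\times\FF_1}$ is non-degenerate, and by the reduction of the first paragraph, so is $\omega|_{\EE_1\times\EE_1}$. The case $\EE_1=\{0\}$ is covered vacuously by the usual convention that the zero space is symplectic.

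I do not expect any real obstacle: once Lemma~\ref{l:omega_orthogonal_eigenspaces} is available, the only point to handle carefully is the compatibility between the real and complex pictures, namely that $\EE_1\otimes_\R\C=\FF_1$ (which follows from $P$ being real, so $(P-I)^{2d}$ has a real matrix and its kernel in $\C^{2d}$ is the complexification of the kernel in $\R^{2d}$) and that non-degeneracy of a real bilinear form is equivalent to non-degeneracy of its $\C$-bilinear/skew-Hermitian extension on the complexification.
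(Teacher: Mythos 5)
Your proof is correct and follows essentially the same route as the paper's: complexify, invoke Lemma~\ref{l:omega_orthogonal_eigenspaces} with $\theta=1$ to see that $\FF_1$ is $\omega$-orthogonal to the sum of the remaining generalized eigenspaces, deduce non-degeneracy of $\omega|_{\FF_1\times\FF_1}$ from the global non-degeneracy of $\omega$ on $\C^{2d}$, and descend to $\EE_1$ via the reality of $P$. Your write-up is slightly more explicit than the paper's in spelling out the orthogonal-complement argument and the compatibility of the real and complexified pictures, but the ideas coincide.
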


\begin{proof}
Consider the complex generalized eigenspaces of $P$, which give the direct sum decomposition $\C^{2d}=\FF_1\oplus\FF'$, where
\begin{align*}
\FF'=\bigoplus_{\lambda\neq1} \FF_\lambda.
\end{align*}
By Lemma~\ref{l:omega_orthogonal_eigenspaces}, the vector subspaces $\FF_1$ and $\FF'$ are $\omega$-orthogonal. Since $\omega$ is a non-degenerate skew-Hermitian form on $\C^{2d}$, this implies that its restriction to $\FF_1$ is non-degenerate. Since $\omega$ is a real bilinear form, its restriction to the real part of $\FF_1$ must be non-degenerate as well. But the real part of  $\FF_1$ is precisely $\EE_1$.
\end{proof}

\bibliography{_biblio}
\bibliographystyle{amsalpha}

\end{document}